\theoremstyle{plain}
\newtheorem{theorem}{Theorem}[section]
\newtheorem{lemma}{Lemma}[section]
\newtheorem{proposition}{Proposition}[section]
\newtheorem{corollary}{Corollary}[section]
\newtheorem{definition}{Definition}[section]
\newtheorem{remark}{Remark}[section]
\numberwithin{equation}{section}
\numberwithin{definition}{section}
\newtheorem*{theorem*}{Theorem}
\def\p{\partial}
\newcommand\JM{Mierczy\'nski}
\newcommand\RR{\ensuremath{\mathbb{R}}}
\newcommand\QQ{\ensuremath{\mathbb{Q}}}
\newcommand\NN{\ensuremath{\mathbb{N}}}
\newcommand{\norm}[1]{\ensuremath{\lVert#1\rVert}}
\DeclareMathOperator{\conv}{\overline{co}}
\DeclareMathOperator*{\esssup}{ess\,sup}
\newcommand{\n}[1]{(#1)}
\newcommand{\nb}[1]{\big(#1\big)}
\newcommand{\nbbb}[1]{\bigg(#1\bigg)}
\newcommand{\U}{U}
\newcommand{\dd}{\mathrm{d}}
\newcommand{\WST}{weak\nobreakdash-\hspace{0pt}\textup{*} }
\definecolor{R}{RGB}{255, 0, 0}
\definecolor{C}{RGB}{255, 0, 238}
\DeclareFontFamily{U}{mathx}{\hyphenchar\font45}
\DeclareFontShape{U}{mathx}{m}{n}{
      <5> <6> <7> <8> <9> <10>
      <10.95> <12> <14.4> <17.28> <20.74> <24.88>
      mathx10
      }{}
\DeclareSymbolFont{mathx}{U}{mathx}{m}{n}
\DeclareMathAccent{\widecheck}{0}{mathx}{"71}
\DeclareMathAccent{\wideparen}{0}{mathx}{"75}
\begin{document}

\title {Parabolic Differential Equations with Bounded Delay}

\author{\ Marek Kryspin   \ \&  Janusz Mierczy\'nski
\\
Faculty of Pure and Applied Mathematics \\
Wroc{\l}aw University of Science and Technology \\
Wybrze\.ze Wyspia\'nskiego 27 \\
PL-50-370 Wroc{\l}aw, Poland \\
}
\date{} \maketitle

\setcounter{section}{-1}
\maketitle

\begin{abstract}
We show the continuous dependence of solutions of linear nonautonomous second order parabolic partial differential equations (PDEs) with bounded delay on coefficients and delay.  The assumptions are very weak: only convergence in the \WST topology of delay coefficients is required.  The results are important in the applications of the theory of Lyapunov exponents to the investigation of PDEs with delay.
\end{abstract}

\setcounter{section}{-1}
\section{Introduction}
The purpose of the present paper is to formulate and prove results on existence and continuous dependence on parameters of solutions of linear second order partial differential equations (PDEs) of parabolic type with bounded time delay.  To be more specific, consider a rather simplified example, that is, an equation of the form
\begin{equation}
\label{eq:intro-0}
    \begin{cases}
      \displaystyle \frac{\partial u}{\partial t}(t, x) = {\Delta}u(t, x) + c_1(t, x) u(t - R(t),x), & t \in [0, T], \ x \in D
      \\[1.5ex]
      u(t, x) = 0 & t \in [0, T], \ x \in \partial D,
    \end{cases}
\end{equation}
where $D \subset \RR^N$ is a bounded domain with boundary $\partial D$, $\Delta$ is the Laplace operator in $x$, $T > 0$, $c_1 \colon (0, T) \times D \to \RR$ belongs to $L_{\infty}((0, T) \times D)$, and $R \colon [0, T] \to [0, 1]$ is a function in $L_{\infty}((0, T))$.

\medskip
The theory of Lyapunov exponents (or rather, more generally, the theory of skew\nobreakdash-\hspace{0pt}product dynamical systems) is a powerful tool in the applications of the theory of dynamical systems to the investigation of evolution equations (in a broad sense, containing but not excluded to, ordinary differential equations, parabolic partial differential equations, hyperbolic partial differential equations).  That theory requires the (linear) equation to generate a skew\nobreakdash-\hspace{0pt}product dynamical system on some bundle whose base is the closure of the set of coefficients of the original equation.

Let us consider two cases. We will remain in the simplified framework of~\eqref{eq:intro-0}.
\begin{itemize}[label=$\bullet$]
    \item
    The \emph{nonautonomous} case,
    \begin{equation}
    \label{eq:intro-nonauton}
    \begin{cases}
        \displaystyle \frac{\partial u}{\partial t}(t, x) = {\Delta}u(t, x) + c_1(t, x) u(t - R(t),x), & t > 0, \ x \in D
        \\[1.5ex]
        u(t, x) = 0 & t > 0, \ x \in \partial D,
    \end{cases}
    \end{equation}
    where $c_1$ is defined on $(- \infty, \infty) \times D$ and $R$ is defined on $(- \infty, \infty)$.  We take the closure, in an appropriate topology, of the set of all time\nobreakdash-\hspace{0pt}translates of $c_1$ (the so\nobreakdash-\hspace{0pt}called \emph{hull}). The topology must be, on the one hand, coarse enough for the hull to be a compact (metrizable) space, and, on the other hand, fine enough for, first, the time translation operator on the hull to be continuous, and, second, the solution operator to depend continuously on parameters, that is, members of the hull.  The paper~\cite{Sacker} gives a survey of subsets of function spaces that can serve as hulls.

    For the theory of linear skew\nobreakdash-\hspace{0pt}product (semi)flows on bundles whose fibers are Banach spaces and some of its applications, see, e.g., \cite{Sa-Se}, \cite{Chow-Le95}, \cite{Chow-Le96}, \cite{Shen-Yi}, \cite{Chi-Lat}, \cite{Monogr}, \cite{MiShPart3}, \cite{Ba-Dr-Va} for a very incomplete list arranged in chronological order.
    \item
    The \emph{random} case,
    \begin{equation}
    \label{eq:intro-random}
    \begin{cases}
        \displaystyle \frac{\partial u}{\partial t}(t, x) = {\Delta}u(t, x) + c_1(\theta_{t}\omega, x) u(t - R(\theta_{t}\omega),x), & t > 0, \ x \in D
        \\[1.5ex]
        u(t, x) = 0 & t > 0, \ x \in \partial D,
    \end{cases}
    \end{equation}
    where $c_1$ is now defined on $\Omega \times D$ and $R$ is defined on $\Omega$, with $(\Omega, \mathfrak{F}, \mathbb{P})$ a probability space on which an ergodic measurable flow $\theta = (\theta_t)_{t \in \RR}$ acts.  Here the role of hull is played by $\Omega$, and the measurability of the flow $\theta$ is one of the assumptions.  In order to apply the theory of Lyapunov exponents in the measurable setting, as presented in, e.g., \cite{Lian-Lu}, \cite{Fro-Ll-Qu}, \cite{GT-Qua14}, \cite{GT-Qua15}, \cite{Bl16}, one needs to show the measurable dependence of the solution operators on $\omega \in \Omega$
\end{itemize}
In the present paper we address the problem of continuous dependence on members of the hull.  As the space of coefficients we take a closed and bounded subset of the Banach space of essentially bounded (Lebesgue\nobreakdash-\hspace{0pt})measurable functions on $(0, T) \times D$, where $T > 0$, with the \WST topology induced by the duality pairing between $L_1$ and $L_{\infty}$.   Regarding the zero order coefficients and delay terms, no additional assumption is made.  In~particular, the dependence on $t$ can be quite weak.

Although there have been a lot of papers dealing with the issues of the existence of solutions of delay PDEs (many of them nonlinear, and admitting more general delay terms, employing various definitions of solutions, see, e.g., \cite{Tr-Webb74}, \cite{Tr-Webb76}, \cite{Tr-Webb78}, \cite{Fitz77}, \cite{Fitz78}, \cite{Ma-Sm90}, \cite{Ma-Sm91}, \cite{Wu}, \cite{Fr03}, \cite{Fr-Ni03}, \cite{Ba-Pia01}, \cite{Ba-Pia02}, \cite{Ba-Pia06}), the only papers we are aware of dealing explicitly with continuous dependence of solutions of delay PDEs on parameters are \cite{No-Nu-Ob-Sa} and \cite{Ob-Sa}.

\medskip
To give a flavor of our results, we formulate now some specializations of our main results to the case of~\eqref{eq:intro-0}.  We assume $1 < p < \infty$.

The first, a specialization of Theorem~\ref{thm:mild_solution_existence},  establishes the existence and uniqueness of mild solutions.
\begin{theorem*}
    Let $c_1 \in L_{\infty}((0, T) \times D)$, $u_0 \in C([-1, 0], L_p(D))$ and $R \in L_{\infty}((0, T))$ be such that $R(t) \in [0, 1]$ for Lebesgue\nobreakdash-\hspace{0pt}a.e.\ $t \in (0, T)$.  Then there exists a unique solution $u(\cdot; c_1, u_0, R) \in C([-1, T])$ of Eq.~\eqref{eq:intro-0} with initial condition $u(t; c_1, u_0, R) = u_0$, $t \in [-1, 0]$.  The solution is understood in a suitable integral sense \textup{(}a mild solution\textup{)}.
\end{theorem*}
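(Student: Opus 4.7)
The plan is to recast the problem in its Duhamel (mild) form using the Dirichlet heat semigroup $\{S(t)\}_{t\ge 0}$ on $L_p(D)$ and then solve the resulting integral equation by a contraction mapping argument on a short time interval, finally iterating to cover all of $[0,T]$. Concretely, I call $u \in C([-1,T],L_p(D))$ a mild solution provided $u(t)=u_0(t)$ for $t\in[-1,0]$ and
\begin{equation*}
u(t) \;=\; S(t)\,u_0(0) \;+\; \int_0^t S(t-s)\,\bigl[c_1(s,\cdot)\,u(s-R(s),\cdot)\bigr]\,\dd s, \qquad t\in[0,T].
\end{equation*}
The first thing to justify is that the integrand is Bochner-integrable in $L_p(D)$. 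Since $R(s)\in[0,1]$, one has $s-R(s)\in[-1,s]\subseteq[-1,T]$, so $u(s-R(s),\cdot)$ is well defined. Measurability of $s\mapsto u(s-R(s),\cdot)$ as an $L_p(D)$-valued map follows from measurability of $s\mapsto s-R(s)$ combined with the continuity of $u$; multiplication by the bounded measurable $c_1(s,\cdot)$ preserves measurability, and the $L_p$-norm is bounded by $\norminfty{c_1}\cdot\sup_{t\in[-1,T]}\norm{u(t)}_{L_p}$. Together with the contractivity $\norm{S(t-s)}_{L_p\to L_p}\le 1$, this renders the integral well defined in $L_p(D)$.

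Next, I would fix $\tau\in(0,T]$ and work on the closed subset
\begin{equation*}
X_\tau \;=\; \nwb{v\in C([-1,\tau],L_p(D)) : v(t)=u_0(t)\ \text{for}\ t\in[-1,0]}
\end{equation*}
of the Banach space $C([-1,\tau],L_p(D))$, equipped with the sup norm. Define $\mathcal{T}\colon X_\tau\to X_\tau$ by $(\mathcal{T}v)(t)=u_0(t)$ on $[-1,0]$ and by the right\nobreakdash-\hspace{0pt}hand side of the Duhamel formula on $[0,\tau]$. That $\mathcal{T}v\in C([-1,\tau],L_p(D))$ follows from strong continuity of $S(\cdot)$ on $L_p(D)$ together with dominated convergence (in particular, continuity at $t=0^+$ uses $u_0(0^+)=u_0(0)$ in $L_p(D)$). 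For two functions $v_1,v_2\in X_\tau$, since they agree with $u_0$ on $[-1,0]$, the difference $v_1(s-R(s))-v_2(s-R(s))$ vanishes whenever $s-R(s)\le 0$; hence
\begin{equation*}
\norm{(\mathcal{T}v_1-\mathcal{T}v_2)(t)}_{L_p} \;\le\; \norminfty{c_1}\,\tau\,\sup_{r\in[0,\tau]}\norm{v_1(r)-v_2(r)}_{L_p}.
\end{equation*}
Choosing $\tau$ with $\norminfty{c_1}\,\tau<1$ makes $\mathcal{T}$ a strict contraction on $X_\tau$, hence Banach's theorem yields a unique fixed point, i.e.\ a unique mild solution on $[-1,\tau]$.

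Finally, I would extend the solution to $[-1,T]$ by iteration. Having obtained the solution on $[-1,k\tau]$, one reruns the fixed-point argument on the interval $[k\tau,(k+1)\tau\wedge T]$, treating the already-constructed piece as the new ``initial history''; the contraction constant remains $\norminfty{c_1}\tau$, so finitely many steps suffice. Uniqueness on $[-1,T]$ follows by concatenating the local uniqueness statements: if $u$ and $\tilde u$ are two mild solutions, they coincide on $[-1,\tau]$ by the Banach fixed-point step, and then inductively on $[-1,k\tau]$.

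The main technical obstacle is the first one: because $R$ is merely essentially bounded (not continuous), the delayed argument $s\mapsto s-R(s)$ is only measurable, and $s\mapsto u(s-R(s),\cdot)$ is not a continuous $L_p(D)$-valued map. One therefore cannot rely on pointwise reasoning and must systematically verify Bochner\nobreakdash-\hspace{0pt}measurability and integrability of the composition. Once this is settled (via separability of $L_p(D)$ and continuity of $u$, so that Pettis' theorem applies), the rest of the argument is a standard Banach fixed-point / step-method construction for mild solutions of abstract delay Cauchy problems.
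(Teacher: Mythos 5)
Your proposal is correct and follows essentially the same route as the paper: the paper defines mild solutions by the same Duhamel formula (with the evolution operator $U_{\tilde a}(t,s)$ in place of $S(t-s)$, which reduces to the Dirichlet heat semigroup in this special case), proves the same measurability facts for $\zeta\mapsto c_1(\zeta,\cdot)\,u(\zeta-R(\zeta))$ via the measurability of $\Phi$ and the continuity of $u$, runs the identical contraction estimate on a short interval of length independent of the data (exploiting, as you do, that the difference of two candidates vanishes where $\zeta-R(\zeta)\le 0$), and then globalizes by a step/gluing argument. The only cosmetic difference is that the paper phrases the extension to $[0,T]$ as a supremum argument over the set of times admitting a unique solution rather than as an explicit finite iteration.
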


The second, a specialization of Theorem~\ref{th:joint_continouity}\ref{th:joint_continouity_ii}, establishes the continuity, in a suitable sense, of a solution with respect to initial conditions and parameters.
\begin{theorem*}
    Assume that $(c_{1,m})_{m = 1}^{\infty}$, $(u_{0,m})_{m = 1}^{\infty}$ and $(R_m)_{m = 1}^{\infty}$ are sequences satisfying the following:
    \begin{itemize}[label=$\bullet$]
        \item
        $c_{1,m}$ have their $L_{\infty}((0, T) \times D)$\nobreakdash-\hspace{0pt}norms uniformly bounded, and converge in the \WST topology to $c_1 \in L_{\infty}((0, T) \times D)$;
        \item
        $u_{0,m}$ converge in the norm topology of $C([-1, 0], L_p(D)$ to $u_0$;
        \item
        $R_m$ converge for Lebesgue\nobreakdash-\hspace{0pt}a.e.\ $t \in (0, T)$ to $R$.
    \end{itemize}
    Then
    \begin{equation*}
        u(\cdot; c_{1,m}, u_{0,m}, R_m) \to u(\cdot; c_{1}, u_{0}, R)
    \end{equation*}
    in the $ C([-1, 0], L_p(D))$\nobreakdash-\hspace{0pt}norm.
\end{theorem*}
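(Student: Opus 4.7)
The plan is to compare $u_m := u(\cdot;c_{1,m},u_{0,m},R_m)$ and $u := u(\cdot;c_1,u_0,R)$ via their mild representations. Writing $\{S(t)\}_{t\ge0}$ for the Dirichlet heat semigroup on $L_p(D)$ and $A$ for its generator,
\begin{equation*}
    u_m(t) = S(t)u_{0,m}(0) + \int_0^t S(t-s)\bigl[c_{1,m}(s,\cdot)\,u_m(s-R_m(s),\cdot)\bigr]\,\dd s, \quad t\in[0,T],
\end{equation*}
and $u_m(t)=u_{0,m}(t)$ for $t\in[-1,0]$ (analogously for $u$). I would first record the uniform a~priori bound $\sup_m\norm{u_m}_{C([-1,T],L_p(D))}\le K$, obtained via Gronwall from boundedness of $\norm{S(t)}_{L_p\to L_p}$ on $[0,T]$ and the uniform $L_\infty$-bound on $c_{1,m}$.

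I then split $u_m-u=A_m+B_m+C_m+D_m$ with
\begin{align*}
    A_m(t) &= S(t)\bigl(u_{0,m}(0)-u_0(0)\bigr),\\
    B_m(t) &= \int_0^t S(t-s)\,c_{1,m}(s,\cdot)\bigl[u_m(s-R_m(s))-u(s-R_m(s))\bigr]\,\dd s,\\
    C_m(t) &= \int_0^t S(t-s)\,c_{1,m}(s,\cdot)\bigl[u(s-R_m(s))-u(s-R(s))\bigr]\,\dd s,\\
    D_m(t) &= \int_0^t S(t-s)\bigl(c_{1,m}(s,\cdot)-c_1(s,\cdot)\bigr)u(s-R(s))\,\dd s.
\end{align*}
The term $A_m$ vanishes uniformly since $u_{0,m}(0)\to u_0(0)$ in $L_p(D)$. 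The term $C_m$ vanishes uniformly because $u\in C([-1,T],L_p(D))$ and $R_m\to R$ a.e.\ yield $u(s-R_m(s))\to u(s-R(s))$ in $L_p$ for a.e.\ $s$, whence dominated convergence (using the uniform $L_\infty$-bound on $c_{1,m}$) closes the argument. The term $B_m$ produces the Gronwall expression $\normp{B_m(t)}\le\tilde{M}\int_0^t\sup_{\tau\in[-1,s]}\normp{(u_m-u)(\tau)}\,\dd s$, to be absorbed once $A_m,C_m,D_m$ are uniformly small.

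The critical term is $D_m$, where \WST convergence of $c_{1,m}$ enters. For fixed $t\in[0,T]$ and $\phi\in L_q(D)$ (with $1/p+1/q=1$),
\begin{equation*}
    \bigl\langle\phi,D_m(t)\bigr\rangle_{L_q,L_p} = \iint_{(0,t)\times D}\bigl(c_{1,m}-c_1\bigr)(s,x)\,(S(t-s)^{*}\phi)(x)\,u(s-R(s),x)\,\dd x\,\dd s,
\end{equation*}
and the bracketed test function lies in $L_1((0,t)\times D)$ (its $L_q$-norm in $x$ is uniformly bounded on $s\in[0,t]$, and $u(\cdot-R(\cdot))$ is uniformly bounded in $L_p(D)$). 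Hence $D_m(t)\rightharpoonup 0$ weakly in $L_p(D)$, pointwise in $t$.

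The main obstacle is upgrading this pointwise weak convergence to uniform norm convergence on $[0,T]$. For this I invoke the smoothing of the parabolic semigroup. Compactness of $S(\varepsilon)\colon L_p(D)\to L_p(D)$ for $\varepsilon>0$, applied to the splitting $\int_0^t=\int_0^{t-\varepsilon}+\int_{t-\varepsilon}^{t}$ and pulling $S(\varepsilon)$ out of the first piece, places $\{D_m(t)\}_m$ in a precompact subset of $L_p(D)$ up to a uniform $O(\varepsilon)$ remainder. Analyticity of $S(\cdot)$ and the standard bound $\norm{AS(r)}\le C/r$ yield, via the identity $D_m(t)-D_m(t')=\int_{t'}^{t}S(t-s)(\cdots)\,\dd s+[S(t-t')-I]D_m(t')$, a common modulus of continuity in $t$ for the family $\{D_m\}$. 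Arzel\`a--Ascoli then makes $\{D_m\}$ precompact in $C([0,T],L_p(D))$; since the only admissible subsequential weak limit is $0$, the entire sequence $D_m$ converges strongly and uniformly to $0$. Feeding the four estimates into the Gronwall inequality for $\sup_{\tau\in[-1,t]}\normp{(u_m-u)(\tau)}$ then delivers the asserted convergence in $C([-1,T],L_p(D))$.
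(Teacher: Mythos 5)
Your proposal is correct, but it takes a genuinely different route from the paper. The paper does not run a direct Gronwall argument on $u_m-u$: it first splits off the initial-condition perturbation using the a~priori bound $\norm{u(\cdot;a,u_0,R)}\le\overline{M}\norm{u_0}$ (Proposition~\ref{prop: global mild L_p estim}), and then, for fixed $u_0$, uses the compactness of the solution operator (Theorem~\ref{c:delay-compactness-up_to_0}) to extract a subsequence $u_{m_k}$ converging uniformly on $[0,T]$ to some $\hat{u}$; it passes to the limit in the integral equation pointwise in $t$ (the decomposition in~\eqref{J_1_2+DA6,7} is essentially your $B_m+C_m+D_m$), identifies $\hat{u}$ as a mild solution, and concludes by uniqueness. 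Your argument dispenses with the subsequence extraction and the appeal to uniqueness, at the price of having to prove that the critical term $D_m$ tends to zero \emph{uniformly} in $t$ rather than merely pointwise; the compactness you need is then only that of $\{D_m(t)\}_m$, which is built from the single fixed solution $u$, not of the whole family $\{u_m\}$ --- a genuine structural simplification. The treatment of the \WST term itself is identical in substance to the paper's $J^{(3)}_m$/$K^{(2)}_m$ step: the $L_1$--$L_\infty$ duality computation giving weak convergence to zero, upgraded to strong convergence by precompactness. Where the paper gets that precompactness and the equicontinuity in $t$ from the norm continuity of $(a_0,s,t)\mapsto U_{a_0}(t,s)$ and Mazur's theorem (Lemmas~\ref{lm:compact-aux} and~\ref{lm:compact-aux-1}), you invoke compactness of $S(\varepsilon)$ and analyticity of the heat semigroup; this is legitimate for the specialized equation~\eqref{eq:intro-0} with principal part $\Delta$, but would not survive the passage to the general operators of~\eqref{main-eq}, which is precisely why the paper builds the evolution-family versions of these facts. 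Two points you gloss over but which are routine: the strong measurability of the Bochner integrands involving $u(\cdot-R(\cdot))$ and $c_{1,m}$ (the paper's Lemmas~\ref{lemma:v_m-boundednes} and~\ref{lm:I_i}), and the equicontinuity estimate for $[S(t-t')-I]D_m(t')$, where the bound $\norm{AS(r)}\le C/r$ alone gives a divergent integral and one must either pass through fractional powers ($\norm{A^{\alpha}D_m(t')}$ is uniformly bounded for $\alpha<1$) or carry out the $O(h\log(1/h))$ computation.
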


\medskip
The paper is organized as follows.

Section~\ref{sect:assumptions} presents the assumptions used throughout.

In Section~\ref{sect:weak-sol} results concerning the existence and basic properties of (weak) solutions to linear parabolic PDEs without delay terms are gathered.  They are for the most part taken from~\cite{Monogr} and based on~\cite{Dan2}, though some of them (Proposition~\ref{prop:higher_orders-skew_product-norm_continuity}, for example), perhaps belonging to the folk lore,  appear in~print for the first time.

Section~\ref{sect:mild} is devoted to defining and proving the existence and uniqueness of (mild) solutions of PDEs with delay terms.  Section~\ref{sect:dependence_on_IC} provides estimates of the solutions which are then used to prove the continuous dependence on initial conditions.

Section~\ref{sect:dependence-on-parameters} can be considered the main part of the paper.  Here the continuous dependence of solutions on coefficients and delay terms is proved under very weak assumptions: coefficients are required to converge in the \WST topology only.

\medskip
It should be mentioned that a similar approach has been successfully applied in the case of ordinary differential equations with delay in~\cite{MiShPart3}, \cite{Mi-No-Ob18}, \cite{Mi-No-Ob20}, see also \cite[Chpt.~5]{Do-diss}, \cite{Do-Sieg}, \cite{Ba-Va}.

\subsection{General Notations}
We write $\RR^{+}$ for $[0, \infty)$, and $\QQ$ for the set of all rationals.

If $B \subset A$, we write $\mathbbm{1}_{\! B}$ for the \emph{indicator} of $B$: $\mathbbm{1}_{\! B}(a) = 1$ if $a \in B$ and $\mathbbm{1}_{\! B}(a) = 0$ if $a \in A \setminus B$.

For a metric space $(Y, d)$, $\mathfrak{B}(Y)$ denotes the
$\sigma$\nobreakdash-\hspace{0pt}algebra of all Borel subsets of $Y$.

For Banach spaces $X_1$, $X_2$ with norms $\lVert \cdot \rVert_{X_1}$, $\lVert \cdot \rVert_{X_2}$, we let $\mathcal{L}(X_1, X_2)$ stand for the Banach space of bounded linear mappings from $X_1$ into $X_2$, endowed with the standard norm $\lVert \cdot \rVert_{X_1, X_2}$.  Instead~of $\mathcal{L}(X, X)$ we write $\mathcal{L}(X)$, and instead~of $\lVert \cdot \rVert_{X, X}$ we write $\lVert \cdot \rVert_{X}$.  $\mathcal{L}_{\mathrm{s}}(X_1, X_2)$ denotes the space of bounded linear mappings from $X_1$ into $X_2$ equipped with the strong operator topology. Instead~of $\mathcal{L}_{\mathrm{s}}(X, X)$ we write $\mathcal{L}_{\mathrm{s}}(X)$.

\smallskip
Throughout the paper, $T > 0$ will be fixed.

We set
\begin{equation*}
   \Delta \vcentcolon= \{\, (s, t) \in \RR^2 : 0\le s \le t\le T \,\}, \qquad \dot{\Delta} \vcentcolon= \{\, (s, t) \in \RR^2 : 0\le s < t\le T \,\}.
\end{equation*}

Throughout the paper, $D \subset \RR^N$ stands for a bounded domain, with boundary $\partial D$.

By $\mathfrak{L}((0, T))$ we understand the $\sigma$\nobreakdash-\hspace{0pt}algebra of all Lebesgue\nobreakdash-\hspace{0pt}measurable subsets of $(0, T)$.  The notations $\mathfrak{L}(D)$ and $\mathfrak{L}((0, T) \times D)$ are defined in a similar way.

For $u$ belonging to a Banach space of (equivalence classes of) functions defined on $D$ we will denote by $u[x]$ the value of $u$ at $x \in D$.

$L_p(D)=L_p(D,\RR)$ has the standard meaning, with the norm, for $1 \le p < \infty$, given by

\begin{equation*}
    \lVert u \rVert_{L_p(D)} \vcentcolon= \biggr( \int_{D} \lvert u[x] \rvert^{p} \, \mathrm{d}x \biggr)^{\!\frac{1}{p}},
\end{equation*}
and for $p = \infty$ given by
\begin{equation*}
    \lVert u \rVert_{L_{\infty}(D)} \vcentcolon=  \esssup\limits_{D}{u}.
\end{equation*}

For $1 \le p \le \infty$ let $p'$ stand for the H\"older conjugate of $p$.  The duality pairing between $L_p(D)$ and $L_{p'}(D)$ is given, for $1 < p < \infty$, or for $p =1$ and $p' = \infty$, by
\begin{equation*}
        \langle u, v \rangle_{L_p(D), L_{p'}(D)} =  \int\limits_{D} u[x] \, v[x] \, \mathrm{d}x, \quad
    u \in L_p(D), \ v \in L_{p'}(D).
\end{equation*}

Let $u$ be an equivalence class of functions defined for Lebesgue\nobreakdash-\hspace{0pt}a.e.\ $t \in (0, T)$ and taking values in $L_p(D)$, $1 \le p < \infty$ (in the sequel we will refer to such $u$ simply as a function).

\begin{itemize}[label=$\bullet$]
    \item
    $u$ is said to be \emph{measurable} if it is $(\mathfrak{L}((0, T)), \mathfrak{B}(L_p(D)))$\nobreakdash-\hspace{0pt}measurable, meaning that the preimage under $u$ of any open subset of $L_p(D)$ belongs to $\mathfrak{L}((0, T))$.
    \item
    $u$ is \emph{strongly measurable} (sometimes called \emph{Bochner measurable}) if there exists a sequence $(u_m)_{m = 1}^{\infty}$ of simple functions such that $\lim\limits_{m \to \infty} \lVert u_m(t) - u(t) \rVert_{L_p(D)} = 0$ for Lebesgue\nobreakdash-\hspace{0pt}a.e.\ $t \in (0, T)$.
    \item
    $u$ is \emph{weakly measurable} if for any $v \in L_{p'}(D)$ the function
    \begin{equation*}
        [\, t \mapsto \langle u(t), v \rangle_{L_p(D), L_{p'}(D)} \,]
    \end{equation*}
    is $(\mathfrak{L}((0, T)), \mathfrak{B}(\RR))$\nobreakdash-\hspace{0pt}measurable.
\end{itemize}
\begin{theorem}
\label{thm:equiv-measurable}
    For $u \colon (0, T) \to L_p(D)$ measurability, strong measurability and weak measurability are equivalent.
\end{theorem}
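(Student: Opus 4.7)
The plan is to invoke the classical Pettis measurability theorem, capitalising on the fact that $L_p(D)$ is separable for $1 \le p < \infty$, which makes the essential\nobreakdash-\hspace{0pt}separable\nobreakdash-\hspace{0pt}valuedness clause of the general version automatic.

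First I would handle the two easier implications. Strong measurability $\Rightarrow$ measurability: every simple function is Borel measurable in the norm topology, and a pointwise almost\nobreakdash-\hspace{0pt}everywhere limit of Borel measurable functions taking values in a separable metric space remains Borel measurable after redefinition on a null set (since the Borel $\sigma$\nobreakdash-\hspace{0pt}algebra of such a space is countably generated by open balls). Measurability $\Rightarrow$ weak measurability: for each fixed $v \in L_{p'}(D)$, the linear functional $w \mapsto \langle w, v \rangle_{L_p(D), L_{p'}(D)}$ is norm\nobreakdash-\hspace{0pt}continuous and hence Borel measurable, so its composition with $u$ is Lebesgue measurable.

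The substantive direction is weak measurability $\Rightarrow$ strong measurability, which is where Pettis's theorem enters. I would fix a countable dense set $\{w_k\}_{k=1}^{\infty} \subset L_p(D)$. Applying Hahn\nobreakdash-\hspace{0pt}Banach to each $w_k$ produces norming functionals, and the resulting countable family $\{v_{\ell}\}_{\ell=1}^{\infty}$ in the closed unit ball of $L_{p'}(D)$ satisfies $\lVert w \rVert_{L_p(D)} = \sup_{\ell} \lvert \langle w, v_{\ell} \rangle \rvert$ for every $w \in L_p(D)$. Consequently, weak measurability of $u$ forces Lebesgue measurability of the real\nobreakdash-\hspace{0pt}valued maps $t \mapsto \lVert u(t) - w_k \rVert_{L_p(D)}$ as countable suprema of measurable functions. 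I would then construct simple approximants $u_m$ by taking, at each $t$, the least $k \le m$ minimising $\lVert u(t) - w_k \rVert_{L_p(D)}$; measurability of the distance maps makes each $u_m$ a simple function, while density of $\{w_k\}$ forces $u_m(t) \to u(t)$ in norm for every $t$, establishing strong measurability.

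The principal obstacle — and the only place genuine work is required — is justifying measurability of the distance functions $t \mapsto \lVert u(t) - w_k \rVert_{L_p(D)}$ starting from mere weak measurability. Separability of $L_p(D)$ is indispensable here, since it is precisely what yields the countable norming family that reduces the norm to a countable supremum of weakly measurable quantities; without this ingredient the Pettis theorem would need the extra hypothesis of essentially separable range, which the present hypothesis does not provide.
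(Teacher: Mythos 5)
Your proof is correct, and it is self-contained where the paper is not: the paper disposes of this theorem by citing Pettis's measurability theorem for the equivalence of strong and weak measurability and Varadarajan's result for the implication from measurability to strong measurability, leaving only strong $\Rightarrow$ measurable as an exercise. You instead close the cycle strong $\Rightarrow$ measurable $\Rightarrow$ weak $\Rightarrow$ strong, which has the pleasant side effect of making the reference to Varadarajan unnecessary (measurable $\Rightarrow$ strong comes for free through the weak route), and reduces everything to the separable-range case of Pettis, which you then prove from scratch. The individual steps all check out: one Hahn--Banach functional per element of a countable dense set $\{w_k\}$ does yield a countable family $\{v_{\ell}\}$ in the unit ball of $L_{p'}(D)$ with $\lVert w\rVert_{L_p(D)}=\sup_{\ell}\lvert\langle w,v_{\ell}\rangle\rvert$; the argmin sets defining $u_m$ are measurable because they are determined by finitely many comparisons of measurable distance functions; and $\min_{k\le m}\lVert u(t)-w_k\rVert_{L_p(D)}\to 0$ by density. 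Two minor remarks. First, in the step strong $\Rightarrow$ measurable, what actually licenses discarding the exceptional null set is the completeness of the Lebesgue $\sigma$\nobreakdash-\hspace{0pt}algebra $\mathfrak{L}((0,T))$, not separability of the range; the standard $\liminf$/$\limsup$ argument on preimages of open sets then gives measurability of the everywhere-defined limit. Second, your argument leans on $L_p(D)$ being separable and on $L_{p'}(D)$ being its full dual, both of which hold exactly because the surrounding definitions restrict to $1\le p<\infty$; it is worth saying so explicitly, since for $p=\infty$ the statement would fail as written.
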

The equivalence of strong and weak measurability is a consequence of Pettis's Measurability Theorem (see, e.g., \cite[Thm.~2.1.2]{DiUhl}).  For the fact that measurability implies strong measurability see, e.g., \cite[Thm.~1]{Vara}, whereas the proof of the reverse implication is a simple exercise.

For our purposes we will use the following definitions (see, e.g.\ \cite[Sect.~X.4]{AmEsch}).  A measurable $u \colon (0, T) \to L_p(D)$ belongs to $L_r((0, T), L_p(D))$, $1 \le r < \infty$, if $\norm{u(\cdot)}_{L_p(D)}$ belongs to $L_r((0, T))$, with
\begin{align*}
 \norm{u}_{L_r((0, T), L_p(D))}  = \biggl( \int\limits_{0}^{T} \norm{u(t)}_{L_p(D)}^r \, \dd t \biggr)^{\!\! 1/r}.
\end{align*}
Similarly, a measurable $u \colon (0, T) \to L_p(D)$ belongs to $L_{\infty}((0, T), L_p(D))$, if $\norm{u(\cdot)}_{L_p(D)}$ belongs to $L_{\infty}((0, T))$, with
\begin{align*}
 \norm{u}_{L_{\infty}((0, T), L_p(D))}  = \esssup_{t \in (0, T)}{\norm{u(t)}_{L_p(D)}}\ .
\end{align*}

The following result, a part of~{\textup{\cite[Lemma~III.11.16]{DS}}}, will be used several times.

\begin{lemma}
~
\label{lm:Dunford-Schwartz}
\begin{enumerate}[label=\textup{(\alph*)},ref=\ref{lm:Dunford-Schwartz}\textup{(\alph*)}]
    \item\label{DS_a}
    If $u \in L_1((0, T), L_1(D))$ then the function
        \begin{equation*}
            \bigl[\, (0, T) \times D \ni (t, x) \mapsto u(t)[x] \in \RR \,\bigr]
        \end{equation*}
        belongs to $L_1((0, T) \times D, \RR)$.
    \item\label{DS_b}
     If $w$ is $(\mathfrak{L}((0, T) \times D), \mathfrak{B}(\RR))$\nobreakdash-\hspace{0pt}measurable, and for Lebesgue\nobreakdash-\hspace{0pt}a.e.\ $t \in (0, T)$ the \emph{$t$\nobreakdash-\hspace{0pt}section}
        $w(t, \cdot)$ belongs to $L_p(D)$, where $1 \le p < \infty$, then the function
        \begin{equation*}
            \Bigl[\, (0, T) \ni t \mapsto \bigl[\, D \ni x \mapsto w(t, x) \in \RR \,\bigr] \, \Bigr]
        \end{equation*}
        is $(\mathfrak{L}(0, T), \mathfrak{B}(L_p(D)))$\nobreakdash-\hspace{0pt}measurable.
\end{enumerate}
\end{lemma}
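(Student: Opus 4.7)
The plan is to treat the two parts separately; in both, the natural workflow is to approximate by elementary objects for which joint/scalar measurability is transparent, and then pass to the limit using Tonelli/Fubini. The equivalence of weak, strong, and Borel measurability provided by Theorem~\ref{thm:equiv-measurable} is the common engine.

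For \ref{DS_a}, I would first reduce to simple $L_1(D)$\nobreakdash-\hspace{0pt}valued functions. By the standard density results for Bochner spaces, there is a sequence of simple functions $u_n = \sum_{k=1}^{K_n} c_{k}^{(n)} \mathbbm{1}_{E_k^{(n)}}$ with $c_k^{(n)} \in L_1(D)$ and $E_k^{(n)} \in \mathfrak{L}((0,T))$, such that $\norm{u_n - u}_{L_1((0,T), L_1(D))} \to 0$. Fixing a representative of each $c_k^{(n)}$, the scalar function $W_n(t,x) \vcentcolon= \sum_k c_k^{(n)}[x]\,\mathbbm{1}_{E_k^{(n)}}(t)$ is $(\mathfrak{L}((0,T)\times D), \mathfrak{B}(\RR))$\nobreakdash-\hspace{0pt}measurable, and Tonelli gives
\begin{equation*}
\norm{W_n - W_m}_{L_1((0,T)\times D)} = \norm{u_n - u_m}_{L_1((0,T), L_1(D))}.
\end{equation*}
Hence $\{W_n\}$ is Cauchy in $L_1((0,T) \times D)$ and converges to some $W$ in that space. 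Extracting a common subsequence along which $u_n(t) \to u(t)$ in $L_1(D)$ for a.e.\ $t$ and, via Fubini applied to $\abs{W_n - W}$, $W_n(t, \cdot) \to W(t, \cdot)$ in $L_1(D)$ for a.e.\ $t$, one concludes $W(t, \cdot) = u(t)$ in $L_1(D)$ for a.e.\ $t$, so $W$ furnishes the desired jointly measurable, integrable representative.

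For \ref{DS_b}, by Theorem~\ref{thm:equiv-measurable} it is enough to verify weak measurability: for each $v \in L_{p'}(D)$, the map $t \mapsto \int_D w(t,x) v(x)\,\dd x$ should be $(\mathfrak{L}((0,T)), \mathfrak{B}(\RR))$\nobreakdash-\hspace{0pt}measurable. The product $(t,x) \mapsto w(t,x) v(x)$ is jointly measurable, but may not be globally integrable, so I would truncate via $w_n \vcentcolon= w \mathbbm{1}_{\{\abs{w} \le n\}}$. Boundedness of $D$ yields $v \in L_1(D)$, whence $w_n v \in L_1((0,T) \times D)$, and Fubini makes $t \mapsto \int_D w_n(t,x) v(x)\,\dd x$ measurable. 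For Lebesgue\nobreakdash-\hspace{0pt}a.e.\ $t$ the hypothesis $w(t, \cdot) \in L_p(D)$ together with H\"older's inequality makes $w(t, \cdot)\,v \in L_1(D)$ and dominates the truncated integrands, so dominated convergence yields pointwise\nobreakdash-\hspace{0pt}a.e.\ convergence of these integrals to $\int_D w(t,x) v(x)\,\dd x$. Measurability is preserved under pointwise limits, completing the argument.

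The main technical nuisance I expect lies in \ref{DS_a}, namely the bookkeeping required to pass from equivalence\nobreakdash-\hspace{0pt}class\nobreakdash-\hspace{0pt}valued functions $u_n$ to \emph{pointwise} representatives $W_n$ and then to a pointwise representative $W$ of $u$; the successive extraction of subsequences needed to reconcile convergence in $L_1((0,T),L_1(D))$ with convergence in $L_1((0,T)\times D)$ is the delicate point, but it is routine once Tonelli is invoked.
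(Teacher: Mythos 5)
Your argument is correct in both parts. Note, however, that the paper does not actually prove Lemma~\ref{lm:Dunford-Schwartz}: it is quoted as part of \cite[Lemma~III.11.16]{DS}, with Remark~\ref{remark:measurable_complement} added to reconcile the completed $\sigma$\nobreakdash-algebra $\mathfrak{L}((0,T)\times D)$ with the product $\sigma$\nobreakdash-algebra used in \cite{DS}. So there is no in\nobreakdash-paper proof to compare against; what you have written is a self\nobreakdash-contained proof along the classical lines. For~\ref{DS_a}, the density of simple functions, the Tonelli identity $\norm{W_n - W_m}_{L_1((0,T)\times D)} = \norm{u_n - u_m}_{L_1((0,T),L_1(D))}$, and the two subsequence extractions all go through; the only point worth making explicit is that the conclusion is about the existence of a jointly measurable representative $W$ with $W(t,\cdot)=u(t)$ in $L_1(D)$ for a.e.\ $t$, which is exactly what your limit supplies. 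For~\ref{DS_b}, the reduction to weak measurability via Theorem~\ref{thm:equiv-measurable} (Pettis, using separability of $L_p(D)$ for $p<\infty$), the truncation $w_n = w\,\mathbbm{1}_{\{\abs{w}\le n\}}$, and the dominated convergence step are all sound; the one hypothesis you use tacitly is that Fubini is applied in its version for complete measures, since $w$ is only assumed $(\mathfrak{L}((0,T)\times D),\mathfrak{B}(\RR))$\nobreakdash-measurable rather than product\nobreakdash-measurable --- this is precisely the issue Remark~\ref{remark:measurable_complement} addresses, and it costs only a modification of $w$ on a null set, so your proof is complete as written.
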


\begin{remark}
\label{remark:measurable_complement}
\textup{Regarding Lemma~\ref{DS_b}, we remark  that in~\cite{DS} the analog of $w$ is assumed to be $(\mathfrak{L}((0, T)) \otimes \mathfrak{L}(D),\mathfrak{B}(\RR))$\nobreakdash-\hspace{0pt}measurable (no completion) rather than $(\mathfrak{L}((0, T) \times D),\mathfrak{B}(\RR))$\nobreakdash-\hspace{0pt}measurable.  As an $(\mathfrak{L}((0, T) \times D), \mathfrak{B}(\RR))$\nobreakdash-\hspace{0pt}measurable function can be made into an $(\mathfrak{L}((0, T)) \otimes \mathfrak{L}(D), \mathfrak{B}(\RR))$\nobreakdash-\hspace{0pt}measurable function by changing its values on a set of $(N + 1)$\nobreakdash-\hspace{0pt}dimensional Lebesgue measure zero (see, e.g., \cite[Prop.~2.12]{Fol}), our formulation follows.}
\end{remark}

\section{Assumptions and Definitions}
\label{sect:assumptions}
\subsection{Main Equation}

Consider a linear second order partial differential equation with bounded delay

\begin{equation*}
\label{main-eq}
\tag{ME}
\begin{aligned}
\frac{\partial u}{\partial t}&= \sum_{i=1}^{N}\frac{\partial}{\partial x_i }\nbbb{\sum_{j=1}^{N} a_{i j}(t, x) \frac{\partial u}{\partial x_j }+a_i(t,x)u }\\
&+\sum_{i=1}^{N} b_{i}(t, x) \frac{\partial u }{\partial x_i }+c_{0}(t, x) u\\[2ex]
&+c_{1}(t, x) u(t-R(t)); \quad 0\le  t\le  T, \ x \in D.
\end{aligned}
\end{equation*}
The delay map $R \colon [0,T]\to\RR$ is bounded from below by $0$ and from above by $1$, i.e.
\begin{equation*}
    0 \le R(t)\le 1, \qquad  \forall\, t \in [0, T].
\end{equation*}
 Sometimes the function $\xi\mapsto \xi-R(\xi)$ will be denoted by $\Phi$. The function $\Phi$ will be called \textit{relative time delay}. Further, $D \subset \RR^N$ is a bounded domain with boundary $\p D$. The equation~\eqref{main-eq} will be complemented with boundary conditions
\begin{equation}
\label{main-bc}
\tag{BC}
\mathcal{B} u = 0, \qquad  0\le t\le T, \ x \in \p D.
\end{equation}
Later on, we will use the notation $\mathcal{B}_a$ in other to exhibit dependence of the operator $\mathcal{B}$ on $a$. The boundary conditions operator~\eqref{main-bc} will be one of this form
\begin{equation*}
 \mathcal{B} u=\left\{\begin{array}{lll}
u & \quad \mathrm{(Dirichlet) } \\[2ex]
\displaystyle  \sum_{i=1}^{N}\nbbb{\sum_{j=1}^{N} a_{i j}(t, x)\frac{\partial u(t)}{\partial x_j}+a_i(t,x)u} \nu_{i} & \quad \mathrm{(Neumann)} \\
\displaystyle  \sum_{i=1}^{N}\nbbb{\sum_{j=1}^{N} a_{i j}(t, x) \frac{\partial u(t)}{\partial x_j}+a_i(t,x)u} \nu_{i}+d_{0}(t, x) u  & \quad \mathrm{ (Robin). }
\end{array}\right.
\end{equation*}

\noindent
The vector $\boldsymbol{\nu} = (\nu_{1},\dots,\nu_{N})$ denotes the unit normal on the boundary $\p D$ pointing
out of $D$, interpreted in a certain weak sense (in the regular sense if $\p D$ is sufficiently smooth~\cite{Monogr}).

The initial condition is considered in the following way: for $u_0 \in C([- 1, 0], \linebreak L_p(D))$, where $1 \le p \le \infty$, find a solution of \eqref{main-eq}$+$\eqref{main-bc} satisfying
  \begin{equation}
  \label{main-ic}
  \tag{IC}
 u(t) =  u_0(t) \text{ for } t \in [- 1, 0].
  \end{equation}
\noindent
By \eqref{main-eq}$+$\eqref{main-bc} we understand equation \eqref{main-eq} equipped with boundary condition \eqref{main-bc}. Later on, we will also use $\eqref{main-eq}_a+\eqref{main-bc}_a$ notation to indicate that parameters of $\eqref{main-eq}+\eqref{main-bc}$ are fixed to be $a$.

 Note that, without any additional assumptions on the delay map $R$, the initial data cannot be taken from $L_p(D) \oplus L_r((- 1, 0), L_p(D))$, as in~~\cite{Mi-No-Ob18} or~\cite{Mi-No-Ob20}. The reason for this is that the delay map $R$ can be constructed in such way that $t\mapsto t-R(t)$ would be a constant function. In such a situation the initial value problem~$\eqref{main-eq}+\eqref{main-bc}$ would be not meaningful. Under some additional assumptions the situation can change, for example a constant delay map allows us to introduce generalized initial data in $L_p(D) \oplus L_r((- 1, 0), L_p(D))$. However, in this paper we will not focus on that.

 In order to clearly define the problem $\eqref{main-eq}_{a}+\eqref{main-bc}_a$, it is also necessary to set the delay map $R$. However, the assumptions on $R$ will be given later. Moreover, we suppress the notation of $R$ from $\eqref{main-eq}_{a}+\eqref{main-bc}_a$.  We will present the solutions of $\eqref{main-eq}_{a}+\eqref{main-bc}_a$ in the form of $u(\cdot;a,u_0,R)$ and often suppress the notation of $a,u_0$ or $R$ if it does not lead to confusion.

\subsection{Main Assumptions}
\noindent
We introduce some assumptions on the domain $D\subset\RR^N$ and the coefficients of the problem~\eqref{main-eq}+\eqref{main-bc}.

\begin{enumerate}[label=$\mathrm{(DA\hspace{.1em}\arabic*)}$,ref=$\mathrm{DA\hspace{.1em}\arabic*}$]
\item\label{a: boundary regularity}
\medskip\noindent
 (Boundary regularity) {\em For Dirichlet boundary conditions, $D$ is a
bounded domain. For Neumann or Robin boundary conditions, $D$ is a bounded domain with Lipschitz boundary.
}
\end{enumerate}
In all expressions of the type ``a.e.'' we consider $1$\nobreakdash-\hspace{0pt}dimensional Lebesgue measure on $(0, T)$, $N$\nobreakdash-\hspace{0pt}dimensional Lebesgue measure on $D$ and $(N - 1)$\nobreakdash-\hspace{0pt}dimensional Hausdorff measure on $\partial D$.  The latter is, by~\eqref{a: boundary regularity}, equal to surface measure on $\p D$.

The notation $L_{\infty}(\p D)$ [resp.\ $L_{\infty}((0, T) \times \p D)$] corresponds to surface measure on $\p D$ [resp.\ to the product of $1$\nobreakdash-\hspace{0pt}dimensional Lebesgue measure on $(0, T)$ and surface measure on $\p D$].

\begin{enumerate}[resume,label=$\mathrm{(DA\hspace{.1em}\arabic*)}$,ref=$\mathrm{DA\hspace{.1em}\arabic*}$]

\item\label{a: boundedness} (Boundedness) {\em The functions
\begin{itemize}[label=$\diamond$]
    \item
    $a_{ij} \colon (0,T) \times D \to \RR$ \textup{(}$i, j = 1, \dots, N$\textup{)},
    \item
    $a_{i} \colon (0,T) \times D \to \RR$ \textup{(}$i = 1, \dots, N$\textup{)},
    \item
    $b_{i} \colon (0,T) \times D \to \RR$ \textup{(}$i = 1, \dots, N$\textup{)},
    \item
    $c_0 \colon (0,T) \times D \to \RR$ ,
    \item
    $c_1 \colon (0,T) \times D \to \RR$
\end{itemize}
belong to $L_{\infty}((0,T)\times D)$. When the Robin boundary condition holds the function $d_0 \colon (0,T) \times \partial D \to \RR$ belongs to $L_{\infty}((0,T)\times \partial D)$.}
\end{enumerate}
\noindent
It is worth noticing at this point that uniform $L_{\infty}(D)$-boundedness of $a_{ij}(t,\cdot)$, $a_{i}(t,\cdot)$, $b_{i}(t,\cdot)$, $c_{0}(t,\cdot)$, $c_1(t,\cdot)$ and uniform $L_{\infty}(\partial D)$-boundedness of $d_0(t,\cdot)$ for a.e.\ $t\in(0,T)$ follow from the assumption \eqref{a: boundedness} and Fubini's theorem.

\medskip
\begin{definition}[$Y$ coefficients space]
\label{def:Y}
Let $Y$ be a subset of the Banach space $L_{\infty}((0,T) \times D, \mathbb{R}^{N^2 + 2N +2})  \oplus  L_{\infty}((0,T) \times \partial D, \mathbb{R})$ satisfying the following assumptions
\begin{enumerate}[label=$\mathrm{(Y\hspace{.1em}\arabic*)}$,ref=$\mathrm{Y\hspace{.1em}\arabic*}$]
    \item\label{Y1} $Y$ is norm\nobreakdash-\hspace{0pt}bounded and, moreover, it is closed \textup{(}hence compact, via the Banach--Alaoglu theorem\textup{)} in the \WST topology,
    \item\label{Y2} the function $d_0 \ge 0$ if the Robin boundary condition holds. The function $d_0$ is interpreted as the zero function in the Dirichlet or Neumann cases.
\end{enumerate}
\end{definition}
Elements of $Y$ will be denoted by
\begin{equation*}
    a \vcentcolon=\nb{ \n{a_{i j}}_{i, j=1}^{N},\n{a_{i}}_{i=1}^{N},\n{b_{i}}_{i=1}^{N}, c_0,c_1,d_{0} }\in Y.
\end{equation*}

The \WST topology of the space $Y$ is understood in the standard sense, namely, as the \WST topology induced via the isomorphism
\begin{multline*}
      L_{\infty}((0,T) \times D, \mathbb{R}^{N^2 + 2N +2}) \oplus  L_{\infty}((0,T) \times \partial D, \mathbb{R}) \\
     \cong \big( L_{1}((0,T)) \times D, \mathbb{R}^{N^2 + 2N +2}) \oplus L_{1}((0,T) \times \partial D, \mathbb{R})\big)^*.
\end{multline*}

\begin{definition}[Flattening $Y$ to $Y_{0}$]
The mapping defined on $Y$ by
\begin{equation*}
   \nb{ \n{a_{i j}}_{i, j=1}^{N},\n{a_{i}}_{i=1}^{N},\n{b_{i}}_{i=1}^{N}, c_0,c_1,d_{0} }\, \widetilde{} \vcentcolon=  \nb{ \n{a_{i j}}_{i, j=1}^{N},\n{a_{i}}_{i=1}^{N},\n{b_{i}}_{i=1}^{N}, c_0,0,d_{0} }
\end{equation*}
will be called the \emph{flattening} of $a\in Y$.
\end{definition}
The above mapping is obviously continuous.  As a consequence, the image $Y_0$ of $Y$ under that mapping shares properties analogous to \eqref{Y1} and \eqref{Y2}.

\begin{enumerate}[resume,label=$\mathrm{(DA\hspace{.1em}\arabic*)}$,ref=$\mathrm{DA\hspace{.1em}\arabic*}$]
\item\label{Ellipticity} (Ellipticity)
{\em There exists a constant $\alpha_0 > 0$ such that for any $a_0 \in Y_0$ the inequality
\begin{equation*}
    \displaystyle \sum\limits_{i,j=1}^{N} a_{ij}(t, x) \xi_{i} \xi_{j} \ge \alpha_0 \sum\limits_{i=1}^{N} \xi_{i}^2,
\end{equation*}
holds for a.e.\ $(t,x) \in (0,T) \times D$ and all $\xi \in \RR^{N}$, and the functions $a_{ij}(\cdot,\cdot)$ are symmetric in the indices, i.e. $a_{ij}(\cdot,\cdot) \equiv a_{ji}(\cdot,\cdot)$ for all $i, j = 1, \dots, N$.}

\item\label{a: compactness of Y } (Sequential compactness of $Y_0$ with respect to convergence a.e.)

{\em Any sequence $(a_{0,m})_{m = 1}^{\infty}$ of elements of $Y_0$, where
\begin{equation*}
  a_{0, m} \vcentcolon= \bigl( (a_{ij,m})_{i,j=1}^{N}, (a_{i,m})_{i=1}^{N}, (b_{i,m})_{i=1}^{N}, c_{0,m}, 0, d_{0,m} \bigr),
\end{equation*}
convergent as $m \to \infty$ in the \WST topology to $a_0 \in Y_0$ has the property that}
\begin{itemize}[label=$\bullet$]
\item
{\em the sequence
$\bigl( (a_{ij,m})_{i,j=1}^{N}, (a_{i,m})_{i=1}^{N}, (b_{i,m})_{i=1}^{N} \bigr)$ converges to  $ \bigl( (a_{ij})_{i,j=1}^{N}, \linebreak (a_{i})_{i=1}^{N}, (b_{i})_{i=1}^{N} \bigr)$ pointwise a.e.\ on $(0,T) \times D$,}
\item
{\em the sequence
$d_{0,m}$ converges to $d_{0}$ pointwise a.e.\ on $(0,T) \times \partial D$.
}
\end{itemize}

\medskip\noindent

\textup{Occasionally we will use the following.}
\item\label{a:only_c_continuous}
{\em $Y_0$ is a singleton.}
\end{enumerate}
For the purposes of studying continuous dependence on parameters and delay we introduce now the delay class and the relative delay class equipped with suitable topologies.
\begin{definition}
The delay class is defined as follows
 \begin{equation*}
  \mathcal{R}\vcentcolon=\big\{R\in L_{\infty}((0,T)): \, R(t)\in [0,1] \text{ for a.e. } t\in (0,T) \big\}.
 \end{equation*}
\noindent
The delay class is equipped with the \WST topology.
\end{definition}
\noindent
At some moments we also use the following notation $\widetilde{\mathcal{R}} \vcentcolon= \{[\,t\mapsto t-R(t)\,]: \, R\in \mathcal{R} \}$ especially in more abstract lemmas when general properties of the mapping $t\mapsto t-R(t)$ are important.

\begin{remark}
\label{R_class_compactnes}
 The delay class $\mathcal{R}$ is a norm\nobreakdash-\hspace{0pt}bounded, convex and weak\nobreakdash-\hspace{0pt}\textup{*} closed subset of $ L_{\infty}((0,T))$, hence, by the Banach--Alaoglu theorem, it is compact in the \WST topology.
\end{remark}

The following assumption is a property of a subset $\mathcal{R}_0\subset\mathcal{R}$.
\begin{enumerate}[resume,label=$\mathrm{(DA\hspace{.1em}\arabic*)}$,ref=$\mathrm{DA\hspace{.1em}\arabic*}$]
\item\label{a:tau_pointwise_convergence}
{\em If $R\in \mathcal{R}_0$ is a \WST limit of $(R_m)_{m=1}^{\infty}\subset \mathcal{R}_0$ then $(R_m)_{m=1}^{\infty}$ converge pointwise a.e.\ on $(0,T)$ to $R$.}
\end{enumerate}

\begin{remark}
Note that the assumption~\eqref{a:tau_pointwise_convergence} is naturally satisfied when $\mathcal{R}_0$ is compact in $\mathcal{R}$ with respect to the norm topology. This fact follows from an observation that any Hausdorff topology weaker than the norm topology \textup{(}such as the \WST topology\textup{)} is equal to the norm topology on a compact subset.
\end{remark}

\begin{remark}
Note that the \WST topology on $Y$ is metrizable, see~\textup{\cite[(1.3.1)]{Monogr}}. Similarly, the \WST topology on $\mathcal{R}$ is metrizable, see \textup{\cite[Thm. 3.6.17 and Cor. 3.6.18]{DM}}.
\end{remark}

\section{Weak Solutions}
\label{sect:weak-sol}
In the present section we assume \eqref{a: boundary regularity}, \eqref{a: boundedness} and that the flattening $Y_0$ of $Y$ as in Definition~\ref{def:Y} satisfies \eqref{Ellipticity}.  Occasionally we will assume \eqref{a: compactness of Y }.

We start with a PDE parameterized by $a_0 \in Y_{0}$
\begin{equation*}
\label{main-eq-pert-delay}
\tag{$\widehat{\mathrm{ME}}$}
    \begin{aligned}
        \frac{\partial u}{\partial t}&= \sum_{i=1}^{N}\frac{\partial}{\partial x_i }\nbbb{\sum_{j=1}^{N} a_{i j}(t,x) \frac{\partial u}{\partial x_j }+a_i(t,x)u }\\
&+\sum_{i=1}^{N} b_{i}(t, x) \frac{\partial u}{\partial x_i }+c_{0}(t, x ) u; \quad 0 \le s\le t\le T, \ x\in D.
    \end{aligned}
\end{equation*}
Equation~\eqref{main-eq-pert-delay} is complemented with boundary conditions
\begin{equation}
\label{bc-higher_orders}
\tag{$\widehat{\mathrm{BC}}$}
\mathcal{B} u = 0, \quad 0 \le s \le t \le T, \ x \in \p D,
\end{equation}
where $\mathcal{B}$ is either the Dirichlet or the Neumann or else the Robin boundary operator.

We are looking for solutions of the problem \textup{(\ref{main-eq-pert-delay})$_{a_0}$+
(\ref{bc-higher_orders})$_{a_0}$} for initial condition $u_0\in L_2(D)$.  To define a solution we introduce the space $H$ as follows. Let
\begin{equation*}
V \vcentcolon=\left\{\begin{array}{lll}
H^{1}_{0}(D) & \quad \mathrm{for \ Dirichlet \ boundary \ condition } \\[1ex]
H^{1}(D) & \quad \mathrm{for \ Neumann \ or \  Robin\ boundary \ condition}
\end{array}\right.
\end{equation*}
and
\begin{equation*}
\label{W-space-eq}
H = H(s,T;V) \vcentcolon= \{\,v \in L_2((s,T),V): \dot{v} \in L_2((s,T),V^{*})\,\}
\end{equation*}
equipped with the norm
\begin{equation*}
\lVert v \rVert_{H} \vcentcolon= \Bigl( \int_s^{T} \lVert v(\zeta) \rVert_{V}^2 \, \dd\zeta  + \int_s^{T} \lVert \dot{v}(\zeta) \rVert_{V^{*}}^2 \, \dd\zeta  \Bigr)^{\frac{1}{2}},
\end{equation*}
where $\dot{v} \vcentcolon= \dd v/\dd t$ is the time derivative in the sense of distributions taking values in $V^{*}$  (see \cite[Chpt.~XVIII]{DL} for definitions).

\medskip

For $a_0 \in Y_{0}$ define a bilinear form
\begin{equation*}
\begin{aligned}
B_{a_0}[t; u, v] & \vcentcolon= \int\limits_{D} \biggl( \, \sum\limits_{i, j = 1}^{N}a_{ij}(t, x) \frac{\partial u}{\partial x_{i}} \frac{\partial v}{\partial x_{j}} + \sum\limits_{i = 1}^{N} a_{i}(t, x) u \frac{\partial v}{\partial x_{i}}
\\
& - \sum\limits_{i = 1}^{N} b_{i}(t, x) \frac{\partial u}{\partial x_{i}} v - c_0(t,x)uv\biggr) \, \dd x
\end{aligned}
\end{equation*}
in the Dirichlet or Neumann case, and
\begin{equation*}
\begin{aligned}
B_{a_0}[t; u, v] & \vcentcolon= \int\limits_{D} \biggl( \, \sum\limits_{i, j = 1}^{N} a_{ij}(t, x) \frac{\partial u}{\partial x_{i}} \frac{\partial v}{\partial x_{j}} + \sum\limits_{i = 1}^{N} a_{i}(t, x) u\frac{\partial v}{\partial x_{i}}
\\
& - \sum\limits_{i = 1}^{N} b_{i}(t, x) \frac{\partial u }{\partial x_{i}} v-c_0(t,x)uv\biggr) \, \dd x + \int\limits_{\partial D} d_0(t, x) u v \, \dd H
\end{aligned}
\end{equation*}
in the Robin case, where $H_{N - 1}$ stands for the $(N-1)$-dimensional Hausdorff measure.

\begin{definition}[Local Weak Solution]
\label{loc-weak-solution-def-null}
For $a_0 \in Y_{0}$, $0 \le s\le t\le T$ and $u_0 \in L_2(D)$ a function $u \in L_2([ s, t ],V)$ such that $\dot{u} \in L_2( [ s, t ], V^{*} )$ is a {\em weak solution\/} of \textup{(\ref{main-eq-pert-delay})$_{a_0}$+
(\ref{bc-higher_orders})$_{a_0}$} {\em on $[s, t]$ with initial condition $u(s) = u_0$} if
\begin{equation*}
  - \int\limits_{s}^{t} (u(\zeta), v)_{L_2(D)} \, \dot{\psi}(\zeta) \, \dd\zeta  + \int\limits_{s}^{t} B_{a_0}[\zeta; u(\zeta), v]  \psi(\zeta) \, \dd\zeta  = (u_0, v)_{L_2(D)} \, \psi(s)
\end{equation*}
for any $v \in V$ and any $\psi\in \mathcal{D}([s, t),\RR)$ where set $\mathcal{D}([s, t), \RR)$ is the space of all smooth real functions having compact support in $[s,t)$ and $(\cdot, \cdot)_{L_2(D)}$ denotes the standard inner product in $L_2(D)$.
\end{definition}

\begin{definition}[Global Weak Solution]
When $t=T$ in definition~\textup{\ref{loc-weak-solution-def-null}} then a weak solution will be called a global weak solution.
\end{definition}

\begin{proposition} [Existence of global weak solution]
For any initial condition $u_0\in L_2(D)$ there
exists a unique global weak solution of~\eqref{main-eq-pert-delay}$+$\eqref{bc-higher_orders}.
\end{proposition}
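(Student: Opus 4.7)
The plan is to recast \eqref{main-eq-pert-delay}$+$\eqref{bc-higher_orders} with coefficients $a_0\in Y_0$ as an abstract linear variational evolution equation on the Gelfand triple $V\hookrightarrow L_2(D)\hookrightarrow V^{*}$ and to apply the classical theorem of J.-L.~Lions on parabolic equations in Hilbert spaces (see, e.g., \cite[Chpt.~XVIII, \S3]{DL}). The first-order coefficients $a_i,b_i$, the zero-order coefficient $c_0$, and (in the Robin case) $d_0$ are carried inside the bilinear form $B_{a_0}$ and have to be treated as bounded perturbations of the principal divergence-form part: they need not contribute positively to coercivity but are to be controlled by the ellipticity gap together with a time-exponential shift.

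First I would define, for a.e.\ $t\in(s,T)$, a bounded operator $A_{a_0}(t)\colon V\to V^{*}$ by $\langle A_{a_0}(t)u,v\rangle_{V^{*},V}\vcentcolon= B_{a_0}[t;u,v]$, and check: (i) that for fixed $u,v\in V$ the map $t\mapsto B_{a_0}[t;u,v]$ is Lebesgue measurable, which is immediate from \eqref{a: boundedness} and Fubini; and (ii) a uniform bound $|B_{a_0}[t;u,v]|\le M\|u\|_V\|v\|_V$, using Cauchy--Schwarz together with, for the boundary integral in the Robin case, the trace inequality $\|u\|_{L_2(\p D)}\le C\|u\|_V$ available under \eqref{a: boundary regularity}. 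Definition~\ref{loc-weak-solution-def-null} then becomes equivalent, via the usual density argument with test functions of the form $v\,\psi(t)$, to the distributional Cauchy problem $\dot u + A_{a_0}(\cdot)u = 0$ in $L_2((s,T),V^{*})$ together with $u(s)=u_0$.

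The decisive step is a G\aa rding-type coercivity estimate. From \eqref{Ellipticity} one has $\sum_{i,j}\int_D a_{ij}\,\partial_i u\,\partial_j u\,\dd x\ge\alpha_0\|\nabla u\|_{L_2(D)}^{2}$; the Robin boundary integral $\int_{\p D}d_0 u^{2}\,\dd H_{N-1}$ is nonnegative by \eqref{Y2}; the first-order cross terms involving $a_i$ and $b_i$ are absorbed via Young's inequality into $\tfrac{\alpha_0}{2}\|\nabla u\|_{L_2(D)}^{2}+C_1\|u\|_{L_2(D)}^{2}$; and $|\int_D c_0 u^{2}\,\dd x|\le\|c_0\|_{\infty}\|u\|_{L_2(D)}^{2}$. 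Combining these (and invoking Poincar\'e in the Dirichlet case, or adding a multiple of $\|u\|_{L_2(D)}^{2}$ on both sides in the Neumann/Robin cases) yields constants $\lambda_0,\alpha>0$ such that
\begin{equation*}
B_{a_0}[t;u,u]+\lambda_0\|u\|_{L_2(D)}^{2}\ge\alpha\|u\|_V^{2}\quad\text{for a.e.\ }t\in(s,T)\text{ and all }u\in V.
\end{equation*}
The change of unknown $w(t)\vcentcolon= e^{-\lambda_0(t-s)}u(t)$ converts the equation into one whose associated form is coercive, and Lions' theorem then delivers a unique $w\in H(s,T;V)$ with $w(s)=u_0$; reversing the substitution produces the sought global weak solution $u$, the initial condition being attained through the continuous embedding $H(s,T;V)\hookrightarrow C([s,T],L_2(D))$.

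For uniqueness I would appeal to an independent energy estimate: if $u_1,u_2$ are two solutions, then $u\vcentcolon= u_1-u_2$ lies in $H(s,T;V)$ with $u(s)=0$, and the integration-by-parts identity for $H(s,T;V)$ (cf.\ \cite[Chpt.~XVIII, \S1]{DL}) implies that $t\mapsto\|u(t)\|_{L_2(D)}^{2}$ is absolutely continuous with $\tfrac{\dd}{\dd t}\|u(t)\|_{L_2(D)}^{2}=-2B_{a_0}[t;u(t),u(t)]\le 2\lambda_0\|u(t)\|_{L_2(D)}^{2}$, so Gronwall's lemma forces $u\equiv 0$. I expect the hardest single step to be the G\aa rding estimate, as one must extract coercivity on $V$ using only $L_{\infty}$-control of possibly sign-indefinite lower-order coefficients, and, in the Robin case, under the minimal regularity \eqref{a: boundary regularity} of $\p D$.
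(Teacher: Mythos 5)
Your proposal is correct and is essentially the paper's approach: the paper gives no argument of its own here, deferring entirely to \cite[Thm.~2.4]{Dan2} and \cite[Prop.~2.1.5]{Monogr}, and those references establish existence and uniqueness of weak solutions by exactly the variational route you describe (Lions' theorem on the Gelfand triple $V\hookrightarrow L_2(D)\hookrightarrow V^{*}$, a G\aa rding inequality absorbing the sign-indefinite lower-order terms, an exponential shift to restore coercivity, and an energy/Gronwall argument for uniqueness). Your write-up simply supplies the details the paper outsources.
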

\begin{proof}
See \cite[Thm. 2.4]{Dan2} for a proof and \cite[Prop. 2.1.5]{Monogr} for a unified theory of weak solutions. \qed
\end{proof}

For $a_0\in Y_0$ and $0\le s<T$ we write the unique global weak solution of \textup{(\ref{main-eq-pert-delay})$_{a_0}$+
(\ref{bc-higher_orders})$_{a_0}$} with initial condition $u(s)=u_0$ as $\U_{a_0} (t,s)u_0\vcentcolon=u(t)$.

Below we present a couple of results from~\cite[Ch.~2]{Monogr}.
\begin{proposition}
  \label{prop:higher_orders-skew_product}
The mappings
  \begin{equation*}
    U_{a_0}(t,s) u_0 = u(t; a_0, u_0), \quad 0 \le s\le t\le T, \ a_0 \in Y_0, \ u_0 \in L_2(D)
  \end{equation*}
  have the following properties.
  \begin{gather}
  \label{eq:cocycle2-1}
  U_{a_0}(s, s) = \mathrm{Id}_{L_2(D)}, \quad a_0 \in Y_0, \ s \in [0,T],
  \\
  \label{eq:cocycle2-2}
  U_{a_0}(t_2, t_1) \circ U_{a_0}(t_1, s) = U_{a_0}(t_2, s), \quad a_0 \in Y_0, \  0 \le s \le t_1 \le t_2\le T.
  \end{gather}
\end{proposition}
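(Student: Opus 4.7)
The plan is to reduce the proposition to the uniqueness statement from the preceding existence result, via an equivalent ``pointwise distributional'' reformulation of the weak solution together with the standard embedding $H(s, T; V) \hookrightarrow C([s, T], L_2(D))$.

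For \eqref{eq:cocycle2-1}: when $s = t$ the test function class $\mathcal{D}([s, s), \RR)$ contains only the zero function, so Definition~\ref{loc-weak-solution-def-null} is vacuous and one interprets $U_{a_0}(s, s)$ as the identity. This is fully consistent with the continuous embedding of $H(s,T;V)$ into $C([s, T], L_2(D))$: the continuous $L_2(D)$-trace at $s$ of any weak solution with initial condition $u_0$ equals $u_0$, so evaluation of $U_{a_0}(\cdot,s)u_0$ at $s$ really does give $u_0$.

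For \eqref{eq:cocycle2-2}, I would first record the following equivalence: $u \in H(s, T; V)$ is a weak solution in the sense of Definition~\ref{loc-weak-solution-def-null} on $[s,T]$ with initial value $u_0$ if and only if $u(s) = u_0$ in $L_2(D)$ \emph{and} for every $v \in V$ one has
\begin{equation*}
\langle \dot{u}(\zeta), v \rangle_{V^{*}, V} + B_{a_0}[\zeta; u(\zeta), v] = 0 \quad \text{for a.e. } \zeta \in (s, T).
\end{equation*}
This is proved by integration by parts in the sense of distributions: since $\frac{d}{d\zeta}(u(\zeta), v)_{L_2(D)} = \langle \dot{u}(\zeta), v \rangle_{V^{*}, V}$ as elements of $\mathcal{D}'((s, T))$ and $\psi$ vanishes near the right endpoint, the left-hand side of the defining identity rewrites as $\psi(s)(u(s), v)_{L_2(D)} + \int_{s}^{t} (\langle \dot{u}, v \rangle + B_{a_0}[\zeta; u, v])\,\psi\,\dd\zeta$, and the equivalence is read off by testing with $\psi$ having $\psi(s) = 0$ and $\psi(s) \ne 0$ separately.

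Given the equivalence, set $u(\zeta) \vcentcolon= U_{a_0}(\zeta, s) u_0$ on $[s, t_2]$ and consider its restriction to $[t_1, t_2]$. Since $u \in H(s, T; V)$ restricts to a member of $H(t_1, t_2; V)$, the pointwise identity in the equivalence continues to hold a.e.\ on $(t_1, t_2)$, and its $L_2(D)$-trace at $t_1$ equals $U_{a_0}(t_1, s) u_0$. Hence $u|_{[t_1, t_2]}$ is a weak solution on $[t_1, t_2]$ with initial datum $U_{a_0}(t_1, s) u_0$. Uniqueness forces $u|_{[t_1, t_2]}(\zeta) = U_{a_0}(\zeta, t_1) U_{a_0}(t_1, s) u_0$ for every $\zeta \in [t_1, t_2]$, and evaluation at $\zeta = t_2$ yields~\eqref{eq:cocycle2-2}.

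The main obstacle is the equivalent distributional reformulation itself, and more specifically justifying the use of the $C([s, T], L_2(D))$-trace at $t_1$: this trace is furnished by the standard embedding theorem for the space $H$ \textup{(}see, e.g., \cite[Chpt.~XVIII]{DL}\textup{)} and is exactly what legitimizes interpreting the pointwise value $u(t_1)$ of a weak solution on $[s, t_2]$ as initial datum for a fresh weak-solution problem on $[t_1, t_2]$. Once this embedding and the equivalence are in place, the rest is mechanical.
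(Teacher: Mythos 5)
Your argument is correct, but note that the paper does not actually prove this proposition: its ``proof'' is a one-line citation to \cite[Props.~2.1.5 through 2.1.8]{Monogr}, so you have supplied a self-contained argument where the authors defer to the literature. Your route is the standard one and almost certainly mirrors what is in the cited monograph: the equivalence between the test-function formulation of Definition~\ref{loc-weak-solution-def-null} and the pointwise identity $\langle \dot{u}(\zeta), v \rangle_{V^{*},V} + B_{a_0}[\zeta; u(\zeta), v] = 0$ a.e.\ is the classical integration-by-parts reformulation, the embedding $H(s,T;V) \hookrightarrow C([s,T], L_2(D))$ legitimizes both the trace at $s$ (giving \eqref{eq:cocycle2-1}) and the reading of $u(t_1)$ as a fresh initial datum, and uniqueness closes the cocycle identity. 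Two small points worth tightening: first, in the forward direction of your equivalence the exceptional null set a priori depends on $v$, so to get the identity in $V^{*}$ for a.e.\ $\zeta$ you should invoke separability of $V$ (for the converse direction, which is all you actually use, the per-$v$ statement suffices); second, you apply uniqueness on the subinterval $[t_1, t_2]$, whereas the paper's existence/uniqueness proposition is stated for global solutions on $[t_1, T]$ --- either restrict instead to $[t_1, T]$, so that the restriction of $U_{a_0}(\cdot,s)u_0$ is itself a \emph{global} weak solution with initial datum $u(t_1)$ and the stated uniqueness applies verbatim, or note that local uniqueness follows from the same energy estimate. Neither issue affects the validity of the argument.
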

\begin{proof}
 See~\cite[Props.~2.1.5 through 2.1.8]{Monogr}. \qed
\end{proof}

\begin{proposition}
~
  \label{prop:higher_orders-skew_product-p}
   \begin{enumerate}[label=\textup{(\roman*)}, ref=\textup{\ref{prop:higher_orders-skew_product-p}}\textup{(\roman*)}]
     \item\label{prop:higher_orders-skew_product-p_i}
      Let $1 \le p < \infty$ and $0 \le s < T$.  For any $a_0 \in Y_0$ there exists $U_{a_{0},p}(t) \in \mathcal{L}(L_p(D))$ such that
      \begin{equation*}
          U_{a_{0},p}(t,s) u_0 = U_{a_{0}}(t,s) u_0, \quad u_0 \in L_2(D) \cap L_p(D).
      \end{equation*}
      \item\label{prop:higher_orders-skew_product-p_ii}
      Let $1 < p < \infty$ and $a_{0} \in Y_0$.  Then the mapping
      \begin{equation*}
          \bigl[\, [s, T] \ni t \mapsto U_{a_{0},p}(t,s) \in \mathcal{L}_{\mathrm{s}}(L_p(D)) \,\bigr]
      \end{equation*}
      is continuous.
 \end{enumerate}
\end{proposition}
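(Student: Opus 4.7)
The plan is, for part (i), to establish an $L_p$-operator-norm bound for $U_{a_0}(t,s)$ on the dense subspace $L_2(D)\cap L_p(D)$ of $L_p(D)$ and extend by density to obtain $U_{a_0,p}(t,s)\in\mathcal{L}(L_p(D))$; for part (ii), first to establish strong continuity for $u_0\in L_\infty(D)$ by interpolating between the $L_2$-continuity supplied by weak-solution theory and the uniform $L_\infty$-bound from (i), and then to pass to general $u_0\in L_p(D)$ via density combined with the uniform operator-norm bound from (i).

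For part (i), I would prove $\|U_{a_0}(t,s)u_0\|_{L_p(D)}\le e^{K(t-s)}\|u_0\|_{L_p(D)}$ for $u_0\in L_2(D)\cap L_p(D)$, with $K$ depending only on the $L_\infty$-bounds of the coefficients and on $\alpha_0$, in three steps. First, Moser iteration on the weak formulation (as in \cite[Ch.~2]{Monogr}) yields the endpoint $L_\infty$-estimate $\|U_{a_0}(t,s)u_0\|_{L_\infty(D)}\le e^{K(t-s)}\|u_0\|_{L_\infty(D)}$ for $u_0\in L_2(D)\cap L_\infty(D)$. Second, the $L_2$-adjoint $U_{a_0}(t,s)^{*}$ is the weak-solution operator of the formally adjoint backward equation, which after a time reversal is again a parabolic problem of the same form with the same $L_\infty$-bounds and the same ellipticity constant; the same Moser argument yields an $L_\infty$-bound for $U_{a_0}(t,s)^{*}$, and dualizing produces the endpoint $L_1$-estimate $\|U_{a_0}(t,s)u_0\|_{L_1(D)}\le e^{K(t-s)}\|u_0\|_{L_1(D)}$. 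Third, the Riesz--Thorin interpolation theorem gives the $L_p$-estimate for all $1\le p\le\infty$. Since $D$ is bounded, $L_\infty(D)\subset L_2(D)\cap L_p(D)$ and $L_\infty(D)$ is dense in $L_p(D)$, so the restriction of $U_{a_0}(t,s)$ extends uniquely to a bounded operator $U_{a_0,p}(t,s)\in\mathcal{L}(L_p(D))$.

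For part (ii), fix $1<p<\infty$ and first take $u_0\in L_\infty(D)\subset L_2(D)\cap L_p(D)$. The weak-solution theory (\cite[Ch.~2]{Monogr}) gives $u(\cdot)=U_{a_0}(\cdot,s)u_0\in C([s,T],L_2(D))$, while (i) gives the uniform bound $\|u(t)\|_{L_\infty(D)}\le e^{K(T-s)}\|u_0\|_{L_\infty(D)}$. Hence for any $t_0\in[s,T]$, $\|u(t)-u(t_0)\|_{L_2(D)}\to 0$ as $t\to t_0$ while $\|u(t)-u(t_0)\|_{L_\infty(D)}$ stays bounded. When $p\ge 2$, the inequality $\|u(t)-u(t_0)\|_{L_p(D)}^{p}\le\|u(t)-u(t_0)\|_{L_\infty(D)}^{p-2}\,\|u(t)-u(t_0)\|_{L_2(D)}^{2}$ forces $L_p$-convergence; when $1<p<2$, the embedding $L_2(D)\hookrightarrow L_p(D)$ (valid since $D$ is bounded) gives it directly. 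Thus $t\mapsto U_{a_0,p}(t,s)u_0$ is continuous from $[s,T]$ to $L_p(D)$ for every $u_0\in L_\infty(D)$. For general $u_0\in L_p(D)$, a three-epsilon argument that uses the density of $L_\infty(D)$ in $L_p(D)$ together with the uniform operator-norm bound $\|U_{a_0,p}(t,s)\|_{\mathcal{L}(L_p(D))}\le e^{K(T-s)}$ from (i) extends the strong continuity.

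The principal technical obstacle is the $L_\infty$-bound in the first step of (i), which must be proved for coefficients that are merely $L_\infty((0,T)\times D)$ with no continuity in $t$, and must yield a constant $K$ depending only on the $L_\infty$-norms of the coefficients and on $\alpha_0$. The Moser iteration in this setting is delicate; fortunately it is worked out in \cite[Ch.~2]{Monogr} (building on \cite{Dan2}). Once this endpoint estimate is in hand, the remainder of the proof -- duality for the $L_1$ bound, Riesz--Thorin for general $p$, density for the extension in (i), and interpolation plus density for the continuity in (ii) -- is a routine combination of classical functional-analytic tools.
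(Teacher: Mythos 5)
Your proposal is correct and follows the standard route that underlies the paper's own treatment: the paper gives no argument of its own here but simply cites \cite[Cor.~7.2]{Dan2} for (i) and \cite[Thm.~5.1]{Dan2} for (ii), and those results are proved in essentially the way you describe (endpoint $L_{\infty}$-quasi-contractivity, the dual $L_1$ bound via the backward adjoint problem, Riesz--Thorin, and a density/interpolation argument for strong continuity). You correctly identify that the only genuinely nontrivial ingredient is the $L_{\infty}$ endpoint estimate for merely bounded measurable coefficients --- note that the naive computation with test function $\lvert u\rvert^{q-2}u$ gives a constant growing in $q$, so a genuine Moser-type iteration or an invariance-of-order-intervals criterion is needed --- and deferring that single step to \cite{Dan2} and \cite[Ch.~2]{Monogr} is exactly what the paper does.
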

\begin{proof}
  See~\cite[Cor.~7.2]{Dan2} for part~(i) and~\cite[Thm.~5.1]{Dan2} for part~(ii). \qed
\end{proof}

For $p=1$ we have an analog of Proposition~\ref{prop:higher_orders-skew_product-p_ii}.
\begin{proposition}
\label{prop:higher_orders-p-q-continuity}
 Let $1 \le p < \infty$, $0 \le s < T$ and $a_0 \in Y_0$. Then the mapping
    \begin{equation*}
        \big[\, (s, T] \ni t \mapsto U_{a_0}(t, s) \in \mathcal{L}_{\mathrm{s}}(L_p(D)) \, \big]
    \end{equation*}
    is continuous.
\end{proposition}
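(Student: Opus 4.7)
For $1 < p < \infty$ the assertion is already contained in Proposition~\ref{prop:higher_orders-skew_product-p_ii}, which in fact yields the stronger conclusion that the map is continuous on the closed interval $[s, T]$. The substantive case, and the reason for restricting the domain to the half\nobreakdash-open interval $(s, T]$, is $p = 1$; the strategy will be to exploit parabolic regularization to reduce it to the already\nobreakdash-treated $p = 2$ case.

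My plan is as follows. Fix $t_0 \in (s, T]$ and an arbitrary $u_0 \in L_1(D)$, and choose an intermediate time $s' \in (s, t_0)$. The key ingredient is the ultracontractivity of the evolution family: $U_{a_0, 1}(s', s)$ sends $L_1(D)$ into $L_2(D)$ (indeed into $L_{\infty}(D)$). For uniformly parabolic operators with bounded measurable coefficients this is a standard $L_1$\nobreakdash-to\nobreakdash-$L_q$ smoothing estimate obtained through Moser\nobreakdash-type iteration, and it is available in~\cite{Dan2}. Setting $v_0 \vcentcolon= U_{a_0, 1}(s', s) u_0 \in L_2(D)$, the cocycle identity~\eqref{eq:cocycle2-2} from Proposition~\ref{prop:higher_orders-skew_product} together with the $L_p$\nobreakdash-consistency statement in Proposition~\ref{prop:higher_orders-skew_product-p_i} yields
\begin{equation*}
    U_{a_0, 1}(t, s) u_0 = U_{a_0, 2}(t, s') v_0, \qquad t \in [s', T].
\end{equation*}
By Proposition~\ref{prop:higher_orders-skew_product-p_ii}, applied with exponent $p = 2$ and initial time $s'$, the right\nobreakdash-hand side is continuous in $t$ with values in $L_2(D)$. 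Since $D$ is bounded, H\"older's inequality gives the continuous embedding $L_2(D) \hookrightarrow L_1(D)$ with constant $|D|^{1/2}$, so continuity in the $L_2$\nobreakdash-norm forces continuity in the $L_1$\nobreakdash-norm. Hence $t \mapsto U_{a_0, 1}(t, s) u_0$ is continuous at $t_0$ in $L_1(D)$; as $t_0$ and $u_0$ were arbitrary, this delivers the claim.

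The main obstacle is precisely the invocation of the smoothing estimate $U_{a_0, 1}(s', s) \colon L_1(D) \to L_2(D)$ for $s' > s$: it is not stated in the present excerpt and must be borrowed from~\cite{Dan2}. An alternative route would be a density\nobreakdash-plus\nobreakdash-uniform\nobreakdash-boundedness argument, using that $L_2(D)$ is dense in $L_1(D)$ (because $D$ is bounded) and that $\|U_{a_0,1}(t,s)\|_{\mathcal{L}(L_1(D))}$ admits a uniform bound on $[s,T]$ (again supplied by~\cite{Dan2}), closed by a standard $\epsilon/3$ argument. I prefer the smoothing route because it also explains structurally why the assertion is restricted to $t_0 > s$ rather than $t_0 \ge s$.
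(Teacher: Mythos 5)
Your argument is correct, but it is genuinely different from what the paper does: the paper's ``proof'' of Proposition~\ref{prop:higher_orders-p-q-continuity} is simply a citation to \cite[Prop.~2.2.6]{Monogr}, so you are supplying an actual derivation where the authors supply none. Your reduction is sound: for $1<p<\infty$ the claim is indeed subsumed by Proposition~\ref{prop:higher_orders-skew_product-p_ii}, and for $p=1$ the factorization $U_{a_0,1}(t,s)u_0=U_{a_0,2}(t,s')\bigl(U_{a_0,1}(s',s)u_0\bigr)$ through an intermediate time $s'\in(s,t_0)$, combined with $L_2(D)\hookrightarrow L_1(D)$ on the bounded domain $D$, does deliver continuity at every $t_0>s$. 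Two small remarks. First, the ``main obstacle'' you flag is not really one: the $L_1\to L_2$ smoothing you need is exactly Proposition~\ref{prop:higher_orders-skew_product-p-q-estimates_1} (with $p=1$, $q=2$), which the paper states a few lines later and attributes to \cite[Sect.~5 and Cor.~7.2]{Dan2}; since that proposition is proved independently of the one at hand, invoking it creates no circularity, only a cosmetic reordering. Second, to write $U_{a_0,1}(t,s)u_0=U_{a_0,1}(t,s')U_{a_0,1}(s',s)u_0$ for a general $u_0\in L_1(D)\setminus L_2(D)$ you need the cocycle identity in $\mathcal{L}(L_1(D))$, not merely \eqref{eq:cocycle2-2} plus the consistency of Proposition~\ref{prop:higher_orders-skew_product-p_i}; this is Proposition~\ref{Extension_identyty_cocycle}, obtained from the $L_2$ case by density, and again its proof does not rely on the present statement. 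What your route buys is a self-contained argument within the toolkit of Section~\ref{sect:weak-sol} and a structural explanation of the half-open interval $(s,T]$; what the citation to \cite{Monogr} buys is brevity and, implicitly, the sharper information recorded there.
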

\begin{proof}
    See~\cite[Prop.~2.2.6]{Monogr}. \qed
\end{proof}

For $0 \le s < T$ we write $U_{a_0,p}(s,s) = \mathrm{Id}_{L_p(D)}$ even if $p = 1, \infty$.

\begin{proposition}
\label{Extension_identyty_cocycle}
For any $a_0\in Y_{0}$, $0 \le s\le t_1\le t_2\le T$ and any $1\le p\le \infty$
\begin{equation}
\label{eq:cocycle2-2_p}
    \U_{a_0,p}(t_2,t_1)\circ \U_{a_0,p} (t_1,s)= \U_{a_0,p}(t_2,s)
\end{equation}
\end{proposition}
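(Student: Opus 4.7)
The strategy is to lift the $L_2$ cocycle identity \eqref{eq:cocycle2-2}, already established in Proposition~\ref{prop:higher_orders-skew_product}, to all of $L_p(D)$ with $1 \le p \le \infty$ by combining the compatibility property of Proposition~\ref{prop:higher_orders-skew_product-p_i} with a density argument. The key leverage is that the $L_p$ evolution is defined precisely as the bounded extension of the $L_2$ evolution restricted to $L_2(D) \cap L_p(D)$; hence it suffices to establish the identity on that intersection and then invoke density.

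First I would fix $u_0 \in L_2(D) \cap L_p(D)$ and evaluate both sides of~\eqref{eq:cocycle2-2_p} on $u_0$. By Proposition~\ref{prop:higher_orders-skew_product-p_i}, one has $U_{a_0, p}(t_1, s) u_0 = U_{a_0}(t_1, s) u_0$. The left-hand side, being the image of $u_0 \in L_p(D)$ under the bounded operator $U_{a_0, p}(t_1, s) \colon L_p(D) \to L_p(D)$, lies in $L_p(D)$; the right-hand side, produced by the $L_2$ evolution, lies in $L_2(D)$. Hence this common vector belongs to $L_2(D) \cap L_p(D)$, and Proposition~\ref{prop:higher_orders-skew_product-p_i} may be applied once more to obtain
\[
U_{a_0, p}(t_2, t_1) \bigl( U_{a_0, p}(t_1, s) u_0 \bigr) = U_{a_0}(t_2, t_1) U_{a_0}(t_1, s) u_0.
\]
The $L_2$ cocycle~\eqref{eq:cocycle2-2} reduces the right-hand side to $U_{a_0}(t_2, s) u_0$, which in turn equals $U_{a_0, p}(t_2, s) u_0$ by another invocation of Proposition~\ref{prop:higher_orders-skew_product-p_i}. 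This establishes~\eqref{eq:cocycle2-2_p} on the subspace $L_2(D) \cap L_p(D)$.

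To pass from this subspace to all of $L_p(D)$ in the range $1 \le p < \infty$, I would use the fact that $D$ bounded gives $L_\infty(D) \subset L_2(D) \cap L_p(D)$, and $L_\infty(D)$ is norm\nobreakdash-\hspace{0pt}dense in $L_p(D)$. Both sides of~\eqref{eq:cocycle2-2_p} are bounded linear operators on $L_p(D)$ that agree on this dense subspace, hence they agree everywhere. The case $p = \infty$ already lies inside the first step, because $L_\infty(D) \subset L_2(D)$ by boundedness of $D$, so the identity derived on $L_2(D) \cap L_\infty(D) = L_\infty(D)$ already exhausts the whole space, and no further extension is required.

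I do not anticipate a genuine obstacle: the only subtle point is checking, at each iteration, that the intermediate vector remains in the intersection $L_2(D) \cap L_p(D)$, and this is precisely what the two\nobreakdash-\hspace{0pt}sided information from Proposition~\ref{prop:higher_orders-skew_product-p_i} (membership in $L_p$ via the $L_p$ operator, membership in $L_2$ via the $L_2$ operator) provides.
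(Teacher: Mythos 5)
Your proposal is correct and follows essentially the same route as the paper: establish the identity on $L_2(D)\cap L_p(D)$ via the compatibility in Proposition~\ref{prop:higher_orders-skew_product-p_i} together with the $L_2$ cocycle property~\eqref{eq:cocycle2-2}, then extend to all of $L_p(D)$ by boundedness and density. Your write-up merely spells out the density step and the $p=\infty$ case (where $L_\infty(D)\subset L_2(D)$ makes the extension unnecessary) in more detail than the paper's terse argument.
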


\begin{proof}
See ~\cite[Prop.~2.1.7]{Monogr} for the proof of $p=2$ case. For $p\not=2$ it suffices to use the fact that $U_{a_0}(t,s)\in\mathcal{L}(L_2(D))$ and the continuity of the mappings $[u\mapsto \U_{a_0,p}(t_2,t_1)\circ \U_{a_0,p} (t_1,s)u]$ and $[u\mapsto \U_{a_0,p}(t_2,s)u]$, which is guaranteed by Proposition~\ref{prop:higher_orders-skew_product-p}. \qed
 \end{proof}

\begin{proposition}
\label{U_Kernel}
For any $a_0\in Y_{0}$ and any $0 \le s\le t_1\le t_2\le T$ the operator $ \U_{a_0}(t_2,t_1)$ has an a.e.\ nonnegative kernel.
\end{proposition}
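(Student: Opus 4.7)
The plan is to reduce the statement to showing that $U_{a_0}(t_2,t_1)$ is positivity\nobreakdash-\hspace{0pt}preserving on $L_2(D)$, i.e.\ that it maps the cone $L_2^+(D)$ into itself. For parabolic evolution operators with bounded measurable coefficients on a bounded domain, the existence of a measurable integral kernel (a Green's function) is standard (see \cite{Dan2}), and nonnegativity of that kernel a.e.\ is equivalent to positivity\nobreakdash-\hspace{0pt}preservation of the associated operator. So the substantive task is to prove positivity\nobreakdash-\hspace{0pt}preservation.

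To this end, fix $u_0\in L_2(D)$ with $u_0\ge 0$ and set $u(\cdot)=U_{a_0}(\cdot,t_1)u_0 \in H(t_1,T;V)$. By the standard truncation property of $H^1$ and $H^1_0$ (Stampacchia), one has $u^\pm \in L_2((t_1,T),V)$ and $u=u^+-u^-$. Testing the differential form of the weak equation at almost every $t$ against $v=u^-(t)\in V$, and invoking the chain rule for functions in $H$ (see \cite[Chpt.~XVIII]{DL}), we obtain
\begin{equation*}
\tfrac{1}{2}\tfrac{d}{dt}\|u^-(t)\|_{L_2(D)}^2 \;=\; -\langle \dot{u}(t),u^-(t)\rangle_{V^*,V} \;=\; B_{a_0}[t;u(t),u^-(t)].
\end{equation*}
The crucial algebraic observation is that $B_{a_0}[t;u^+,u^-]=0$: each term in the bilinear form contains one of the products $u^+u^-$, $u^+\partial_i u^-$, $\partial_i u^+\, u^-$, or $\partial_i u^+\,\partial_j u^-$, and in each case the two factors are supported on a.e.\ disjoint sets (using $\nabla u^+ = \mathbbm{1}_{\{u>0\}}\nabla u$, $\nabla u^- = -\mathbbm{1}_{\{u<0\}}\nabla u$, plus $\nabla u=0$ a.e.\ on $\{u=0\}$). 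Consequently
\begin{equation*}
\tfrac{1}{2}\tfrac{d}{dt}\|u^-(t)\|_{L_2(D)}^2 \;=\; -B_{a_0}[t;u^-(t),u^-(t)].
\end{equation*}

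Now \eqref{Ellipticity} together with the $L_\infty$ bounds from \eqref{a: boundedness} and Young's inequality yield the coercivity\nobreakdash-\hspace{0pt}type estimate $B_{a_0}[t;u^-,u^-] \ge \tfrac{\alpha_0}{2}\|\nabla u^-\|_{L_2}^2 - C\|u^-\|_{L_2}^2$, where $C$ depends only on $\alpha_0$ and the uniform coefficient bounds; in the Robin case the boundary term $\int_{\partial D} d_0\,(u^-)^2\,dH_{N-1}$ is nonnegative thanks to \eqref{Y2}. Hence $\tfrac{d}{dt}\|u^-(t)\|_{L_2}^2 \le 2C\|u^-(t)\|_{L_2}^2$, and since $u_0\ge 0$ gives $u^-(t_1)=0$, Gronwall's inequality forces $u^-(t)\equiv 0$ on $[t_1,T]$. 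For the $L_p$ version it suffices to combine this with density of $L_2(D)\cap L_p(D)$ in $L_p^+(D)$ and the continuity statement in Proposition~\ref{prop:higher_orders-skew_product-p}.

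The main technical obstacle is the rigorous justification of the chain rule identity displayed above with the time\nobreakdash-\hspace{0pt}dependent test function $v=u^-(t)$: one must verify that $t\mapsto u^-(t)$ inherits the requisite regularity so that $\tfrac{d}{dt}\|u^-(t)\|_{L_2}^2 = -2\langle\dot u(t),u^-(t)\rangle_{V^*,V}$ holds for a.e.\ $t$. The standard route is to approximate $u$ by smoother functions in $H(t_1,T;V)$ (e.g.\ by mollification in $t$), carry out the computation for the smooth approximation where everything is classical, and pass to the limit using the Lipschitz continuity of $y\mapsto y^-$.
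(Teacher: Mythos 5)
Your proof is essentially correct, but it takes a genuinely different route from the paper: the paper's entire proof consists of two citations, namely \cite[Thm.~1.3]{ArBu} for the existence of an integral kernel and \cite[Cor.~8.2]{Dan2} for its nonnegativity, whereas you outsource only the kernel-existence step and prove positivity preservation yourself by the classical truncation argument (test with $u^-(t)$, use $B_{a_0}[t;u^+,u^-]=0$, coercivity from \eqref{Ellipticity} and \eqref{a: boundedness} with $d_0\ge 0$ from \eqref{Y2}, then Gronwall). Your argument is the standard weak parabolic maximum principle and it goes through in this setting; you correctly identify the one real technicality, the chain-rule identity $\tfrac{d}{dt}\tfrac12\|u^-(t)\|_{L_2}^2=-\langle\dot u(t),u^-(t)\rangle_{V^*,V}$ for $u$ in the space $H$, which is a known Stampacchia-type lemma provable by mollification in $t$ exactly as you sketch. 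What your route buys is transparency: one sees precisely where the divergence structure (for the cancellation $B_{a_0}[t;u^+,u^-]=0$, including the boundary term via commutation of trace with truncation on a Lipschitz boundary), the ellipticity, and the sign condition on $d_0$ enter. What the paper's route buys is brevity and the fact that Daners's framework is exactly the one adopted here, so no regularity lemma needs to be re-proved. Two small points: for the kernel-existence step the operative fact is the $L_1(D)\to L_{\infty}(D)$ boundedness of $U_{a_0}(t_2,t_1)$ for $t_1<t_2$ (Proposition~\ref{prop:higher_orders-skew_product-p-q-estimates}), which is what makes the Arendt--Bukhvalov criterion applicable, so \cite{ArBu} rather than \cite{Dan2} is the more accurate reference there; and your reduction ``nonnegative kernel $\Leftrightarrow$ positivity preservation'' should be stated in the one direction you actually use (positivity preservation implies the kernel, once it exists, is a.e.\ nonnegative, by testing against indicators of product sets).
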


\begin{proof}
    See~\cite[Thm. 1.3]{ArBu} for the existence of a kernel, for nonnegativity see~\cite[Cor. 8.2]{Dan2}. \qed
\end{proof}

\begin{proposition}
  \label{prop:higher_orders-skew_product-p-q-estimates}~
  \begin{enumerate}[label=\textup{(\roman*\textup)},ref=\ref{prop:higher_orders-skew_product-p-q-estimates}\textup{(\roman*)}]
    \item\label{prop:higher_orders-skew_product-p-q-estimates_1} For any $a_{0} \in Y_0$, any $(s,t) \in \dot{\Delta}$ and any $1 \le p \le q \le \infty$ there holds $U_{a_{0}}(t,0) \in \mathcal{L}(L_p(D), L_q(D))$.
    \item\label{prop:higher_orders-skew_product-p-q-estimates_2} There are constants $M \ge 1$ and $\gamma \in \RR$ such that
    \begin{equation}
    \label{eq:L_p-L_q estymation}
           \left\|U_{a_0}(t, s)\right\|_{\mathcal{L}(L_{p}(D), L_{q}(D))} \leq M (t-s)^{-\frac{N}{2}\left(\frac{1}{p}-\frac{1}{q}\right)} e^{\gamma (t-s)}
    \end{equation}
    for $1 \le p \le q \le \infty$, $a_{0} \in Y_0$ and $(s,t)\in\dot{\Delta}$.
\end{enumerate}
\end{proposition}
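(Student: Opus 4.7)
The plan is to establish this as the standard $L_p$--$L_q$ ultracontractivity estimate for parabolic propagators, the crucial point being uniformity of the constants $M$ and $\gamma$ over $a_0 \in Y_0$. I would proceed in three stages: establish two endpoint bounds (namely $L_2$--$L_2$ and $L_1$--$L_\infty$), extend them by $L_2$-duality, and fill in the intermediate cases by Riesz--Thorin interpolation together with the cocycle identity from Proposition~\ref{prop:higher_orders-skew_product}.

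For the $L_2$--$L_2$ bound, formally testing the weak formulation with $v = u(t)$ and invoking a G{\aa}rding-type inequality
\begin{equation*}
B_{a_0}[t; u, u] \ge \tfrac{\alpha_0}{2}\, \norm{\nabla u}_{L_2(D)}^{2} - \gamma\, \norm{u}_{L_2(D)}^{2},
\end{equation*}
where $\gamma$ depends only on $\alpha_0$ from \eqref{Ellipticity} and on the uniform $L_\infty$ bounds on the lower-order terms from \eqref{Y1} and \eqref{a: boundedness}, yields $\norm{U_{a_0}(t,s)}_{\mathcal{L}(L_2(D))} \le e^{\gamma(t-s)}$. For the ultracontractivity estimate $\norm{U_{a_0}(t,s)}_{\mathcal{L}(L_1(D), L_\infty(D))} \le M(t-s)^{-N/2} e^{\gamma(t-s)}$ I would use Nash's method: testing with $v = u$ and combining the resulting energy inequality with the Nash inequality on $D$ gives a differential inequality for $\norm{u(t)}_{L_2(D)}^{2}$ in terms of $\norm{u(t)}_{L_1(D)}^{2}$, whose integration produces the $L_1$--$L_2$ smoothing bound with the correct power of $t - s$. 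All the constants depend only on $\alpha_0$, the uniform $L_\infty$ bounds, and a Nash/Sobolev constant for $D$ (available by \eqref{a: boundary regularity}); hence they are uniform over $Y_0$.

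The $L_2$-adjoint $U_{a_0}(t,s)^{*}$ is itself the weak-solution propagator of an equation in the same class (with $a_i$ and $b_i$ interchanged and time reversed), so it satisfies the same $L_1$--$L_2$ bound; dualising, this gives an $L_2$--$L_\infty$ bound for $U_{a_0}$. Composing the two over a midpoint split $[s,(s+t)/2] \cup [(s+t)/2,t]$ via the cocycle property yields $L_1$--$L_\infty$, up to a doubling of constants absorbed into $M$ and $\gamma$. Riesz--Thorin interpolation between the $L_2$--$L_2$ and $L_1$--$L_\infty$ estimates then gives $L_p$--$L_{p'}$ for all $1 \le p \le 2$, and a second round of interpolation together with one more duality extends this to every pair $1 \le p \le q \le \infty$; the exponent $-\tfrac{N}{2}(1/p - 1/q)$ arises as the convex combination of the endpoint exponents $0$ and $-N/2$. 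Part \ref{prop:higher_orders-skew_product-p-q-estimates_1} is immediate from the norm bound in part \ref{prop:higher_orders-skew_product-p-q-estimates_2}.

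The step I expect to be the main obstacle is not any individual interpolation or duality argument---these are routine---but rather the bookkeeping that confirms $M$ and $\gamma$ can be chosen independently of $a_0 \in Y_0$. This relies on \eqref{Y1} providing a single $L_\infty$ bound for the entire family of coefficients, on \eqref{Ellipticity} supplying one uniform ellipticity constant, and on \eqref{a: boundary regularity} ensuring the Nash/Sobolev inequalities on $D$. Since single-operator versions of these estimates are already in Daners, the most economical write-up is to quote the relevant bounds from \cite{Dan2} (and \cite[Ch.~2]{Monogr}) and simply verify that the constants produced there depend only on the ingredients just listed.
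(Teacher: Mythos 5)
Your proposal is correct and ends up exactly where the paper does: the paper's entire proof of Proposition~\ref{prop:higher_orders-skew_product-p-q-estimates} is the citation ``See~\cite[Sect.~5 and Cor.~7.2]{Dan2}'', which is precisely the economical write-up you recommend in your final paragraph, and your Nash-method/duality/interpolation sketch is the standard machinery behind those cited results, with the uniformity over $Y_0$ coming from the single ellipticity constant in \eqref{Ellipticity} and the norm bound in \eqref{Y1} as you say.
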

\begin{proof}
  See~\cite[Sect.~5 and Cor.~7.2]{Dan2}. \qed
\end{proof}

In~particular, setting $p = q$ we have
\begin{equation}
\label{eq:L_p estymation+}
     \left\|U_{a_0}(t, s)\right\|_{\mathcal{L}(L_{p}(D))} \leq M  e^{\gamma (t-s)}.
\end{equation}
 In the sequel we will frequently assume that $\gamma \ge 0$ in Proposition~\ref{prop:higher_orders-skew_product-p-q-estimates} and its derivates.

\begin{proposition}
\label{prop:local-regularity-higher_order}
Let $1 \le p \le \infty$ and $0 \le s < T$. Then for any $T_1 \in (s, T]$ there exists $\alpha \in (0,1)$ such that for any $a_0 \in Y_0$, any $u_0 \in L_p(D)$, and any compact subset $D_0 \subset D$ the function
$\bigl[\,[T_1,T] \times D_0 \ni (t,x) \mapsto (U_{a_0}(t) u_0)[x]\,\bigr]$
belongs to $C^{\alpha/2,\alpha}([T_1,T] \times D_0)$. Moreover,
for fixed $T_1$, and $D_0$, the $C^{\alpha/2,\alpha}([T_1,T]
\times D_0)$\nobreakdash-norm of the above restriction is bounded
above by a constant depending on $\norm{u_0}_{L_p(D)}$ only.
\end{proposition}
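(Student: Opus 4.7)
My approach is to use parabolic smoothing to first lift the initial datum to $L_\infty(D)$ and then to quote the classical interior Hölder regularity for weak solutions of uniformly parabolic divergence-form equations with bounded measurable coefficients.

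First I would fix an intermediate time $T_2 \in (s, T_1)$ and apply Proposition~\ref{prop:higher_orders-skew_product-p-q-estimates_2} with source space $L_p(D)$ and target space $L_\infty(D)$ to obtain
\begin{equation*}
    \norm{U_{a_0}(T_2, s) u_0}_{L_\infty(D)} \le M (T_2 - s)^{-\frac{N}{2p}} e^{\gamma (T_2 - s)} \norm{u_0}_{L_p(D)},
\end{equation*}
with constants that are uniform in $a_0 \in Y_0$ (the case $p = \infty$ makes this step vacuous). Writing $w_0 \vcentcolon= U_{a_0}(T_2, s) u_0 \in L_\infty(D)$ and using the cocycle identity (Proposition~\ref{Extension_identyty_cocycle}), the function
\begin{equation*}
    v(t, x) \vcentcolon= \bigl(U_{a_0}(t, s) u_0\bigr)[x] = \bigl(U_{a_0}(t, T_2) w_0\bigr)[x], \qquad t \in [T_2, T],
\end{equation*}
is a weak solution on $(T_2, T) \times D$ of the same parabolic problem, now with bounded initial data at time $T_2$ whose $L_\infty$ norm is controlled by $\norm{u_0}_{L_p(D)}$. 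It therefore suffices to bound $\norm{v}_{C^{\alpha/2,\alpha}([T_1,T]\times D_0)}$ by $\norm{w_0}_{L_\infty(D)}$.

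Second, I would invoke the classical De Giorgi--Nash--Moser interior Hölder regularity theorem for weak solutions of uniformly parabolic divergence-form equations with bounded measurable coefficients; hypotheses \eqref{a: boundedness} and \eqref{Ellipticity} provide precisely the input required. Concretely, there exist $\alpha \in (0,1)$ and $C > 0$, depending only on $N$, the ellipticity constant $\alpha_0$, the $L_\infty$ bounds on the coefficients afforded by \eqref{Y1}, the numbers $T_2, T_1, T$, and $\dist(D_0, \partial D)$, such that
\begin{equation*}
    \norm{v}_{C^{\alpha/2,\alpha}([T_1,T] \times D_0)} \le C \norm{v}_{L_\infty((T_2, T) \times D)}.
\end{equation*}
By~\eqref{eq:L_p estymation+} the right-hand side is in turn dominated by $\norm{w_0}_{L_\infty(D)}$, and chaining this with the smoothing bound from the first step yields the desired Hölder estimate, with a constant depending only on $\norm{u_0}_{L_p(D)}$ (once the fixed data $s, T_1, T, D_0$ and the class $Y_0$ are held constant).

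The main obstacle I anticipate is verifying that the Hölder exponent $\alpha$ and the constant $C$ in the interior regularity estimate can indeed be taken uniformly over $a_0 \in Y_0$. This is exactly the content of the De Giorgi--Nash--Moser theory in its measurable-coefficient formulation: the exponent and the constant depend on the coefficients only through their $L_\infty$ norms and through $\alpha_0$, never through any modulus of continuity. The precise form needed here (for nonautonomous, divergence-type parabolic equations with the three types of boundary conditions in play) is recorded in \cite[Ch.~2]{Monogr} and rests on~\cite{Dan2}, which I would cite for the technical details rather than reproduce the argument.
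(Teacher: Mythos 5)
Your proposal is correct and follows essentially the same route as the paper: the paper's proof likewise combines the $L_p$--$L_\infty$ smoothing estimate of Proposition~\ref{prop:higher_orders-skew_product-p-q-estimates} with the classical De Giorgi--Nash--Moser interior H\"older estimate for divergence-form parabolic equations with bounded measurable coefficients, cited there as \cite[Chpt.~III, Thm.~10.1]{LaSoUr}. Your write-up merely makes explicit the intermediate time $T_2$ and the uniformity of the exponent and constants over $a_0 \in Y_0$, which the paper leaves implicit.
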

\begin{proof}
It follows from Proposition~\ref{prop:higher_orders-skew_product-p-q-estimates} and from
\cite[Chpt.~III, Thm.~10.1]{LaSoUr}. \qed
\end{proof}

\begin{proposition}
  \label{prop:higher_order-compactness}
 For any $(s,T_1)\in \dot{\Delta}$, $1 \le p < \infty$ and a bounded $E \subset L_p(D)$ the set
  \begin{equation*}
    \bigl\{\, \bigl[\, [T_1, T] \ni t \mapsto U_{a_0}(t,s) u_0 \in L_p(D) \bigr] \vcentcolon a_0 \in Y_0, u_0 \in E \,\bigr\}
  \end{equation*}
  is precompact in $C([T_1, T], L_p(D))$.
\end{proposition}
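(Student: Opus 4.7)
The claim is an Arzelà–Ascoli type statement in $C([T_1,T], L_p(D))$. The plan is to verify three properties of the family $\mathcal{F}\vcentcolon=\{[\,t\mapsto U_{a_0}(t,s)u_0\,]:a_0\in Y_0,\ u_0\in E\}$: (i) a uniform $L_\infty(D)$ bound on $\{U_{a_0}(t,s)u_0\}$ for $t\in[T_1,T]$, (ii) uniform Hölder continuity of $(t,x)\mapsto (U_{a_0}(t,s)u_0)[x]$ on $[T_1,T]\times D_0$ for each compact $D_0\subset D$, and (iii) a tail estimate permitting passage from approximation on compact subsets of $D$ to approximation in $L_p(D)$.

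For (i), apply Proposition~\ref{prop:higher_orders-skew_product-p-q-estimates_2} with exponents $p\le q=\infty$: since $T_1-s>0$, we obtain
\begin{equation*}
\norm{U_{a_0}(t,s)u_0}_{L_\infty(D)}\le M(T_1-s)^{-N/(2p)}e^{\gamma(T-s)}\norm{u_0}_{L_p(D)}\le C_0
\end{equation*}
uniformly in $a_0\in Y_0$, $u_0\in E$ and $t\in[T_1,T]$. For (ii), first shift the time variable so that Proposition~\ref{prop:local-regularity-higher_order} applies to the family $U_{a_0}(\cdot,s)u_0$ on $[T_1,T]$ with initial data $u_0$ viewed at time $s$; alternatively, since $U_{a_0}(t,s)u_0=U_{a_0}(t,(s+T_1)/2)\,U_{a_0}((s+T_1)/2,s)u_0$ by the cocycle identity~\eqref{eq:cocycle2-2}, the new "initial datum" $U_{a_0}((s+T_1)/2,s)u_0$ is uniformly bounded in $L_p(D)$ by~\eqref{eq:L_p estymation+}, so Proposition~\ref{prop:local-regularity-higher_order} yields a uniform $C^{\alpha/2,\alpha}([T_1,T]\times D_0)$ bound for each compact $D_0\subset D$.

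Now fix a sequence $\{f_n\}\subset\mathcal{F}$, say $f_n(t)=U_{a_{0,n}}(t,s)u_{0,n}$, and exhaust $D$ by compacts $D_1\subset D_2\subset\cdots$ with $\abs{D\setminus D_k}\to 0$. By (ii) and the scalar Arzelà–Ascoli theorem on $[T_1,T]\times D_k$, a diagonal extraction produces a subsequence (still denoted $f_n$) and a continuous function $g:[T_1,T]\times D\to\RR$ with $f_n\to g$ uniformly on each $[T_1,T]\times D_k$; by (i) we also have $\abs{g(t,x)}\le C_0$ a.e. For $t,t'\in[T_1,T]$ and every $k$,
\begin{equation*}
\norm{f_n(t)-f_m(t)}_{L_p(D)}\le \abs{D_k}^{1/p}\norm{f_n-f_m}_{C([T_1,T]\times D_k)}+2C_0\abs{D\setminus D_k}^{1/p},
\end{equation*}
so choosing $k$ first and then $n,m$ large shows $(f_n)$ is Cauchy in $C([T_1,T],L_p(D))$; dominated convergence identifies the limit with $g(t,\cdot)$, and continuity of the limit in $t$ follows from the uniform convergence on compacts combined with the tail estimate.

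The main obstacle is point (iii), namely bridging the gap between Hölder convergence on compact subsets $D_0\subset D$ — which is all that Proposition~\ref{prop:local-regularity-higher_order} supplies, because of potential non-smoothness of $\partial D$ and lack of boundary regularity for the coefficients — and convergence in the global norm $L_p(D)$. The uniform $L_\infty(D)$ bound from step (i), combined with $\abs{D\setminus D_k}\to 0$, is exactly what closes this gap; without it one could not control the tail near $\partial D$ uniformly in $a_0\in Y_0$ and $u_0\in E$.
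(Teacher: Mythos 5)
Your proof is correct and follows essentially the same route as the paper's: interior H\"older bounds from Proposition~\ref{prop:local-regularity-higher_order} feed an Ascoli--Arzel\`a/diagonal argument over an exhaustion of $D$ by compacts, and the uniform $L_\infty(D)$ bound from Proposition~\ref{prop:higher_orders-skew_product-p-q-estimates_2} controls the tail near $\partial D$ to upgrade the convergence to $C([T_1,T],L_p(D))$. The only cosmetic difference is that you phrase the final step as a Cauchy criterion rather than comparing directly with the limit function, which changes nothing of substance.
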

\begin{proof}
  Fix $(s,T_1)\in \dot{\Delta}$, $1 \le p < \infty$ and a bounded $E \subset L_p(D)$. Let $(a_{0,m})_{m=1}^{\infty} \subset Y_0$ and $(u_{0,m})_{m=1}^{\infty} \subset E$.  Put, for $m = 1, 2, \dots$,
  \begin{equation*}
      u_m(t) \vcentcolon =U_{a_{0,m}}(t) u_{0,m}, \quad t \in [T_1, T].
  \end{equation*}
  It follows from Proposition~\ref{prop:local-regularity-higher_order} via the Ascoli--Arzel\`a theorem by diagonal process that, after possibly taking a subsequence, $(u_m)_{m = 1}^{\infty}$ converges as $m \to \infty$ to some function $\tilde{u}$ defined on $[T_1, T]$ and taking values in the set of continuous real functions on $D$ in such a way that for any compact $D_0 \subset D$ the functions $[\, t \mapsto u_m(t)\!\!\restriction_{D_0} \,]$ converge to $[\, t \mapsto \tilde{u} (t)\!\!\restriction_{D_0} \,]$ in $C([T_1, T], C(D_0))$.

  We claim that $u_m$ converge to $\tilde{u}$ in the $C([T_1, T], L_p(D))$\nobreakdash-\hspace{0pt}norm. By Proposition~\ref{prop:higher_orders-skew_product-p-q-estimates}, there is $M > 0$ such that $\norm{u_m(t)}_{L_{\infty}(D)} \le M$ and $\norm{\tilde{u}(t)}_{L_{\infty}(D)} \le M$ for all $m = 1, 2, \ldots$ and all $t \in [T_1, T]$.  For $\epsilon > 0$ take a compact $D_0 \subset D$ such that $\lambda(D \setminus D_0) < (\epsilon/(4 M))^p$, where $\lambda$ denotes the $N$\nobreakdash-\hspace{0pt}dimensional Lebesgue measure. We have
  \begin{equation*}
      \norm{(u_m(t) - \tilde{u}(t)) \, \mathbbm{1}_{\! D \setminus D_0}}_{L_p(D)} \le \frac{\epsilon}{2}
  \end{equation*}
  for all $m = 1, 2, \ldots$ and all $t \in [T_1, T]$.  Further, since $[\, t \mapsto u_m(t)\!\!\restriction_{D_0} \,]$ converge to $ [\, t \mapsto \tilde{u}(t)\!\!\restriction_{D_0} \,]$ in the $C([T_1, T], C(D_0))$\nobreakdash-\hspace{0pt}norm, there is $m_0$ such that
  \begin{equation*}
      \norm{(u_m - \tilde{u}) \mathbbm{1}_{\! D_0}}_{C([T_1, T], L_p(D))} \le \frac{\epsilon}{2}
  \end{equation*}
  for all $m \ge m_0$ (here $\mathbbm{1}_{\! D_0}$ stands for the function constantly equal to $\mathbbm{1}_{\! D_0}$).  Consequently,
  \begin{equation*}
      \norm{u_m - \tilde{u}}_{C([T_1, T], L_p(D))} \le \epsilon
  \end{equation*}
  for all $m \ge m_0$. \qed
 \end{proof}

\begin{corollary}
\label{cor:p-q}
    Let $1 \le p < \infty$, $0 \le s < T$, $a_0 \in Y_0$.  Then the mapping
    \begin{equation*}
        [\, (s,T] \times L_p(D) \ni (t, u_0) \mapsto U_{a_0}(t, s) u_0 \in L_p(D) \,]
    \end{equation*}
    is continuous.
\end{corollary}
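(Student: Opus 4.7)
The plan is to reduce joint continuity to the separate continuities already established. Fix $(t_0,u_0) \in (s,T] \times L_p(D)$ and let $(t_n,u_n) \to (t_0,u_0)$ in $(s,T] \times L_p(D)$. I will show $U_{a_0}(t_n,s)u_n \to U_{a_0}(t_0,s)u_0$ in $L_p(D)$ via the standard splitting
\begin{equation*}
    \norm{U_{a_0}(t_n,s)u_n - U_{a_0}(t_0,s)u_0}_{L_p(D)}
    \le \norm{U_{a_0}(t_n,s)(u_n - u_0)}_{L_p(D)}
    + \norm{U_{a_0}(t_n,s)u_0 - U_{a_0}(t_0,s)u_0}_{L_p(D)}.
\end{equation*}

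For the first term, I would invoke the operator-norm bound \eqref{eq:L_p estymation+} from Proposition~\ref{prop:higher_orders-skew_product-p-q-estimates_2} with $p = q$, which gives $\norm{U_{a_0}(t_n,s)}_{\mathcal{L}(L_p(D))} \le M e^{\gamma(T-s)}$ uniformly in $n$; hence this term is bounded by a constant times $\norm{u_n - u_0}_{L_p(D)} \to 0$.

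For the second term, I would apply Proposition~\ref{prop:higher_orders-p-q-continuity}, which asserts precisely that the map $[(s,T] \ni t \mapsto U_{a_0}(t,s) \in \mathcal{L}_{\mathrm{s}}(L_p(D))]$ is continuous for every $1 \le p < \infty$. Strong continuity at $t_0$ applied to the fixed vector $u_0$ yields $U_{a_0}(t_n,s)u_0 \to U_{a_0}(t_0,s)u_0$ in $L_p(D)$.

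There is no real obstacle here: the corollary is essentially the standard fact that a family of operators which is strongly continuous in a parameter and locally bounded in operator norm defines a jointly continuous action on the underlying Banach space. The only point worth flagging is that Proposition~\ref{prop:higher_orders-p-q-continuity} already covers the endpoint $p=1$ (for which Proposition~\ref{prop:higher_orders-skew_product-p_ii} is not stated), so no separate argument is needed in that case.
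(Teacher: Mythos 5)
Your proof is correct and follows essentially the same route as the paper: split off $U_{a_0}(t_m,s)(u_m-u_0)$ via the triangle inequality, control it with the uniform operator-norm bound from Proposition~\ref{prop:higher_orders-skew_product-p-q-estimates_2}, and handle the remaining term with the strong continuity in $t$ from Proposition~\ref{prop:higher_orders-p-q-continuity}. Your observation that Proposition~\ref{prop:higher_orders-p-q-continuity} already covers $p=1$ is also accurate.
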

\begin{proof}
   Let $(u_m)_{m = 1}^{\infty}$ converge in $L_p(D)$ to $u_0$ and let $(t_m)_{m = 1}^{\infty}$ converge to $t > s$.  Take $\epsilon > 0$.  It follows from Proposition~\ref{prop:higher_orders-p-q-continuity} that there is $m_1$ such that $\norm{U_{a_0}(t_m, s) u_0 - U_{a_0}(t, s) u_0}_{L_p(D)} < \epsilon/2$ for $m \ge m_1$, and it follows from Proposition~\ref{prop:higher_orders-skew_product-p-q-estimates_2} that there is $m_2$ such that $\norm{U_{a_0}(t_m, s) u_m - U_{a_0}(t_m, s) u_0}_{L_p(D)} < \epsilon/2$ for $m \ge m_2$.  Consequently,
   \begin{multline*}
       \norm{U_{a_0}(t_m, s) u_m -U_{a_0}(t, s) u_0}_{L_p(D)} \\
       \le \norm{U_{a_0}(t_m, s) u_m - U_{a_0}(t_m, s) u_0}_{L_p(D)}
       \\+ \norm{U_{a_0}(t_m, s) u_0 - U_{a_0}(t, s) u_0}_{L_p(D)} < \epsilon
   \end{multline*}
    for $m \ge  \max\{m_1, m_2\}$. \qed
\end{proof}

\subsection{The Adjoint Operator}
For a fixed $0<s\le T$ together with \textup{(\ref{main-eq-pert-delay})$_{a_0}$+
(\ref{bc-higher_orders})$_{a_0}$} we consider the adjoint equations, that is the backward parabolic equations
\begin{align}
\label{eq-adjoint-hull}
-\frac{\partial u}{\partial t} = &\sum_{i=1}^{N}
\frac{\p }{\p x_i}\Bigl (\sum_{j=1}^{N} a_{ji}(t,x) \frac{\p u}{\p
x_j} - b_i(t,x) u \Bigr ) \nonumber \\
& - \sum_{i=1}^{N} a_i(t,x) \frac{\p u}{\p x_i}+c_0(t,x)u, \quad  0\le t<s,\ x \in D,
\end{align}
complemented with the boundary conditions:
\begin{equation}
\label{bc-adjoint-hull}
\mathcal{B}^{*}_{a_0}u = 0, \quad  0\le t<s,\
x \in \p D,
\end{equation}
where $\mathcal{B}^{*}_{a_0}u = \mathcal{B}_{a_0^*}u$ with $a_0^* \vcentcolon=
((a_{ji})_{i,j=1}^N,-(b_i)_{i=1}^N,-(a_i)_{i=1}^N, c_0,d_0)$ and
$\mathcal{B}_{a_0^*}$ is as in \eqref{bc-higher_orders}$_{a_0}$ with
$a_0$ replaced by $a_0^*$.

Since all analogs of the assumptions \eqref{a: boundary regularity} and \eqref{a: boundedness} are satisfied for~\eqref{eq-adjoint-hull}+\eqref{bc-adjoint-hull}, we can define, for $u_0 \in L_2(D)$, a global (weak) solution of~\eqref{eq-adjoint-hull}$_{a^*_0}$+\eqref{bc-adjoint-hull}$_{a^*_0}$, defined on $[0,s]$, with the \textit{final condition} $u(s) = u_0$. The following analog of Proposition~\ref{prop:higher_orders-skew_product} holds.
\begin{proposition}
    \label{prop:higher_orders-skew_product-adjoint}
  For $a_0 \in Y_0$, $0<s\le T$ and $u_0 \in L_2(D)$ there is precisely one global weak solution
  \begin{equation*}
   \bigl[\, [0,s] \ni t \mapsto  U^{*}_{a_0}(t,s) u_0 \in L_2(D) \,\bigr]
  \end{equation*}
  of \textup{\eqref{eq-adjoint-hull}$_{a^*_0}$+%
  \eqref{bc-adjoint-hull}}$_{a^*_0}$ satisfying the final condition $u^{*}(s; a_0, u_0) = u_0$. This  mapping has the following properties
\begin{align}
  \label{eq:cocycle2-1-adjoint-semiprocess}
  U^{*}_{a_{0}}(t, t) = \mathrm{Id}_{L_2(D)}, & \quad a_{0} \in Y_0, \ t \in [0,s],
  \\
  \label{eq:cocycle2-2-adjoint-semiprocess}
  U^{*}_{a_{0}}(t_1, t_2) \circ U^{*}_{a_{0}}(t_2, s) = U^{*}_{a_{0}}(t_1, s), & \quad a_{0} \in Y_0, \ 0 \le t_1 \le t_2 \le s.
  \end{align}
\end{proposition}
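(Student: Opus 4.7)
The strategy is a time reversal that converts the backward problem into a forward one already handled by the earlier theory. Fix $s \in (0, T]$ and $a_{0} \in Y_{0}$. Define the substitution $\tau \vcentcolon= s - t$ and, for a candidate backward solution $u$ on $[0, s]$, set $\tilde{u}(\tau, x) \vcentcolon= u(s - \tau, x)$. A direct (formal) computation shows that $u$ satisfies $-\partial_{t} u = L_{a_{0}^{*}} u$ in $(0, s) \times D$ if and only if $\tilde{u}$ satisfies $\partial_{\tau} \tilde{u} = \tilde{L} \tilde{u}$ in $(0, s) \times D$, where $\tilde{L}$ is the divergence-form operator with coefficients $\tilde{a}_{ij}(\tau, x) = a_{ji}(s-\tau, x)$, $\tilde{a}_{i}(\tau, x) = -b_{i}(s-\tau, x)$, $\tilde{b}_{i}(\tau, x) = -a_{i}(s-\tau, x)$, $\tilde{c}_{0}(\tau, x) = c_{0}(s-\tau, x)$, and $\tilde{d}_{0}(\tau, x) = d_{0}(s-\tau, x)$. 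The final condition $u(s) = u_{0}$ becomes the initial condition $\tilde{u}(0) = u_{0}$.

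Next I would verify, one item at a time, that the reflected coefficient tuple $\tilde{a}_{0}$ lies in (an admissible relabelling of) $Y_{0}$: $L_{\infty}$\nobreakdash-boundedness is invariant under the measure-preserving change $\tau \mapsto s-\tau$; the symmetry $a_{ij} = a_{ji}$ gives $\tilde{a}_{ij} = \tilde{a}_{ji}$ and makes the ellipticity constant $\alpha_{0}$ carry over verbatim, so \eqref{Ellipticity} holds; and nonnegativity of $\tilde{d}_{0}$ is preserved, so \eqref{Y2} holds. To make the equivalence rigorous at the level of weak solutions, I would translate Definition~\ref{loc-weak-solution-def-null} through the substitution: a test function $\psi \in \mathcal{D}([0, s), \RR)$ (for the forward problem on $[0, s]$) corresponds via $\psi(\tau) = \varphi(s - \tau)$ to a $\varphi$ compactly supported in $(0, s]$, which is the natural test class for the backward problem with final value at $s$; the change of variables in the two integrals in Definition~\ref{loc-weak-solution-def-null} produces exactly the weak formulation of \eqref{eq-adjoint-hull}+\eqref{bc-adjoint-hull}. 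Therefore existence and uniqueness of a global weak solution $\tilde{u}$ of the reflected forward problem, granted by the existence proposition for \eqref{main-eq-pert-delay}+\eqref{bc-higher_orders}, transfers back to existence and uniqueness of the weak solution of \eqref{eq-adjoint-hull}$_{a_{0}^{*}}$+\eqref{bc-adjoint-hull}$_{a_{0}^{*}}$ with final value $u_{0}$, and we may define $U^{*}_{a_{0}}(t, s) u_{0} \vcentcolon= \tilde{u}(s - t)$ for $t \in [0, s]$.

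Properties~\eqref{eq:cocycle2-1-adjoint-semiprocess} and~\eqref{eq:cocycle2-2-adjoint-semiprocess} then reduce to the corresponding identities for the forward evolution operator $\tilde{U}$ of the reflected problem. Indeed, setting $\sigma_{i} \vcentcolon= s - t_{i}$ the relation $U^{*}_{a_{0}}(t_{1}, t_{2}) \circ U^{*}_{a_{0}}(t_{2}, s) = U^{*}_{a_{0}}(t_{1}, s)$ for $0 \le t_{1} \le t_{2} \le s$ becomes $\tilde{U}(\sigma_{1}, \sigma_{2}) \circ \tilde{U}(\sigma_{2}, 0) = \tilde{U}(\sigma_{1}, 0)$ with $0 \le \sigma_{2} \le \sigma_{1} \le s$, which is \eqref{eq:cocycle2-2}; likewise \eqref{eq:cocycle2-1-adjoint-semiprocess} follows from \eqref{eq:cocycle2-1}.

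The main obstacle is bookkeeping rather than analysis: one must check carefully that the change-of-variables identity for the weak formulation is valid (including the distributional time derivative lying in $L_{2}((0, s), V^{*})$, which is transported by $\tau \mapsto s - \tau$ without loss), and that the reflected coefficients still satisfy the hypotheses under which the existence-of-weak-solutions proposition was invoked. No new PDE input is required beyond what has already been proved for the forward problem.
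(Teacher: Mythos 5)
Your time-reversal argument is correct, and it is essentially the mechanism the paper itself relies on: the paper gives no separate proof of Proposition~\ref{prop:higher_orders-skew_product-adjoint}, simply observing that the adjoint problem \eqref{eq-adjoint-hull}$+$\eqref{bc-adjoint-hull} satisfies all the structural hypotheses of the forward theory, so existence, uniqueness and the cocycle identities carry over. Your write-up just makes explicit the reflection $\tau = s - t$ and the attendant bookkeeping that the paper leaves implicit.
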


From now on $s$ and $t$ will play a role as in the \textup{(\ref{main-eq-pert-delay})$_{a_0}$+
(\ref{bc-higher_orders})$_{a_0}$}.

Below we formulate an analog of Proposition~\ref{prop:higher_orders-skew_product-p}.
\begin{proposition}
~
\label{prop:higher_orders-skew_product-p-adjoint}
\begin{enumerate}[label=\textup{(\roman*)},ref=\ref{prop:higher_orders-skew_product-p-adjoint}\textup{(\roman*)}]
      \item\label{prop:higher_orders-skew_product-p-adjoint_(i)}
      Let $1 \le p < \infty$ and $(s,t)\in\dot{\Delta}$. Then $U^{*}_{a_0}(s,t)$ extends to a linear operator in $\mathcal{L}(L_{p}(D)).$
      \item\label{prop:higher_orders-skew_product-p-adjoint_(ii)}
      Let $1 < p < \infty$, $0<s\le T$ and $a_{0} \in Y_0$.  Then the mapping
      \begin{equation*}
           \bigl[\, [0,s] \ni t \mapsto U^{*}_{a_{0}}(s,t) \in \mathcal{L}_{\mathrm{s}}(L_{p}(D)) \,\bigr]
      \end{equation*}
      is continuous.
  \end{enumerate}
\end{proposition}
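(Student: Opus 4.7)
The plan is to reduce both assertions to the already-established Propositions~\ref{prop:higher_orders-skew_product-p} and~\ref{prop:higher_orders-p-q-continuity} via a time reversal. Fix $s \in (0, T]$ and introduce $\tau \vcentcolon= s - t$, $v(\tau, x) \vcentcolon= u(s - \tau, x)$, where $u$ solves \eqref{eq-adjoint-hull}$_{a_0^*}$+\eqref{bc-adjoint-hull}$_{a_0^*}$ on $[0, s]$ with final condition $u(s) = u_0$. A direct computation shows that $v$ satisfies an equation of the form~\eqref{main-eq-pert-delay} on $[0, s]$ with initial condition $v(0) = u_0$ and coefficients
\begin{equation*}
    \tilde a_{ij}(\tau, x) \vcentcolon= a_{ji}(s-\tau, x), \quad \tilde a_i(\tau, x) \vcentcolon= -b_i(s-\tau, x), \quad \tilde b_i(\tau, x) \vcentcolon= -a_i(s-\tau, x),
\end{equation*}
$\tilde c_0(\tau, x) \vcentcolon= c_0(s-\tau, x)$ and, in the Robin case, $\tilde d_0(\tau, x) \vcentcolon= d_0(s-\tau, x)$, with a boundary operator of exactly the same type as in \eqref{bc-higher_orders}.

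Next I would verify that $\tilde a_0 \vcentcolon= (\tilde a_{ij}, \tilde a_i, \tilde b_i, \tilde c_0, 0, \tilde d_0)$ satisfies \eqref{a: boundary regularity}, \eqref{a: boundedness} and \eqref{Ellipticity}: the $L_{\infty}$ bounds are preserved under time reversal and the sign changes, the symmetry $a_{ji} = a_{ij}$ yields ellipticity with the same constant $\alpha_0$, and $\tilde d_0 \ge 0$ in the Robin case. Hence Propositions~\ref{prop:higher_orders-skew_product-p}, \ref{prop:higher_orders-p-q-continuity} and~\ref{prop:higher_orders-skew_product-p-q-estimates} apply to the forward problem with coefficient $\tilde a_0$, yielding a solution family $\widetilde U_{\tilde a_0, p}(\tau, 0) \in \mathcal L(L_p(D))$ which depends strongly continuously on $\tau \in [0, s]$ for $1 < p < \infty$. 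Uniqueness of weak solutions (Proposition~\ref{prop:higher_orders-skew_product-adjoint}) then gives the identification $U^{*}_{a_0}(s - \tau, s)\, u_0 = \widetilde U_{\tilde a_0}(\tau, 0)\, u_0$ for every $u_0 \in L_2(D)$.

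Part~(i) follows at once: for $u_0 \in L_2(D) \cap L_p(D)$, estimate~\eqref{eq:L_p estymation+} applied to $\widetilde U_{\tilde a_0, p}$ bounds $\norm{U^{*}_{a_0}(s - \tau, s)\, u_0}_{L_p(D)}$ in terms of $\norm{u_0}_{L_p(D)}$, so by density $U^{*}_{a_0}(s - \tau, s)$ extends uniquely to an element of $\mathcal L(L_p(D))$. Part~(ii) follows by transporting the strong continuity of $\tau \mapsto \widetilde U_{\tilde a_0, p}(\tau, 0)$ through the homeomorphism $t \mapsto s - t$ of $[0, s]$. The main obstacle I anticipate is purely bookkeeping: verifying that the backward divergence-form adjoint equation, once expanded and regrouped with the coefficients above, really does take the standard forward form~\eqref{main-eq-pert-delay}+\eqref{bc-higher_orders} --- in particular that the sign flips, the index swap $a_{ij} \leftrightarrow a_{ji}$, and the transition of the Neumann/Robin conormal operator all match up correctly. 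Once this identification is made clean, the cited propositions do the rest.
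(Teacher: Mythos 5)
Your proposal is correct and matches the paper's (implicit) justification: the paper states this proposition without a separate proof precisely because, as it notes before Proposition~\ref{prop:higher_orders-skew_product-adjoint}, the backward adjoint problem \eqref{eq-adjoint-hull}+\eqref{bc-adjoint-hull} is, after time reversal, a forward parabolic problem of the same class satisfying \eqref{a: boundary regularity}, \eqref{a: boundedness} and \eqref{Ellipticity} (the index swap being harmless by the symmetry assumption in \eqref{Ellipticity}), so the forward results from \cite{Dan2} apply verbatim. Your explicit bookkeeping of the coefficient transformation and the identification via uniqueness is exactly the intended reduction.
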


The following analog of Proposition~\ref{prop:higher_orders-skew_product-p-q-estimates_1} holds.
\begin{proposition}
\label{prop:higher_orders-skew_product-p-q-adjoint}
      For any $a_{0} \in Y_0$, any $0\le t<s\le T$ and any $1 \le p \le q \le \infty$ there holds $U^{*}_{a_{0}}(s,t) \in \mathcal{L}_{\mathrm{s}}(L_p(D), L_q(D))$.
\end{proposition}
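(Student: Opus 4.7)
The plan is to reduce the assertion to Proposition~\ref{prop:higher_orders-skew_product-p-q-estimates_1} by exploiting the fact that, after a time reversal, the adjoint problem~\eqref{eq-adjoint-hull}$_{a_0^*}$+\eqref{bc-adjoint-hull}$_{a_0^*}$ becomes a forward parabolic problem of exactly the form~\eqref{main-eq-pert-delay}+\eqref{bc-higher_orders} for which the $L_p$\textendash$L_q$ smoothing estimate is already known.

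\medskip

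First I would fix $a_0 \in Y_0$ and introduce the coefficient tuple $a_0^{\#}$ obtained from $a_0^{*}$ by the time substitution $\tau = s - t$, that is,
\begin{equation*}
    \tilde{a}_{ij}(\tau,x) \vcentcolon= a_{ji}(s-\tau,x), \quad \tilde{a}_i(\tau,x) \vcentcolon= -b_i(s-\tau,x), \quad \tilde{b}_i(\tau,x) \vcentcolon= -a_i(s-\tau,x),
\end{equation*}
and analogously for $\tilde{c}_0, \tilde{d}_0$. Because $(a_{ij})$ is symmetric, the principal part of $a_0^{\#}$ coincides with that of $a_0$, so the ellipticity constant $\alpha_0$ from~\eqref{Ellipticity} is inherited; assumptions~\eqref{a: boundary regularity}, \eqref{a: boundedness} and the nonnegativity of $d_0$ from~\eqref{Y2} are preserved pointwise by the time reversal. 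Hence the singleton $\{a_0^{\#}\}$ qualifies as a legitimate coefficient set for the forward theory of Section~\ref{sect:weak-sol}, and in particular Proposition~\ref{prop:higher_orders-skew_product-p-q-estimates_1} yields $U_{a_0^{\#}}(s-t, 0) \in \mathcal{L}(L_p(D), L_q(D))$ for every $1 \le p \le q \le \infty$ and every $(t,s) \in \dot{\Delta}$.

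\medskip

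Next I would identify $U^{*}_{a_0}(s,t)$ with $U_{a_0^{\#}}(s-t,0)$. This is essentially a change of variable in Definition~\ref{loc-weak-solution-def-null}: if $u(\cdot)$ is the backward weak solution on $[t,s]$ with final datum $u(s)=u_0$, then $\tilde{u}(\tau) \vcentcolon= u(s-\tau)$ lies in $L_2([0,s-t],V)$ with $\dot{\tilde{u}} \in L_2([0,s-t],V^{*})$, and substituting into the weak formulation of~\eqref{eq-adjoint-hull}$_{a_0^*}$+\eqref{bc-adjoint-hull}$_{a_0^*}$ produces exactly the weak formulation of \eqref{main-eq-pert-delay}$_{a_0^{\#}}$+\eqref{bc-higher_orders}$_{a_0^{\#}}$ on $[0,s-t]$ with initial datum $\tilde{u}(0) = u_0$ (the sign changes in $\tilde{a}_i$ and $\tilde{b}_i$ compensate exactly for the sign in $-\partial_t$ and for the transposition in the divergence-form structure of $a_0^*$). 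Uniqueness of weak solutions then forces $\tilde{u}(\tau) = U_{a_0^{\#}}(\tau,0)u_0$, i.e.\ $U^{*}_{a_0}(s,t)u_0 = U_{a_0^{\#}}(s-t,0)u_0$ for $u_0 \in L_2(D)$. Combined with the previous paragraph, we conclude that $U^{*}_{a_0}(s,t)$ extends from $L_2(D)\cap L_p(D)$ to an element of $\mathcal{L}(L_p(D), L_q(D))$, which is the desired statement.

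\medskip

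The main obstacle is the bookkeeping in the time-reversal identification: one has to match the divergence-form structure of the adjoint equation (which mixes the $a_{ji}$ with $-b_i$) to the divergence form of \eqref{main-eq-pert-delay} correctly, and verify that the weak formulation transforms cleanly (including the boundary integral in the Robin case, where $\tilde{d}_0 = d_0(s-\cdot) \ge 0$ is preserved). Once this is in place, the $L_p$\textendash$L_q$ estimate transfers automatically, and no new parabolic regularity input is required beyond what was already established for the forward problem.
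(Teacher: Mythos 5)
Your argument is correct and is essentially the paper's own (implicit) justification made explicit: the paper states this proposition without proof, relying on the observation that the adjoint problem \eqref{eq-adjoint-hull}$+$\eqref{bc-adjoint-hull} satisfies the analogues of \eqref{a: boundary regularity}, \eqref{a: boundedness}, \eqref{Ellipticity} and \eqref{Y2}, so that the forward $L_p$\textendash$L_q$ theory (Proposition~\ref{prop:higher_orders-skew_product-p-q-estimates_1}, ultimately Daners' estimates) applies after the time reversal $\tau = s - t$. Your bookkeeping of the reversed coefficients and the identification $U^{*}_{a_0}(s,t) = U_{a_0^{\#}}(s-t,0)$ via uniqueness of weak solutions is exactly the intended reduction.
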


\begin{proposition}
\label{prop:dual-L2}
    For  $a_0 \in Y_0$ there holds
    \begin{multline}
    \label{eq:dual-L2-semiprocess}
        \langle U_{a_0}(t,s) u_0, v_0 \rangle_{L_2(D)} = \langle u_0, U^{*}_{a_0}(s,t) v_0 \rangle_{L_2(D)}
        \\
        \text{for any } 0\le s \le t\le T, \ u_0, v_0 \in L_{2}(D).
    \end{multline}
\end{proposition}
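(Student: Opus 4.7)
The plan is to establish the identity by showing that, for fixed $u_0, v_0 \in L_2(D)$ and $0 \le s < t \le T$, the scalar function
\[
\Phi(\tau) \vcentcolon= \langle U_{a_0}(\tau, s) u_0,\; U^{*}_{a_0}(\tau, t) v_0 \rangle_{L_2(D)}, \qquad \tau \in [s,t],
\]
is constant on $[s,t]$. The conclusion then follows by evaluating $\Phi$ at the endpoints $\tau = s$ and $\tau = t$ and invoking \eqref{eq:cocycle2-1} together with \eqref{eq:cocycle2-1-adjoint-semiprocess}. The degenerate case $s = t$ reduces directly to these identities.

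Writing $u(\tau) \vcentcolon= U_{a_0}(\tau, s) u_0$ and $v(\tau) \vcentcolon= U^{*}_{a_0}(\tau, t) v_0$, both trajectories lie in $H(s,t;V)$, so the product rule for this class (see \cite[Chpt.~XVIII]{DL}) guarantees that $\Phi$ is absolutely continuous on $[s,t]$ with
\[
\Phi'(\tau) \;=\; \langle \dot{u}(\tau), v(\tau)\rangle_{V^{*}, V} \;+\; \langle u(\tau), \dot{v}(\tau)\rangle_{V, V^{*}}
\]
for a.e. $\tau \in (s,t)$. The weak formulation in Definition~\ref{loc-weak-solution-def-null}, reinterpreted as a distributional identity in $V^{*}$, gives $\langle \dot{u}(\tau), w\rangle_{V^{*}, V} = -B_{a_0}[\tau; u(\tau), w]$ for every $w \in V$, while the analogous weak formulation of the backward problem \eqref{eq-adjoint-hull}--\eqref{bc-adjoint-hull} with coefficient array $a_0^{*}$ yields $\langle \dot{v}(\tau), w\rangle_{V^{*}, V} = B_{a_0^{*}}[\tau; v(\tau), w]$.

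The crucial algebraic step is the symmetry identity
\[
B_{a_0^{*}}[\tau; v, u] \;=\; B_{a_0}[\tau; u, v] \qquad \text{for all } u, v \in V \text{ and a.e. } \tau \in (0, T).
\]
This is a term-by-term check using the definition $a_0^{*} = ((a_{ji}), -(b_i), -(a_i), c_0, d_0)$ together with the symmetry $a_{ij} = a_{ji}$ supplied by \eqref{Ellipticity}; the Robin boundary contribution matches automatically because $d_0$ is unchanged under the passage from $a_0$ to $a_0^{*}$. Substituting this collapses the derivative to $\Phi'(\tau) = -B_{a_0}[\tau; u(\tau), v(\tau)] + B_{a_0}[\tau; u(\tau), v(\tau)] = 0$ for a.e. $\tau$, so $\Phi$ is constant on $[s,t]$ and the claim follows.

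I expect the main technical obstacle to be justifying the product rule for the pair $u, v \in H(s,t;V)$, as opposed to pairing a single such element against a fixed test vector as in Definition~\ref{loc-weak-solution-def-null}. This relies on the classical integration by parts formula in $H(s,t;V)$, obtained through density of $C^{1}([s,t], V)$ in that space followed by a limiting argument; it can be imported directly from \cite[Chpt.~XVIII]{DL}.
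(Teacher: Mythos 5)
Your argument is correct, and it supplies a proof that the paper itself does not actually spell out: for Proposition~\ref{prop:dual-L2} the authors only remark that the operator is the functional\nobreakdash-analytic dual and refer to \cite[Prop.~2.3.3]{Monogr}. What you write is the standard argument behind that citation, and all the delicate points are handled or at least correctly identified: the Lions integration\nobreakdash-by\nobreakdash-parts formula in $H(s,t;V)$ (which also gives $H(s,t;V)\hookrightarrow C([s,t],L_2(D))$, needed to evaluate $\Phi$ at the endpoints via \eqref{eq:cocycle2-1} and \eqref{eq:cocycle2-1-adjoint-semiprocess}); the passage from the test\nobreakdash-function formulation of Definition~\ref{loc-weak-solution-def-null} to the a.e.\ identity $\dot u(\tau)=-A(\tau)u(\tau)$ in $V^{*}$; and the sign flip in the backward equation that makes the two bilinear\nobreakdash-form contributions cancel. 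The term\nobreakdash-by\nobreakdash-term identity $B_{a_0^{*}}[\tau;v,u]=B_{a_0}[\tau;u,v]$ does check out (in fact the transposition built into $a_0^{*}$ already handles the second\nobreakdash-order term, so the symmetry $a_{ij}=a_{ji}$ from \eqref{Ellipticity} is only needed to reconcile the particular index placement in the paper's formula for $B_{a_0}$). It is worth noting that your ``show the pairing $\Phi(\tau)$ is constant'' strategy is exactly the one the paper uses for the $L_p$ version, Proposition~\ref{prop:dual-Lp-q}; the difference is that there constancy is obtained algebraically from the cocycle identities \eqref{eq:cocycle2-2}, \eqref{eq:cocycle2-2-adjoint-semiprocess} and the already\nobreakdash-established $L_2$ duality, whereas you must genuinely differentiate, since in the $L_2$ case there is no prior duality statement to bootstrap from.
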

Proposition~\ref{prop:dual-L2} states that the linear operator $U^{*}_{a_0}(s, t) \in \mathcal{L}(L_2(D))$ is the dual (in the functional-analytic sense) of $U_{a_0}(t,s) \in \mathcal{L}(L_2(D))$.  For a proof, see~\cite[Prop.~2.3.3]{Monogr}.

\begin{proposition}
\label{prop:dual-Lp-q}
    For $1 < p < \infty$ and $a_0 \in Y_0$ there holds
\begin{equation}
    \label{eq:dual-Lp-q-semiprocess}
        \langle U_{a_0}(t,s) u_0, v_0 \rangle_{L_p(D), L_{p'}(D)} = \langle u_0, U^{*}_{a_0}(s,t) v_0 \rangle_{L_p(D), L_{p'}(D)}
\end{equation}
for any $(s,t)\in \dot{\Delta}$, $u_0 \in L_p(D)$ and $v_0 \in L_{p'}(D)$.

\end{proposition}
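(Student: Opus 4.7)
The plan is to reduce the $L_p$/$L_{p'}$ duality identity to the already\nobreakdash-established $L_2$ case in Proposition~\ref{prop:dual-L2} by approximating $u_0$ and $v_0$ by functions that simultaneously lie in $L_2(D)$, then passing to the limit using the boundedness of $U_{a_0}(t,s)$ on $L_p(D)$ and of $U^{*}_{a_0}(s,t)$ on $L_{p'}(D)$.

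First I would observe that since $D$ is a bounded domain, bounded simple functions on $D$ belong to $L_r(D)$ for every $r \in [1, \infty]$; in particular they sit in $L_2(D) \cap L_p(D) \cap L_{p'}(D)$, and they are dense in both $L_p(D)$ and $L_{p'}(D)$ for $1 < p < \infty$. Pick sequences $(u_{0,n})_{n=1}^{\infty}$ of bounded simple functions converging to $u_0$ in $L_p(D)$, and $(v_{0,n})_{n=1}^{\infty}$ of bounded simple functions converging to $v_0$ in $L_{p'}(D)$. For each $n$, Proposition~\ref{prop:higher_orders-skew_product-p_i} guarantees that $U_{a_0}(t,s) u_{0,n} = U_{a_0,p}(t,s) u_{0,n} \in L_p(D)$, and the analogous statement (from Proposition~\ref{prop:higher_orders-skew_product-p-adjoint_(i)}) gives $U^{*}_{a_0}(s,t) v_{0,n} \in L_{p'}(D)$. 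Moreover, these elements also lie in $L_2(D)$, so Proposition~\ref{prop:dual-L2} applies and yields
\begin{equation*}
\langle U_{a_0}(t,s) u_{0,n}, v_{0,n}\rangle_{L_2(D)} = \langle u_{0,n}, U^{*}_{a_0}(s,t) v_{0,n}\rangle_{L_2(D)}.
\end{equation*}

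Next I would note that for $f \in L_p(D)\cap L_2(D)$ and $g \in L_{p'}(D) \cap L_2(D)$ the pairings $\langle f,g\rangle_{L_2(D)}$ and $\langle f,g\rangle_{L_p(D),L_{p'}(D)}$ coincide, since both equal $\int_D f[x]g[x]\,\dd x$; so the displayed identity above holds with the $L_p$/$L_{p'}$ pairing in place of the $L_2$ inner product. To pass to the limit, I invoke Proposition~\ref{prop:higher_orders-skew_product-p-q-estimates_2} with $p=q$, which gives a uniform bound $\|U_{a_0}(t,s)\|_{\mathcal{L}(L_p(D))} \le M e^{\gamma(t-s)}$, so $U_{a_0}(t,s)u_{0,n} \to U_{a_0}(t,s)u_0$ in $L_p(D)$. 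The analogous bound for $U^{*}_{a_0}(s,t)$ on $L_{p'}(D)$ (Proposition~\ref{prop:higher_orders-skew_product-p-q-adjoint} with $p=q=p'$) gives $U^{*}_{a_0}(s,t) v_{0,n} \to U^{*}_{a_0}(s,t) v_0$ in $L_{p'}(D)$. Combined with $u_{0,n} \to u_0$ in $L_p(D)$ and $v_{0,n} \to v_0$ in $L_{p'}(D)$, Hölder's inequality lets me pass to the limit on both sides, yielding~\eqref{eq:dual-Lp-q-semiprocess}.

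The only mildly delicate point is the identification of the two pairings on the intersections; otherwise the argument is a clean density plus continuity argument. No truly hard step is expected, since all three ingredients --- existence of the $L_p$-realization of $U_{a_0}$ and $U^{*}_{a_0}$, their uniform operator bounds, and the $L_2$ duality --- are already available from Propositions~\ref{prop:higher_orders-skew_product-p}, \ref{prop:higher_orders-skew_product-p-q-estimates}, \ref{prop:higher_orders-skew_product-p-adjoint}, \ref{prop:higher_orders-skew_product-p-q-adjoint} and \ref{prop:dual-L2}.
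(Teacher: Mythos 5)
Your argument is correct, but it takes a genuinely different route from the paper. You regularize the \emph{data}: approximate $u_0$ and $v_0$ by bounded simple functions lying in $L_2(D)\cap L_p(D)\cap L_{p'}(D)$, apply Proposition~\ref{prop:dual-L2} directly at the endpoint times $s$ and $t$, identify the $L_2$ pairing with the $L_p$/$L_{p'}$ pairing on the intersection, and pass to the limit using only the consistency of the $L_p$\nobreakdash-realizations (Propositions~\ref{prop:higher_orders-skew_product-p_i} and~\ref{prop:higher_orders-skew_product-p-adjoint_(i)}) and the boundedness of $U_{a_0}(t,s)$ on $L_p(D)$ and of $U^{*}_{a_0}(s,t)$ on $L_{p'}(D)$. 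The paper instead regularizes in \emph{time}: it uses the smoothing estimates of Propositions~\ref{prop:higher_orders-skew_product-p-q-estimates_1} and~\ref{prop:higher_orders-skew_product-p-q-adjoint} to place $U_{a_0}(\zeta,s)u_0$ and $U^{*}_{a_0}(\zeta,t)v_0$ in $L_2(D)$ for intermediate $\zeta\in(s,t)$, shows via the cocycle identities and Proposition~\ref{prop:dual-L2} that $\zeta\mapsto\langle U_{a_0}(\zeta,s)u_0,U^{*}_{a_0}(\zeta,t)v_0\rangle_{L_2(D)}$ is constant, and then takes the one\nobreakdash-sided limits $\zeta\nearrow t$ and $\zeta\searrow s$ using the continuity results \ref{prop:higher_orders-p-q-continuity}, \ref{prop:higher_orders-skew_product-p_ii} and \ref{prop:higher_orders-skew_product-p-adjoint_(ii)}. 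Your version is more elementary and more portable, since it needs neither the parabolic smoothing nor the endpoint strong continuity of the propagators, only density and operator boundedness; the paper's version is the one that generalizes when the initial data cannot be approximated within $L_2$ (and it reuses machinery already set up for other purposes). One small citation point: for the boundedness of $U^{*}_{a_0}(s,t)$ on $L_{p'}(D)$ you should lean on Proposition~\ref{prop:higher_orders-skew_product-p-adjoint_(i)} (or \ref{prop:higher_orders-skew_product-p-q-adjoint}, as you do) rather than on the quantitative adjoint estimate of Proposition~\ref{prop:higher_orders-skew_product-p-q-estimates-adjoint}, since in the paper that estimate is itself deduced from the duality you are proving; as written you avoid this circularity, which is the right choice.
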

\begin{proof}
     Fix $(s,t)\in \dot{\Delta}$, $u_0 \in L_p(D)$ and $v_0 \in L_{q'}(D)$.  From Propositions~\ref{prop:higher_orders-skew_product-p-q-estimates_1} and \ref{prop:higher_orders-skew_product-p-q-adjoint} it follows that $U_{a_0}(\zeta,s) u_0, U^{*}_{a_0}(\zeta,t) v_0 \in L_2(D)$ for all $\zeta \in (s, t)$, consequently $\langle U_{a_0}(\zeta,s) u_0, U_{a_0}(\zeta,t) v_0 \rangle_{L_2(D)}$ is well defined for such $\zeta$.
    An application of \eqref{eq:cocycle2-2}, Proposition~\ref{prop:dual-L2} and \eqref{eq:cocycle2-2-adjoint-semiprocess} gives that for any $s < \zeta_1 \le \zeta_2 < t$ there holds
    \begin{multline*}
        \langle U_{a_0}(\zeta_2,s) u_0, U^{*}_{a_0}(\zeta_2,t) v_0 \rangle_{L_2(D)}
        \\
        = \langle U_{a_0}(\zeta_2,\zeta_1) U_{a_0}(\zeta_1,s) u_0, U^{*}_{a_0}(\zeta_2,t) v_0 \rangle_{L_2(D)}
        \\
        = \langle  U_{a_0}(\zeta_1,s) u_0, U^{*}_{a_0}(\zeta_1, \zeta_2) U^{*}_{a_0}(\zeta_2,t) v_0 \rangle_{L_2(D)}
        \\
        = \langle  U_{a_0}(\zeta_1,s) u_0, U^{*}_{a_0}(\zeta_1,t) v_0 \rangle_{L_2(D)}.
    \end{multline*}
    Therefore the assignment
    \begin{align*}
        (s, t) \ni \zeta \mapsto \langle & U_{a_0}(\zeta,s) u_0, U^{*}_{a_0}(\zeta,t) v_0 \rangle_{L_2(D)}
        \\
        = {} & \langle U_{a_0}(\zeta,s) u_0, U^{*}_{a_0}(\zeta,t) v_0 \rangle_{L_p(D), L_{p'}(D)}
        \\
        = {} & \langle U_{a_0}(\zeta,s) u_0, U^{*}_{a_0}(\zeta,t) v_0 \rangle_{L_p(D), L_{p'}(D)}
    \end{align*}
    is constant (denote its value by $A$).  If we let $\zeta \nearrow t$, then $U_{a_0}(\zeta,s) u_0$ converges, by Proposition~\ref{prop:higher_orders-p-q-continuity}, in the $L_p(D)$\nobreakdash-\hspace{0pt}norm to $U_{a_0}(t,s) u_0$ and $U^{*}_{a_0}(\zeta,t) v_0$ converges, by Proposition~\ref{prop:higher_orders-skew_product-p-adjoint_(ii)}, in the $L_{p'}(D)$\nobreakdash-\hspace{0pt}norm to $v_0$, consequently $\langle U_{a_0}(t,s) u_0, v_0 \rangle_{L_p(D), L_{p'}(D)} = A$.  If we let $\zeta \searrow s$, then $U_{a_0}(\zeta,s) u_0$ converges, by Proposition~\ref{prop:higher_orders-skew_product-p_ii}, in the $L_p(D)$\nobreakdash-\hspace{0pt}norm to $u_0$ and $U^{*}_{a_0}(\zeta,t) v_0$ converges, by Propositions~\ref{prop:higher_orders-skew_product-p-q-adjoint} and \ref{prop:higher_orders-skew_product-p-adjoint_(ii)}, in the $L_{p'}(D)$\nobreakdash-\hspace{0pt}norm to $U^{*}_{a_0}(s,t) v_0$, consequently $\langle u_0, U^{*}_{a_0}(s,t) v_0 \rangle_{L_p(D), L_{p'}(D)} = A$.  This concludes the proof. \qed
\end{proof}

It follows from Proposition~\ref{prop:dual-Lp-q} that the linear operator $\allowbreak U^{*}_{a_0}(s, t) \in \mathcal{L}(L_{p'}(D))$ is the dual (in the functional-analytic sense) of $U_{a_0}(t,s) \in \mathcal{L}(L_p(D))$.

In the light of the above, the following counterpart to Proposition~\ref{prop:higher_orders-skew_product-p-q-estimates_2} holds.

\begin{proposition}
\label{prop:higher_orders-skew_product-p-q-estimates-adjoint}
  There are constants $M \ge 1$ and $\gamma \in \RR$, the same as in Proposition~\textup{\ref{prop:higher_orders-skew_product-p-q-estimates}}, such that
    \begin{equation*}
      \lVert U^{*}_{a_{0}}(s,t) \rVert_{\mathcal{L}(L_p(D), L_q(D))} \le M (t-s)^{-\tfrac{N}{2} \bigl(\tfrac{1}{p} - \tfrac{1}{q}\bigr)} e^{\gamma (t-s)}
    \end{equation*}
    for $1 \le p \le q \le \infty$, $a_{0} \in Y_{0}$ and $(s,t)\in\dot{\Delta}$.
\end{proposition}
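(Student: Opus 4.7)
The plan is to derive the estimate by duality, transferring the bound on $U_{a_0}(t,s)$ from Proposition~\ref{prop:higher_orders-skew_product-p-q-estimates_2}. The central algebraic observation is that the involution $(p, q) \mapsto (q', p')$ preserves the inequality chain $1 \le p \le q \le \infty$ as well as the quantity $\tfrac{1}{p} - \tfrac{1}{q}$, so both the form and the constants $M$, $\gamma$ of the estimate match those of Proposition~\ref{prop:higher_orders-skew_product-p-q-estimates} automatically.

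For the bulk of the range, I would fix $1 \le p \le q \le \infty$ with $q > 1$ and $v_0 \in L_p(D)$. The first step is to extend the duality identity
\begin{equation*}
    \int_D u_0 \, (U^*_{a_0}(s,t) v_0) \, \dd x = \int_D (U_{a_0}(t,s) u_0) \, v_0 \, \dd x
\end{equation*}
from pairs $u_0, v_0 \in L_2(D)$, where it holds by Proposition~\ref{prop:dual-L2}, to arbitrary $u_0 \in L_{q'}(D)$ and $v_0 \in L_p(D)$. This extension uses the density of $L_r(D) \cap L_2(D)$ in $L_r(D)$ for $1 \le r < \infty$ (and the equality $L_\infty(D) \cap L_2(D) = L_\infty(D)$ on the bounded domain $D$), together with the boundedness properties of $U_{a_0}(t,s)$ and $U^*_{a_0}(s,t)$ between $L_p$ spaces furnished by Propositions~\ref{prop:higher_orders-skew_product-p-q-estimates_1} and~\ref{prop:higher_orders-skew_product-p-q-adjoint}. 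Next, H\"older's inequality with the conjugate pair $(p', p)$ and the estimate of Proposition~\ref{prop:higher_orders-skew_product-p-q-estimates_2} applied with source exponent $q'$ and target exponent $p'$ control the right-hand side by
\begin{equation*}
    M (t-s)^{-\tfrac{N}{2}(\tfrac{1}{p} - \tfrac{1}{q})} e^{\gamma(t-s)} \norm{u_0}_{L_{q'}(D)} \norm{v_0}_{L_p(D)},
\end{equation*}
using $\tfrac{1}{q'} - \tfrac{1}{p'} = \tfrac{1}{p} - \tfrac{1}{q}$. Since $L_q(D) = (L_{q'}(D))^*$ for $q > 1$, taking the supremum over $u_0$ with $\norm{u_0}_{L_{q'}(D)} \le 1$ yields the desired bound on $\norm{U^*_{a_0}(s,t) v_0}_{L_q(D)}$.

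Finally, the corner case $p = q = 1$ is handled by testing the same identity against $w_0 = \sgn(U^*_{a_0}(s,t) v_0) \in L_\infty(D)$: one obtains $\norm{U^*_{a_0}(s,t) v_0}_{L_1(D)} = \int_D v_0 \, (U_{a_0}(t,s) w_0) \, \dd x$, which is bounded by $M e^{\gamma(t-s)} \norm{v_0}_{L_1(D)}$ via the $L_\infty \to L_\infty$ estimate from Proposition~\ref{prop:higher_orders-skew_product-p-q-estimates_2}. The main technical obstacle throughout is the extension of Proposition~\ref{prop:dual-L2} to the asymmetric pairing $L_{q'}(D) \times L_p(D)$ with exponents outside $L_2$; once one confirms that both sides of the identity define continuous linear functionals in the relevant variables, everything else reduces to applying known bounds and a short algebraic manipulation of the exponents.
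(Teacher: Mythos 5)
Your proposal is correct and follows the same route the paper intends: the result is stated there without a separate proof, as an immediate consequence (``in the light of the above'') of the duality relation between $U^{*}_{a_0}(s,t)$ and $U_{a_0}(t,s)$ established in Propositions~\ref{prop:dual-L2} and~\ref{prop:dual-Lp-q}, combined with Proposition~\ref{prop:higher_orders-skew_product-p-q-estimates_2} and the invariance of $\tfrac{1}{p}-\tfrac{1}{q}$ under $(p,q)\mapsto(q',p')$. Your write-up simply makes explicit the density/extension step and the corner cases $p=q=1$ and $p=\infty$ that the paper leaves tacit.
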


\subsection{Continuous Dependence of Weak Solutions}
\label{subsect:dependence-weak-solutions}
\begin{lemma}
\label{lm:continuity-in-strong-top}
    Let $1 < p < \infty$ and $a_0 \in Y_0$.  Then the mapping
    \begin{equation*}
        \big[\, \dot{\Delta} \ni (s, t) \mapsto U_{a_{0}}(t, s) \in \mathcal{L}_{\mathrm{s}}(L_p(D)) \,\big]
    \end{equation*}
    is continuous.
\end{lemma}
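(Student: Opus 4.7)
The plan is to use the cocycle identity to factor $U_{a_0}(t_n, s_n)$ through an intermediate operator on a non-degenerate time slab, exploit that this intermediate operator is \emph{compact} on $L_p(D)$, and thereby upgrade weak convergence (obtained from the adjoint semigroup) to norm convergence. Fix $(s_n, t_n) \in \dot{\Delta}$ with $(s_n, t_n) \to (s, t) \in \dot{\Delta}$ and let $u_0 \in L_p(D)$. Choose $\tau_1, \tau_2$ with $s < \tau_1 < \tau_2 < t$; for all $n$ large enough one has $s_n < \tau_1 < \tau_2 < t_n$, so Proposition~\ref{Extension_identyty_cocycle} gives
\begin{equation*}
U_{a_0}(t_n, s_n)\, u_0 = U_{a_0}(t_n, \tau_2)\, K\, U_{a_0}(\tau_1, s_n)\, u_0, \qquad K \vcentcolon= U_{a_0}(\tau_2, \tau_1),
\end{equation*}
and analogously for $U_{a_0}(t, s)\, u_0$.

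The argument then rests on three ingredients. First, $K$ is a compact operator on $L_p(D)$: Proposition~\ref{prop:higher_order-compactness}, applied with initial time $\tau_1$ and terminal time $T_1 = \tau_2$ to any bounded $E \subset L_p(D)$, yields precompactness of the family of trajectories in $C([\tau_2, T], L_p(D))$; evaluating at the single time $\tau_2$ gives precompactness of $K(E)$ in $L_p(D)$. Second, $U_{a_0}(\tau_1, s_n)\, u_0 \rightharpoonup U_{a_0}(\tau_1, s)\, u_0$ weakly in $L_p(D)$: for each $v \in L_{p'}(D)$, Proposition~\ref{prop:dual-Lp-q} supplies
\begin{equation*}
\langle U_{a_0}(\tau_1, s_n)\, u_0, v \rangle_{L_p(D), L_{p'}(D)} = \langle u_0, U^{*}_{a_0}(s_n, \tau_1)\, v \rangle_{L_p(D), L_{p'}(D)},
\end{equation*}
and since $\tau_1$ is the fixed larger endpoint while $s_n \to s$, Proposition~\ref{prop:higher_orders-skew_product-p-adjoint_(ii)} (applied with exponent $p' \in (1, \infty)$) yields $U^{*}_{a_0}(s_n, \tau_1)\, v \to U^{*}_{a_0}(s, \tau_1)\, v$ in $L_{p'}(D)$. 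Third, for any fixed $w \in L_p(D)$ Proposition~\ref{prop:higher_orders-skew_product-p_ii} gives $U_{a_0}(t_n, \tau_2)\, w \to U_{a_0}(t, \tau_2)\, w$ in $L_p(D)$, while Proposition~\ref{prop:higher_orders-skew_product-p-q-estimates} provides the uniform bound $\sup_n \norm{U_{a_0}(t_n, \tau_2)}_{\mathcal{L}(L_p(D))} \le M e^{\gamma T}$.

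Combining these, set $w_n \vcentcolon= K\, U_{a_0}(\tau_1, s_n)\, u_0$ and $w \vcentcolon= K\, U_{a_0}(\tau_1, s)\, u_0$. Compactness of $K$ promotes the weak convergence of the second ingredient to the norm convergence $w_n \to w$ in $L_p(D)$. A standard triangle inequality,
\begin{equation*}
\norm{U_{a_0}(t_n, \tau_2)\, w_n - U_{a_0}(t, \tau_2)\, w}_{L_p(D)} \le \norm{U_{a_0}(t_n, \tau_2)}_{\mathcal{L}(L_p(D))}\, \norm{w_n - w}_{L_p(D)} + \norm{U_{a_0}(t_n, \tau_2)\, w - U_{a_0}(t, \tau_2)\, w}_{L_p(D)},
\end{equation*}
together with the uniform bound and pointwise convergence from the third ingredient, then shows that $U_{a_0}(t_n, s_n)\, u_0 \to U_{a_0}(t, s)\, u_0$ in $L_p(D)$, which is the desired strong continuity.

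The main obstacle I anticipate is continuity in the \emph{initial} time $s$: the preparatory results of Section~\ref{sect:weak-sol} emphasize continuity in the terminal time, and no direct ``forward-in-$s$'' continuity statement is available for the forward operator $U_{a_0}$. Access to the $s$-variable has to be routed through the adjoint via the duality of Proposition~\ref{prop:dual-Lp-q}, and this route only yields weak convergence in $L_p(D)$; converting this to norm convergence is possible precisely because of the smoothing effect encoded in Proposition~\ref{prop:higher_order-compactness}, whose use requires the strict inequality $\tau_1 < \tau_2$, that is, the presence of a positive ``gap'' inside $(s, t)$ on which a genuine parabolic regularization can take place.
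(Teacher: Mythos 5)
Your proposal is correct and rests on the same mechanism as the paper's proof: the $s$-variable is reached through duality with the adjoint semiprocess (Proposition~\ref{prop:dual-Lp-q} plus the adjoint analogue of the strong continuity in the terminal time), yielding only weak convergence, which is then upgraded to norm convergence by the parabolic smoothing encoded in Proposition~\ref{prop:higher_order-compactness}, while the $t$-variable is handled by strong continuity together with the uniform operator bound. The only difference is organizational: you perform a single three-factor decomposition through a fixed compact operator $K=U_{a_0}(\tau_2,\tau_1)$ and invoke complete continuity of compact operators, whereas the paper treats the limits $s_m\to s$ and $t_m\to t$ separately (using precompactness of the orbit set directly) and then combines them via the cocycle identity and Corollary~\ref{cor:p-q}.
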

\begin{proof}
   If $t_m \to t > s$ as $m \to \infty$, $U_{a_0}(t_m, s)u_0 \to U_{a_{0}}(t, s)u_0$ in $L_p(D)$, by Proposition~\ref{prop:higher_orders-p-q-continuity}.

    Assume that $s_m \to s < t$ as $m \to \infty$.  Fix $u_0 \in L_p(D)$ and $v_0 \in L_{p'}(D)$.  We have, by Proposition~\ref{prop:dual-Lp-q} and the adjoint equation analog of~Proposition~\ref{prop:higher_orders-p-q-continuity},
    \begin{multline*}
        \langle U_{a_{0}}(t, s_m)u_0, v_0 \rangle_{L_p(D), L_{p'}(D)} = \langle u_0, U_{a_{0}}^{*}(s_m, t)v_0 \rangle_{L_p(D), L_{p'}(D)}
        \\
        \to \langle u_0,U_{a_{0}}^{*}(s, t)v_0 \rangle_{L_p(D), L_{p'}(D)} = \langle U_{a_{0}}(t, s)u_0, v_0 \rangle_{L_p(D), L_{p'}(D)},
    \end{multline*}
    so $U_{a_{0}}(t, s_m)u_0 \rightharpoonup U_{a_{0}}(t, s)u_0$ in $L_p(D)$.  As $\{\, U_{a_{0}}(t, s_m)u_0 : m \in \NN \,\}$ is, by Proposition~\ref{prop:higher_order-compactness}, precompact in $L_p(D)$, the convergence is in the norm.

    Finally, assume that $s_m \to s$ and $t_m \to t$ with $s < t$, and fix $u_0 \in L_p(D)$.  We can assume that $s_m < (s + t)/2 < t$ for all $m$.  By the previous paragraph, $U_{a_{0}}( (s + t)/2, s_m)u_0 \to U_{a_{0}}((s + t)/2, s)u_0$ in $L_p(D)$.  Corollary~\ref{cor:p-q} implies that
    \begin{multline*}
        U_{a_{0}}(t_m, s_m)u_0 =U_{a_{0}}(t_m, \tfrac{1}{2}(s + t)) (U_{a_{0}}(\tfrac{1}{2}(s + t), s_m)u_0
        \\
        \to U_{a_{0}}(t, \tfrac{1}{2}(s + t)) (U_{a_{0}}(\tfrac{1}{2}(s + t), s)u_0 =U_{a_{0}}(t, s)u_0,
    \end{multline*}
    where the convergence is in $L_p(D)$, too. \qed
\end{proof}

\begin{proposition}
\label{prop:continuity-in-norm-top}
    Let $1 < p < \infty$ and $a_0\in Y_0$. Then the mapping
    \begin{equation*}
       \big[\, \dot{\Delta} \ni (s, t) \mapsto U_{a_0}(t, s) \in \mathcal{L}(L_p(D)) \,\big]
    \end{equation*}
    is continuous.
\end{proposition}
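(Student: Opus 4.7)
The plan is to upgrade the strong-operator continuity from Lemma~\ref{lm:continuity-in-strong-top} to operator-norm continuity by combining compactness of an intermediate evolution operator with a duality argument. Given a sequence $(s_m, t_m) \to (s_0, t_0)$ in $\dot{\Delta}$, I will pick auxiliary times $\zeta_1, \zeta_2$ with $s_0 < \zeta_1 < \zeta_2 < t_0$, so that eventually $s_m < \zeta_1 < \zeta_2 < t_m$ as well, and use the cocycle identity (Proposition~\ref{Extension_identyty_cocycle}) to factor
\[
U_{a_0}(t_m, s_m) = U_{a_0}(t_m, \zeta_2) \circ K \circ U_{a_0}(\zeta_1, s_m), \qquad K := U_{a_0}(\zeta_2, \zeta_1),
\]
and similarly at $(s_0, t_0)$. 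The key point is that $K$ is the same operator for every $m$ and is compact on $L_p(D)$: applying Proposition~\ref{prop:higher_order-compactness} with $s = \zeta_1$, $T_1 = \zeta_2$ to the closed unit ball $E \subset L_p(D)$ and evaluating the resulting precompact family of trajectories at $t = \zeta_2$ shows that $K(E)$ is precompact.

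Next I will split
\[
U_{a_0}(t_m, s_m) - U_{a_0}(t_0, s_0) = \bigl[U_{a_0}(t_m, \zeta_2) - U_{a_0}(t_0, \zeta_2)\bigr] K \, U_{a_0}(\zeta_1, s_m) + U_{a_0}(t_0, \zeta_2) \, K \bigl[U_{a_0}(\zeta_1, s_m) - U_{a_0}(\zeta_1, s_0)\bigr]
\]
and treat the two summands separately. For the first, $U_{a_0}(\zeta_1, s_m)$ is uniformly bounded in $\mathcal{L}(L_p(D))$ by Proposition~\ref{prop:higher_orders-skew_product-p-q-estimates_2}, while $U_{a_0}(t_m, \zeta_2) \to U_{a_0}(t_0, \zeta_2)$ in the strong operator topology by Lemma~\ref{lm:continuity-in-strong-top}; since a uniformly bounded SOT-null family converges uniformly on precompact sets and $K(E)$ is precompact, the difference composed on the right of $K$ tends to zero in operator norm.

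The main obstacle lies in the second summand, where $K$ appears on the \emph{left} of the SOT-null difference; this configuration does not generally upgrade to norm convergence (simple rank-one counterexamples show that collective compactness of $\{U_{a_0}(\zeta_1, s_m)\}$ alone is insufficient). My remedy will be to pass to the $L_{p'}$-dual via Proposition~\ref{prop:dual-Lp-q}, rewriting the norm of this summand as
\[
\bigl\|\bigl[U^*_{a_0}(s_m, \zeta_1) - U^*_{a_0}(s_0, \zeta_1)\bigr] \, K^* \, U^*_{a_0}(\zeta_2, t_0)\bigr\|_{\mathcal{L}(L_{p'}(D))},
\]
in which $K^* U^*_{a_0}(\zeta_2, t_0)$ is compact (Schauder's theorem) and now sits on the \emph{right}. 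The remaining task is to verify that $s \mapsto U^*_{a_0}(s, \zeta_1)$ is strongly continuous in $L_{p'}(D)$ at $s_0$. This is the adjoint analog of Lemma~\ref{lm:continuity-in-strong-top}, which follows by repeating that lemma's argument for the (parabolic) adjoint equation \eqref{eq-adjoint-hull}+\eqref{bc-adjoint-hull} — a symmetric reasoning already used implicitly in the proof of Lemma~\ref{lm:continuity-in-strong-top} itself when it invokes the "adjoint equation analog of Proposition~\ref{prop:higher_orders-p-q-continuity}". Once this is in hand, the same SOT-null-composed-with-compact-on-the-right argument disposes of the second summand, and the two estimates together give the desired operator-norm continuity.
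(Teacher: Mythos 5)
Your argument is correct, and it reaches the conclusion by a genuinely different route than the paper. The paper argues by contradiction: it picks unit vectors $u_m$ witnessing a failure of norm convergence, uses Proposition~\ref{prop:higher_order-compactness} to extract $L_p(D)$\nobreakdash-convergent subsequences of $U_{a_0}(t_m,s_m)u_m$ and $U_{a_0}(t,s)u_m$, and then shows via Proposition~\ref{prop:dual-Lp-q} and the adjoint analog of Lemma~\ref{lm:continuity-in-strong-top} that $\langle (U_{a_0}(t_m,s_m)-U_{a_0}(t,s))u_m, v_0\rangle \to 0$ for every $v_0 \in L_{p'}(D)$, forcing the two limits to coincide. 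You instead give a direct factorization $U_{a_0}(t_m,s_m)=U_{a_0}(t_m,\zeta_2)\,K\,U_{a_0}(\zeta_1,s_m)$ through a fixed compact $K=U_{a_0}(\zeta_2,\zeta_1)$ and estimate the two resulting summands by the principle that a uniformly bounded SOT\nobreakdash-null sequence composed with a compact operator on the right is norm\nobreakdash-null --- applying it once directly and once after dualizing. The ingredients are the same (trajectory compactness, duality, strong continuity of the adjoint family), but your version is more quantitative: it yields an explicit modulus for $\lVert U_{a_0}(t_m,s_m)-U_{a_0}(t_0,s_0)\rVert_{\mathcal{L}(L_p(D))}$ and isolates the operator\nobreakdash-theoretic mechanism cleanly, including the correct observation that the compact factor must sit on the right of the SOT\nobreakdash-null difference (hence the passage to adjoints for the second summand). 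It also makes explicit the intermediate\nobreakdash-time factorization that the paper's appeal to Proposition~\ref{prop:higher_order-compactness} (stated for fixed initial time) leaves implicit when $s_m$ varies. The paper's proof is shorter and reuses the same contradiction template as Proposition~\ref{prop:higher_orders-skew_product-norm_continuity}; both are valid.
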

\begin{proof}
     Let $s_m \to s$ and $t_m \to t$ with $s < t$.  Suppose to the contrary that there are $\epsilon > 0$ and $(u_m)_{m = 1}^{\infty} \subset L_p(D)$, $\norm{u_m}_{L_p(D)} = 1$, such that
    \begin{equation*}
        \norm{U_{a_0}(t_m, s_m) u_m - U(t, s) u_m}_{L_p(D)} \ge \epsilon, \quad m = 1, 2, 3, \dots.
    \end{equation*}
    It follows from Proposition~\ref{prop:higher_order-compactness} that, after possibly taking a subsequence and relabelling, we can assume that $U_{a_0}(t_m, s_m) u_m$ converge to $\tilde{u}$ and $U_{a_0}(t, s) u_m$ converge to $\hat{u}$, both in $L_p(D)$.  For any $v_0 \in L_{p'}(D)$ we have, by Proposition~\ref{prop:dual-Lp-q},
    \begin{multline*}
        \langle (U_{a_0}(t_m, s_m) - U_{a_0}(t, s)) u_m, v_0 \rangle_{L_p(D), L_{p'}(D)}
        \\
        = \langle u_m, (U_{a_0}^{*}(s_m, t_m) - U_{a_0}^{*}(s, t))v_0 \rangle_{L_p(D), L_{p'}(D)}.
    \end{multline*}
    Since $\norm{u_m}_{L_p(D)} = 1$, we conclude from the adjoint equation analog of Lemma~\ref{lm:continuity-in-strong-top} that the above expression converges to zero as $m \to \infty$.  Consequently $\tilde{u} = \hat{u}$, a contradiction. \qed
\end{proof}

\begin{proposition}
  \label{prop:higher_orders-skew_product-continuity}
    Assume, in addition, \eqref{a: compactness of Y }. For $1 < p < \infty$ the mapping
    \begin{equation*}
      \bigl[\, Y_0 \times \dot{\Delta} \times  L_p(D) \ni (a_{0}, s, t, u_0) \mapsto U_{a_{0}}(t, s) u_0 \in L_p(D) \, \bigl]
    \end{equation*}
    is continuous.
\end{proposition}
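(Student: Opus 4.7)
The approach I would take is to combine sequential compactness of solution families with the uniqueness of weak solutions. It suffices to take $(a_{0,m}, s_m, t_m, u_{0,m}) \to (a_0, s, t, u_0)$ in $Y_0 \times \dot{\Delta} \times L_p(D)$ (with $s < t$) and show $U_{a_{0,m}}(t_m, s_m) u_{0,m} \to U_{a_0}(t,s) u_0$ in $L_p(D)$; a routine subsequence-of-subsequence device reduces everything to extracting an $L_p(D)$-convergent subsequence with the right limit. The uniform bound $\|U_{a_{0,m}}(t_m, s_m)\|_{\mathcal{L}(L_p(D))} \le M e^{\gamma T}$ from Proposition~\ref{prop:higher_orders-skew_product-p-q-estimates} lets me replace $u_{0,m}$ by $u_0$ at once via the triangle inequality. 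I then choose $T_1 \in (s,t)$ with $T_1 > s_m$ for all sufficiently large $m$, and set $u_m(\tau) := U_{a_{0,m}}(\tau, s_m) u_0$.

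Since $T_1 - s_m$ is eventually bounded away from $0$, Proposition~\ref{prop:local-regularity-higher_order} furnishes a uniform $C^{\alpha/2,\alpha}([T_1,T] \times D_0)$-bound on $(u_m)$ for every compact $D_0 \subset D$, and Proposition~\ref{prop:higher_orders-skew_product-p-q-estimates} supplies the pointwise $L_\infty(D)$-control needed to imitate the diagonal Ascoli--Arzel\`a argument inside the proof of Proposition~\ref{prop:higher_order-compactness}. This yields, after extraction, a function $\tilde u$ defined on $(s, T]$ with $u_m \to \tilde u$ in $C([T_1', T], L_p(D))$ for every $T_1' \in (s, T)$ (diagonalise over a sequence $T_1' \searrow s$ to secure a single subsequence). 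Standard energy estimates simultaneously give $u_m \rightharpoonup \tilde u$ weakly in $L_2((T_1',T), V)$ on each such interval.

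I expect the main obstacle to be the next step: passing to the limit in the weak formulation of $(\widehat{\mathrm{ME}})_{a_{0,m}} + (\widehat{\mathrm{BC}})_{a_{0,m}}$ on $[T_1', T]$ to identify $\tilde u$ as a weak solution of $(\widehat{\mathrm{ME}})_{a_0} + (\widehat{\mathrm{BC}})_{a_0}$. The time-derivative term and the datum term $(u_m(T_1'), v)_{L_2(D)} \psi(T_1')$ pass through by strong $L_2$-convergence from Step~2. The delicate terms are integrals such as $\int\!\!\int a_{ij,m}(\tau,x)\, \partial_i u_m(\tau,x)\, \partial_j v(x)\, \psi(\tau)\, dx\, d\tau$ together with the analogous first-order, zero-order and Robin boundary pieces, which pair a weakly-$\!*$ convergent coefficient against an only weakly convergent gradient. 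Here hypothesis~\eqref{a: compactness of Y } is decisive: the pointwise a.e.\ convergence $a_{ij,m} \to a_{ij}$ combined with the uniform $L_\infty$ bound forces, by dominated convergence, the \emph{strong} $L_2((T_1',T) \times D)$-convergence $a_{ij,m}\, \psi\, \partial_j v \to a_{ij}\, \psi\, \partial_j v$, and the resulting strong--weak pairing converges as required. The Robin boundary integral is handled identically, using continuity of the trace $V \to L_2(\partial D)$.

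To conclude, I would check that $\tilde u(\tau) \to u_0$ in $L_p(D)$ as $\tau \searrow s$: the energy identity for $u_m$ on $[s_m, \tau]$ makes $\|u_m(\tau) - u_0\|_{L_2(D)}$ uniformly small as $\tau - s_m \to 0$, and this transfers to $\tilde u$ via Step~2 and Proposition~\ref{prop:higher_orders-p-q-continuity}. Uniqueness of the global weak solution then forces $\tilde u(\tau) = U_{a_0}(\tau, s) u_0$ for all $\tau \in (s, T]$. Finally, combining the $C([T_1', T], L_p(D))$-convergence $u_m \to \tilde u$ with continuity of $\tilde u$ at $t$ yields $U_{a_{0,m}}(t_m, s_m) u_0 = u_m(t_m) \to \tilde u(t) = U_{a_0}(t, s) u_0$ in $L_p(D)$, which is the desired conclusion.
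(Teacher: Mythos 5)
Your strategy is genuinely different from the paper's. The paper disposes of the case $2\le p<\infty$ by citing \cite[Props.~2.2.12 and 2.2.13]{Monogr} (continuity of $(a_0,s,t,u_0)\mapsto U_{a_0}(t,s)u_0$ from $Y_0\times\dot{\Delta}\times L_2(D)$ into $L_p(D)$), and then obtains $1<p<2$ by a short compactness-plus-duality argument, pairing against $U^{*}_{a_{0,m}}(s_m,t_m)v$ and applying the already-proved case with $p'>2$ to the adjoint problem. You instead reprove the underlying continuity result from scratch: compactness via interior H\"older estimates, passage to the limit in the weak formulation (where \eqref{a: compactness of Y } indeed enters exactly as you say, upgrading a weak-\textup{*}--against--weak pairing to a strong--against--weak one for the gradient terms), and identification by uniqueness. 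For initial data in $L_2(D)$ this outline is sound, modulo the standard uniform energy estimates that you invoke but do not prove; it is essentially a reconstruction of the cited results of \cite{Monogr}. One small inaccuracy: \eqref{a: compactness of Y } does \emph{not} assert pointwise a.e.\ convergence of $c_{0,m}$, only of $a_{ij,m}$, $a_{i,m}$, $b_{i,m}$ and $d_{0,m}$; the zero-order term still passes to the limit, but for a different reason (the factor $u_m v\psi$ converges strongly in $L_1$, so weak-\textup{*} convergence of $c_{0,m}$ suffices), not ``identically'' to the gradient terms.

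The genuine gap is the case $1<p<2$. There $u_0\in L_p(D)$ need not belong to $L_2(D)$, and every ingredient of your identification step is $L_2$-based: weak solutions are defined only for $L_2(D)$ data, the energy identity on $[s_m,\tau]$ cannot be written, the quantity $\lVert u_m(\tau)-u_0\rVert_{L_2(D)}$ may be meaningless, and ``uniqueness of the global weak solution'' is not available for such $u_0$ (recall that $U_{a_0,p}(t,s)$ for $p\ne 2$ is defined only by extension from $L_2(D)\cap L_p(D)$, Proposition~\ref{prop:higher_orders-skew_product-p_i}). The argument can be repaired: run your proof for $u_0\in L_2(D)$, which is dense in $L_p(D)$, note that the Ascoli--Arzel\`a step gives convergence in every $L_q(D)$ with $q<\infty$, and then extend to all of $L_p(D)$ by a three-term triangle inequality using the uniform bound $\lVert U_{a_0}(t,s)\rVert_{\mathcal{L}(L_p(D))}\le Me^{\gamma T}$ from Proposition~\ref{prop:higher_orders-skew_product-p-q-estimates_2}. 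But as written your proof does not cover $1<p<2$, which is exactly the half of the statement for which the paper has to do genuine work (its duality trick being an alternative to this missing density argument).
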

\begin{proof}
  It follows from~\cite[Props.~2.2.12 and~2.2.13]{Monogr} that, for $2 \le p < \infty$, the mapping
    \begin{equation*}
     \big[\, Y_0 \times \dot{\Delta} \times  L_2(D) \ni (a_{0}, s, t, u_0) \mapsto U_{a_{0}}(t, s) u_0 \in L_p(D) \,\big]
    \end{equation*}
  is continuous, too.

  To conclude the proof it suffices to show that for any $1 < p < 2$ the mapping
  \begin{equation*}
     \big[\, Y_0 \times \dot{\Delta} \times  L_p(D) \ni (a_0, s, t, u_0) \mapsto U_{a_0}(t, s) u_0 \in L_2(D) \,\big]
    \end{equation*}
    is continuous.  Observe that if we have $a_{0,m} \to a_0 \in Y_0$, $s_m \to s$, $t_m \to t$ with $s_m < t_n$ and $s < t$, and $u_{0,m} \to u_0 \in L_p(D)$, then from Proposition~\ref{prop:higher_order-compactness} it follows that, after possibly choosing a subsequence, there is $w \in L_2(D)$  such that $U_{a_{0,m}}(t_m, s_m) u_{0,m} \to w$ in $L_2(D)$.  Consequently, $\langle U_{a_{0,m}}(t_m, s_m) u_{0,m}, v \rangle_{L_2(D)} \to \langle w, v \rangle_{L_2(D)}$ as $m \to \infty$, for any $v \in L_2(D)$.  On the other hand, one has, by Proposition~\ref{prop:dual-Lp-q},
    \begin{equation*}
        \langle U_{a_{0,m}}(t_m, s_m) u_{0,m}, v \rangle_{L_2(D)} = \langle  u_{0,m}, U^{*}_{a_{0,m}}(s_m, t_m) v \rangle_{L_p(D), L_{p'}(D)}.
    \end{equation*}
    As $2 < p' < \infty$, an application of the result already obtained to the adjoint equation yields that $U^{*}_{a_{0,m}}(s_m,t_m) v$ converges, as $m \to \infty$, to $U^{*}_{a_0}(s,t) v$ in $L_{p'}(D)$.  As $u_{0,m}$ converges to $u_0$ in $L_p(D)$, we have that $\langle  u_{0,m}, U^{*}_{a_{0,m}}(s_m,t_m) v \rangle_{L_p(D), L_{p'}(D)}$ converges to $\langle  u_0, U^{*}_{a_0}(s,t) v \rangle_{L_p(D), L_{p'}(D)}$, which is, by Proposition~\ref{prop:dual-Lp-q}, equal to $\langle  U_{a_0}(t,s) u_0,  v \rangle_{L_2(D)}$.  As $v \in L_2(D)$ is arbitrary, we have $w = U_{a_0}(t,s) u_0$. \qed
\end{proof}

\begin{proposition}
  \label{prop:higher_orders-skew_product-norm_continuity}
    Assume, in addition, $\eqref{a: compactness of Y }$. For $1 < p < \infty$ the mapping
    \begin{equation*}
      \bigl[\, Y_{0} \times \dot{\Delta} \ni (a_{0}, s, t) \mapsto U_{a_{0}}(t, s) \in \mathcal{L}(L_p(D)) \, \bigr]
    \end{equation*}
    is continuous.
\end{proposition}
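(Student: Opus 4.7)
The plan is to upgrade the joint strong-operator continuity from Proposition~\ref{prop:higher_orders-skew_product-continuity} to norm continuity by running a contradiction argument parallel to the proof of Proposition~\ref{prop:continuity-in-norm-top}, where the analogous upgrade was performed for fixed $a_0$. Suppose, contrary to the conclusion, that there exist sequences $a_{0,m} \to a_0$ in $Y_0$ and $(s_m, t_m) \to (s, t) \in \dot{\Delta}$, together with unit vectors $u_m \in L_p(D)$, such that $\norm{U_{a_{0,m}}(t_m, s_m) u_m - U_{a_0}(t, s) u_m}_{L_p(D)} \ge \epsilon$ for all $m$ and some $\epsilon > 0$.

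The first step is to pass to subsequences so that both $U_{a_{0,m}}(t_m, s_m) u_m$ and $U_{a_0}(t, s) u_m$ converge in $L_p(D)$. For the former, pick $T_1^{*} \in (s, t)$; for $m$ large one has $s_m < T_1^{*} < t_m$, and writing $U_{a_{0,m}}(t_m, s_m) u_m = U_{a_{0,m}}(t_m, T_1^{*}) v_m$ with $v_m \vcentcolon= U_{a_{0,m}}(T_1^{*}, s_m) u_m$, the estimate~\eqref{eq:L_p estymation+} ensures that $(v_m)$ is bounded in $L_p(D)$. Fixing $T_1^{**} \in (T_1^{*}, t)$ and applying Proposition~\ref{prop:higher_order-compactness} with starting time $T_1^{*}$, the family $\{[\,t \mapsto U_{a_{0,m}}(t, T_1^{*}) v_m\,]\}$ is precompact in $C([T_1^{**}, T], L_p(D))$, so evaluation at $t_m \to t \in [T_1^{**}, T]$ yields, along a subsequence, $U_{a_{0,m}}(t_m, s_m) u_m \to \tilde u$ in $L_p(D)$. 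For the latter sequence, compactness of the fixed operator $U_{a_0}(t, s)$ — itself a consequence of Proposition~\ref{prop:higher_order-compactness} applied with $Y_0 \vcentcolon= \{a_0\}$ and $E$ the unit ball of $L_p(D)$ — provides a further subsequence along which $U_{a_0}(t, s) u_m \to \hat u$ in $L_p(D)$, with $\norm{\tilde u - \hat u}_{L_p(D)} \ge \epsilon$.

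The contradiction is extracted by testing against an arbitrary $v_0 \in L_{p'}(D)$ and using Proposition~\ref{prop:dual-Lp-q} to write
\begin{equation*}
    \langle U_{a_{0,m}}(t_m, s_m) u_m - U_{a_0}(t, s) u_m, v_0 \rangle_{L_p(D), L_{p'}(D)} = \langle u_m, U^{*}_{a_{0,m}}(s_m, t_m) v_0 - U^{*}_{a_0}(s, t) v_0 \rangle_{L_p(D), L_{p'}(D)}.
\end{equation*}
The adjoint-equation analog of Proposition~\ref{prop:higher_orders-skew_product-continuity}, applied with exponent $p'$, gives $U^{*}_{a_{0,m}}(s_m, t_m) v_0 \to U^{*}_{a_0}(s, t) v_0$ in $L_{p'}(D)$; since $\norm{u_m}_{L_p(D)} = 1$, the right-hand side tends to zero. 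Thus $\langle \tilde u - \hat u, v_0 \rangle_{L_p(D), L_{p'}(D)} = 0$ for every $v_0 \in L_{p'}(D)$, forcing $\tilde u = \hat u$, a contradiction.

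The main obstacle I anticipate is the first extraction step: Proposition~\ref{prop:higher_order-compactness} is stated for a fixed starting time, so to handle the simultaneously varying $a_{0,m}$, left endpoints $s_m$, and data $u_m$, one must split off an interior time $T_1^{*} \in (s, t)$, absorb the variation in the left endpoint into the bounded family $v_m$, and only then invoke the compactness of Proposition~\ref{prop:higher_order-compactness} with the fixed starting time $T_1^{*}$; convergence at $t_m$ then follows from uniform convergence on a suitable closed subinterval together with $t_m \to t$.
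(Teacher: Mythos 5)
Your proof is correct. It uses the same basic toolkit as the paper's own argument --- a contradiction scheme combining the compactness of Proposition~\ref{prop:higher_order-compactness} with the duality identity of Proposition~\ref{prop:dual-Lp-q} and the adjoint-equation analog of Proposition~\ref{prop:higher_orders-skew_product-continuity} --- but arranges it differently. The paper normalizes $s=0$, extracts convergent subsequences of the two \emph{midpoint} sequences $U_{a_{0,m}}(t_m/2,0)u_m$ and $U_{a_{0}}(t/2,0)u_m$, identifies their common limit $\tilde u$ by exactly your weak-convergence-plus-precompactness device, and only then transports $\tilde u$ forward to time $t$ through the cocycle identity and the joint strong continuity of Proposition~\ref{prop:higher_orders-skew_product-continuity}. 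You instead identify the limits $\tilde u$, $\hat u$ of the two sequences directly at the final time and kill their difference by testing against $L_{p'}(D)$; this is the scheme of the paper's Proposition~\ref{prop:continuity-in-norm-top} promoted to varying $a_0$, and it saves the forward-transport step. Your version also treats the varying left endpoints $s_m$ explicitly, via the cocycle split at an interior time $T_1^{*}$ that reduces matters to the fixed-starting-time form of Proposition~\ref{prop:higher_order-compactness} --- a point the paper suppresses by assuming $s=0$ ``in order not to overburden the notation.'' Both arguments rest on the same ingredient left implicit in the paper, namely the adjoint analog of Proposition~\ref{prop:higher_orders-skew_product-continuity} for the exponent $p'$, so neither is cheaper in its prerequisites; yours is marginally shorter and more uniform.
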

\begin{proof}
     In order not to overburden the notation we assume $s = 0$.

    Let $(a_{0, m})_{m = 1}^{\infty} \subset Y_0$ be a sequence converging to $a_0$ as $m \to \infty$, and let $(t_m)_{m = 1}^{\infty} \subset (0,T]$ be a sequence converging to $t > 0$ as $m \to \infty$.  Suppose to the contrary that $\norm{U_{a_{0,m}}(t_m, 0) -U_{a_{0}}(t, 0)}_{\mathcal{L}(L_p(D))}$ does not converge to $0$, that is, there exist $\epsilon > 0$ and a sequence $(u_m)_{m = 1}^{\infty} \subset L_p(D)$, $\norm{u_m}_{L_p(D)} = 1$ for all $m$, such that
    \begin{equation*}
        \norm{U_{a_{0,m}}(t_m, 0) u_m -U_{a_{0}}(t, 0) u_m}_{L_p(D)} \ge \epsilon
    \end{equation*}
    for all $m$.

    It follows from Proposition~\ref{prop:higher_order-compactness} that, after possibly extracting a subsequence, we can assume that $U_{a_{0,m}}(t_m/2, 0) u_m$ and $U_{a_{0}}(t/2, 0) u_m$ converge, as $m \to \infty$, in the $L_p(D)$\nobreakdash-\hspace{0pt}norm. We claim that both converge to the same $\tilde{u}$.  Indeed, it suffices to check that the difference $(U_{a_{0,m}}(t_m/2, 0) -U_{a_{0}}(t/2, 0)) u_m$ converges to zero in $L_p(D)$, which is, in~light of the equalities
    \begin{multline*}
        \langle (U_{a_{0,m}}(t_m/2, 0) -U_{a_{0}}(t/2, 0)) u_m, v \rangle_{L_p(D), L_{p'}(D)}
        \\
        = \langle u_m, (U^{*}_{a_{0,m}}(0, t_m/2) -U^{*}_{a_{0}}(0, t/2)) v \rangle_{L_p(D), L_{p'}(D)}, \quad v \in L_{p'}(D),
    \end{multline*}
    a consequence of the analog for the adjoint equation of Proposition~\ref{prop:higher_orders-skew_product-continuity}.

    Proposition~\ref{prop:higher_orders-skew_product-continuity} implies that
    \begin{multline*}
        \norm{U_{a_{0,m}}(t_m, 0) u_m -U_{a_{0}}(t, t/2) \tilde{u}}_{L_p(D)}
        \\
        = \norm{U_{a_{0,m}}(t_m, t_m/2) (U_{a_{0,m}}(t_m/2, 0) u_m) -U_{a_{0}}(t, t/2) \tilde{u}}_{L_p(D)} \to 0,
    \end{multline*}
    and
    \begin{multline*}
        \norm{U_{a_{0}}(t, 0) u_m -U_{a_{0}}(t, t/2) \tilde{u}}_{L_p(D)}
        \\
        = \norm{U_{a_{0}}(t, t/2) (U_{a_{0}}(t/2, 0) u_m) - U_{a_{0}}(t, t/2) \tilde{u}}_{L_p(D)}
        \\
        \le \norm{U_{a_{0}}(t, t/2)}_{\mathcal{L}(L_p(D))} \norm{U_{a_{0}}(t/2, 0) u_m - \tilde{u}}_{L_p(D)} \to 0
      \end{multline*}
    therefore $\norm{U_{a_{0,m}}(t_m, 0) u_m -U_{a_{0}}(t, 0) u_m}_{L_p(D)}$, converges to zero, a contradiction. \qed
\end{proof}

\section{Mild Solutions}
\label{sect:mild}
In the present section we assume \eqref{a: boundary regularity}, \eqref{a: boundedness} and that $Y$ as in Definition~\ref{def:Y} is such that its flattening $Y_0$ satisfies \eqref{Ellipticity}.  Occasionally we will assume \eqref{a:only_c_continuous}.

\begin{definition}[Multiplication Operator]
For  $a \in Y$, $1 \le p\le \infty$ and $0\le t\le T$ we define multiplication operator $C^{1}_{a}(t)\colon L_p(D)\to L_p(D)$ as follows
\begin{equation*}
    C_{a}^{1}(t)v=c_1(t,\cdot)v.
\end{equation*}
\end{definition}

The $C^{1}_{a}(t)$ operator is well defined as long as assumption~\eqref{a: boundedness} holds. To be more precise we use a corollary from assumption~\eqref{a: boundedness} on $t$\nobreakdash-\hspace{0pt}sections of $c_1$.

\begin{lemma} [Boundedness of Multiplication Operator]
\label{Boundednes of Multiplication Operator}
The multiplication operator $C_{a}^1(t)$ is linear and bounded uniformly with respect to a.e.\ $0< t< T $ and $a\in Y$.
\end{lemma}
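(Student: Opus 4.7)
The plan is straightforward: linearity is immediate from the pointwise definition of $C^1_a(t)$, so the whole content is the uniform bound. I would derive it from the standard pointwise estimate
\begin{equation*}
  \lVert C^1_a(t) v \rVert_{L_p(D)} = \lVert c_1(t,\cdot) v \rVert_{L_p(D)} \le \lVert c_1(t,\cdot) \rVert_{L_\infty(D)} \, \lVert v \rVert_{L_p(D)},
\end{equation*}
valid for every $1 \le p \le \infty$ (for $p < \infty$ by H\"older applied inside the integral, for $p = \infty$ directly from the essential supremum).

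The next step is to control $\lVert c_1(t,\cdot) \rVert_{L_\infty(D)}$ uniformly. Assumption~\eqref{a: boundedness} gives $c_1 \in L_\infty((0,T) \times D)$, and the remark immediately following~\eqref{a: boundedness} (an application of Fubini's theorem) states exactly that $\lVert c_1(t, \cdot) \rVert_{L_\infty(D)} \le \lVert c_1 \rVert_{L_\infty((0,T) \times D)}$ for a.e.\ $t \in (0, T)$. Combined with the previous display this yields, for a.e.\ $t \in (0, T)$,
\begin{equation*}
  \lVert C^1_a(t) \rVert_{\mathcal{L}(L_p(D))} \le \lVert c_1 \rVert_{L_\infty((0,T) \times D)}.
\end{equation*}

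Uniformity in $a \in Y$ then comes from property~\eqref{Y1}: $Y$ is a norm\nobreakdash-bounded subset of $L_\infty((0,T) \times D, \RR^{N^2+2N+2}) \oplus L_\infty((0,T) \times \partial D, \RR)$, so there exists a constant $M_Y$ such that $\lVert c_1 \rVert_{L_\infty((0,T) \times D)} \le M_Y$ for every $a \in Y$. Hence $\lVert C^1_a(t) \rVert_{\mathcal{L}(L_p(D))} \le M_Y$ for a.e.\ $t$ and all $a \in Y$, which is the claim.

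I do not expect a serious obstacle here; this is a book\nobreakdash-keeping lemma whose only subtle point is to cite the right measure\nobreakdash-theoretic fact to pass from the joint essential bound on $(0,T) \times D$ to a uniform bound on the $t$\nobreakdash-sections, and this is already handled by the remark after~\eqref{a: boundedness}. The one thing to be careful about is that the a.e.\ exceptional set in $t$ is genuinely independent of $v$ and of $a$ (for fixed $a$ it comes from Fubini applied to a single function, and over $a \in Y$ we only use a norm bound, not a pointwise bound, so no set\nobreakdash-theoretic union over $a$ is needed).
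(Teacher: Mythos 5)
Your proposal is correct and follows essentially the same route as the paper's proof: H\"older's inequality to bound the operator norm by $\lVert c_1(t,\cdot)\rVert_{L_\infty(D)}$, the Fubini remark after \eqref{a: boundedness} to control the $t$\nobreakdash-sections, and the norm bound \eqref{Y1} on $Y$ for uniformity in $a$. Your parenthetical about the exceptional sets is also consistent with the paper, which explicitly notes that these sets may differ with $a$ while the bounding constant does not.
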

\noindent It should be remarked that the exceptional sets can be different for different $a \in Y$.
\begin{proof}
Let $K$ be the norm bound of $Y$ (see assumption~\ref{Y1}). For any $v\in L_p(D)$ by the H\"older inequality we get
\begin{align*}
    \norm{C_{a}^{1}(t)v}_{L_p(D)} & =\norm{c_{1}(t,\cdot)v}_{L_p(D)} \\
    & \le \norm{c_1(t,\cdot)}_{L_{\infty}(D)}\norm{v}_{L_{p}(D)} \\
    & \le K\norm{v}_{L_{p}(D)}
\end{align*}
where above inequality holds for a.e.\ $0< t< T$, so the operator norm of $C^{1}_{a}(t)$ is bounded a.e.\ by $\norm{c_1(t,\cdot)}_{L_{\infty}(D)}$ what can be bounded uniformly with respect to a.e.\ $0 < t< T$ by virtue of assumption~\eqref{a: boundedness}. Since $Y$ is bounded by $K$ we also have uniform boundedness in $a \in Y$. \qed
\end{proof}

Below we present a series of lemmas to prove the measurability of individual parts of the mild solution. We will make frequent use of the Lemma~\ref{lm:Dunford-Schwartz} in this part of the work, in particular, Remark~\ref{remark:measurable_complement} on measurability will also be useful.

\begin{lemma}
 For any $1 < p < \infty$ and any norm bounded set $E \subset C([-1,T],L_p(D))$ the set
\begin{equation}
\label{measurable_step_1}
    \big\{ \, [\,(0,T) \ni t\mapsto (u \circ \Phi)(t)\in L_p(D) \,] : u \in E, \Phi \in \widetilde{\mathcal{R}} \, \big\}
\end{equation}
is bounded in $L_{\infty}((0,T),L_p(D))$.
\end{lemma}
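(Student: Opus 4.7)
The plan is straightforward: separately establish a uniform pointwise $L_p$-norm bound and the measurability of the composition $u \circ \Phi$, then combine them. Since $E$ is norm bounded in $C([-1,T], L_p(D))$, first fix $M > 0$ with $\sup_{s \in [-1,T]} \norm{u(s)}_{L_p(D)} \le M$ for every $u \in E$.

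Next, given $R \in \mathcal{R}$, the map $\Phi(t) \vcentcolon= t - R(t)$ is $(\mathfrak{L}((0,T)), \mathfrak{B}(\RR))$-measurable as a difference of measurable functions, and the constraint $R(t) \in [0,1]$ a.e.\ forces $\Phi(t) \in [-1, T]$ for a.e.\ $t \in (0,T)$. Since $u \in C([-1,T], L_p(D))$ is continuous, it is Borel-to-Borel measurable, hence the composition $u \circ \Phi \colon (0,T) \to L_p(D)$ is $(\mathfrak{L}((0,T)), \mathfrak{B}(L_p(D)))$-measurable; by Theorem~\ref{thm:equiv-measurable} this is equivalent to strong measurability, which is precisely what is needed for $u \circ \Phi$ to be a legitimate element of a Bochner space over $(0,T)$.

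The essential sup estimate is then immediate: for a.e.\ $t \in (0,T)$ one has $\Phi(t) \in [-1,T]$, and consequently $\norm{(u \circ \Phi)(t)}_{L_p(D)} = \norm{u(\Phi(t))}_{L_p(D)} \le \sup_{s \in [-1,T]} \norm{u(s)}_{L_p(D)} \le M$, which yields $\norm{u \circ \Phi}_{L_{\infty}((0,T), L_p(D))} \le M$ uniformly in $u \in E$ and $\Phi \in \widetilde{\mathcal{R}}$. The only mildly delicate point is the Banach-space-valued measurability of the composition; this however reduces to the standard fact that the composition of a Borel-measurable map with a Lebesgue-measurable one is Lebesgue-measurable, invoked through Theorem~\ref{thm:equiv-measurable}. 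The restriction $1 < p < \infty$ plays no role in the argument beyond matching the statement of the lemma.
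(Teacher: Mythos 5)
Your proof is correct and follows essentially the same route as the paper's: establish $(\mathfrak{L}((0,T)),\mathfrak{B}(L_p(D)))$-measurability of $u\circ\Phi$ from the continuity of $u$ and the Lebesgue measurability of $\Phi$, then bound the essential supremum over $(0,T)$ by the $C([-1,T],L_p(D))$-norm of $u$, uniformly over $E$ and $\widetilde{\mathcal{R}}$. The only cosmetic difference is that the paper verifies measurability directly on preimages of open sets rather than citing the general Borel-after-Lebesgue-measurable composition fact.
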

\begin{proof}
Let $u\in E$ and $\Phi\in \widetilde{\mathcal{R}}$. The mapping $u \, \circ \, \Phi$ is  $(\mathfrak{L}((0,T)), \mathfrak{B}(L_p(D)))$\nobreakdash-\hspace{0pt}measurable, since for any fixed open set $V\subset L_p(D)$ the preimage $u^{-1}[V]$ is open, hence $(u\circ \Phi)^{-1}[V]\in \mathfrak{L}((0,T))$. Moreover, the $L_{\infty}((0,T), L_p(D))$\nobreakdash-\hspace{0pt}norm of the map $u\circ \Phi$ is uniformly bounded with respect to $u$ and $\Phi$ by the same constant as $E$ due to the inequality
\begin{align*}
    \norm{u\circ \Phi}_{L_{\infty}((0,T),L_p(D))} &= \esssup_{t\in (0,T)} {\norm{(u\circ \Phi)(t)}_{L_p(D)}}
    \\ & \le \sup_{t\in [-1,T]} \norm{u(t)}_{L_p(D)}  =  \norm{u}_{C([-1,T],L_p(D))}.
\end{align*}   \qed
\end{proof}

\begin{lemma}
\label{lemma:v_m-boundednes}
 For any $1 < p < \infty$ and any norm bounded set $\tilde{E} \subset L_{\infty}((0,T),L_p(D))$ the set
\begin{equation*}
    \big\{ \, [\,(0,T) \ni t\mapsto C^1_{a}(t) \tilde{u}(t)\in L_p(D) \,] : a\in Y, \tilde{u}\in \tilde{E}\, \big\}
\end{equation*}
is bounded in $L_{\infty}((0,T),L_p(D))$.
\end{lemma}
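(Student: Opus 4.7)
The plan is to verify two things: first, that each map $t\mapsto C^1_a(t)\tilde u(t)$ genuinely belongs to $L_{\infty}((0,T), L_p(D))$ (that is, it is measurable and essentially bounded), and second, that the essential supremum admits a bound uniform in $a\in Y$ and $\tilde u \in \tilde E$.

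For the uniform bound, I would invoke Lemma~\ref{Boundednes of Multiplication Operator}: there exists a constant $K$ (the norm bound of $Y$ from \eqref{Y1}) such that for each $a\in Y$ one has $\norm{C^1_a(t)}_{\mathcal{L}(L_p(D))} \le K$ for a.e.\ $t\in(0,T)$. Let $M_{\tilde E}$ denote a bound for $\norm{\tilde u}_{L_{\infty}((0,T),L_p(D))}$ valid on all of $\tilde E$. Then for each fixed $a\in Y$ and $\tilde u\in\tilde E$, outside a common null set we have
\begin{equation*}
    \norm{C^1_a(t)\tilde u(t)}_{L_p(D)} \le \norm{C^1_a(t)}_{\mathcal{L}(L_p(D))}\,\norm{\tilde u(t)}_{L_p(D)} \le K\, M_{\tilde E},
\end{equation*}
so the essential supremum is at most $K M_{\tilde E}$, uniformly in $a$ and $\tilde u$.

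The somewhat subtler step is the measurability of $t\mapsto C^1_a(t)\tilde u(t)$ as an $L_p(D)$-valued function, which must be established before the essential supremum is meaningful in the sense of $L_{\infty}((0,T),L_p(D))$. Here I would use Lemma~\ref{lm:Dunford-Schwartz}. By Lemma~\ref{DS_a}, since $\tilde u \in L_{\infty}((0,T),L_p(D)) \subset L_1((0,T),L_1(D))$ (after multiplying by the finite measure of $(0,T)\times D$ if needed; or directly, $\tilde u \in L_1((0,T),L_p(D)) \subset L_1((0,T),L_1(D))$), the function $w(t,x) \vcentcolon= \tilde u(t)[x]$ is $(\mathfrak{L}((0,T)\times D),\mathfrak{B}(\RR))$-measurable. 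Since $c_1\in L_{\infty}((0,T)\times D)$ by \eqref{a: boundedness}, the pointwise product $(t,x)\mapsto c_1(t,x)\,w(t,x)$ is also $(\mathfrak{L}((0,T)\times D),\mathfrak{B}(\RR))$-measurable. For a.e.\ $t$ its $t$-section equals $c_1(t,\cdot)\tilde u(t)[\cdot] = (C^1_a(t)\tilde u(t))[\cdot]$, which lies in $L_p(D)$ by Lemma~\ref{Boundednes of Multiplication Operator}. Lemma~\ref{DS_b} then yields that $[(0,T)\ni t\mapsto C^1_a(t)\tilde u(t)\in L_p(D)]$ is $(\mathfrak{L}((0,T)),\mathfrak{B}(L_p(D)))$-measurable.

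Combining measurability with the a.e.\ bound $\norm{C^1_a(t)\tilde u(t)}_{L_p(D)}\le K M_{\tilde E}$ gives membership in $L_{\infty}((0,T),L_p(D))$ and the required uniform bound. The only real obstacle is the measurability bookkeeping -- in particular, checking that $w(t,x) \vcentcolon= \tilde u(t)[x]$ really is jointly measurable, which is the content of Lemma~\ref{DS_a} (applied after noting that any $\tilde u \in L_{\infty}((0,T),L_p(D))$ belongs to $L_1((0,T),L_1(D))$ since $(0,T)$ and $D$ both have finite Lebesgue measure and $p\ge 1$). Everything else is a direct application of results already in the paper.
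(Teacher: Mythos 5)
Your proposal is correct and follows essentially the same route as the paper's own proof: joint measurability of $(t,x)\mapsto\tilde u(t)[x]$ via Lemma~\ref{DS_a}, measurability of the product with $c_1$, return to an $L_p(D)$-valued measurable map via Lemma~\ref{DS_b}, and the uniform bound $K\,M_{\tilde E}$ from the norm bound on $Y$. Your explicit remark that $L_\infty((0,T),L_p(D))\subset L_1((0,T),L_1(D))$ (needed to invoke Lemma~\ref{DS_a}) is a detail the paper leaves implicit, but it is correct and changes nothing in the argument.
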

\begin{proof}
Let $\tilde{u}\in \tilde{E}$ and $a\in Y$. From Lemma~\ref{DS_a} follows that the mapping
\begin{equation}
\label{measurable_step_2}
    \big[ \, (0,T)\times D \ni (t,x)\mapsto \tilde{u}(t)[x]\in \RR \,\big]
\end{equation}
is $(\mathfrak{L}((0, T))\otimes\mathfrak{L}(D), \mathfrak{B}(\RR))$\nobreakdash-\hspace{0pt}measurable. Hence the function
\begin{equation}
\label{measurable_step_3}
    \big[ \, (0,T)\times D \ni (t,x)\mapsto \big( C_a(t)\tilde{u}(t)\big) [x]\in \RR \,\big]
\end{equation}
for any $a\in Y$ is $(\mathfrak{L}((0, T))\otimes\mathfrak{L}(D), \mathfrak{B}(\RR))$\nobreakdash-\hspace{0pt}measurable, since it can be rewritten as the product of $(\mathfrak{L}((0, T))\otimes\mathfrak{L}(D), \mathfrak{B}(\RR))$\nobreakdash-\hspace{0pt}measurable functions, namely
\begin{equation}
    \big[ \, (0,T)\times D \ni (t,x)\mapsto  c_1(t,x) \, \tilde{u}(t) [x]\in \RR \,\big].
\end{equation}
It suffices now to notice that for a.e.\ $t \in (0,T)$ the $t$\nobreakdash-\hspace{0pt}section of \eqref{measurable_step_3} belongs (by the definition of the multiplication operator) to $L_p(D)$. So from Lemma~\ref{DS_b} it follows that the mapping
\begin{equation*}
    \big[ \, (0,T) \ni t \mapsto  C_a(t)\tilde{u}(t) \in L_p(D) \,\big]
\end{equation*}
is $(\mathfrak{L}((0, T)), \mathfrak{B}(L_p(D)))$\nobreakdash-\hspace{0pt}measurable. By the norm estimate,
\begin{align*}
      \norm{C^1_{a}(\cdot)\tilde{u}}_{L_{\infty}((0,T),L_p(D))}
      &= \esssup_{t\in(0,T)}{\norm{c_1(t,\cdot) \tilde{u}(t)}_{L_p(D)}} \\
      &\le K \norm{\tilde{u}}_{L_{\infty}((0,T), L_p(D))}
\end{align*}
we obtain the statement.  \qed
\end{proof}

\begin{lemma}
\label{lm:I_i}
    Assume $1 < p < \infty$, $a_0 \in Y_{0}$, and $u \in L_{\infty}((0, T), L_p(D))$. Then
    \begin{enumerate}[label=\textup{(\roman*\textup)}, ref=\ref{lm:I_i}\textup{(\roman*)}]
    \item\label{lm:I_i_(i)}
    for any $0 < t \le T$ the function
    \begin{equation}
    \label{eq:I_i}
    [\, \zeta \mapsto U_{a_0}(t, \zeta) u(\zeta), \text{ for a.e. } \zeta \in (0, t) \,]
    \end{equation}
    belongs to $L_{\infty}((0, t), L_{p}(D))$
    moreover, the linear operator assigning~\eqref{eq:I_i} to $u$ belongs to $\mathcal{L}(L_{\infty}((0,1), L_p(D)), L_{\infty}((0,1), L_p(D)))$, with the norm bounded uniformly in $a_0 \in Y_0$,
        \item\label{lm:I_i_(ii)}
        the mapping
        \begin{equation}
        \label{eq:I_i-1}
            \biggl[\, [0, T] \ni t \mapsto \int_{0}^{t} U_{a_0}(t, \zeta) u(\zeta) \, \mathrm{d}\zeta \,\biggr]
        \end{equation}
        belongs to $C([0, T], L_p(D))$.
    \end{enumerate}
\end{lemma}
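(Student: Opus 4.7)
The approach rests on two facts established earlier: the uniform operator-norm bound \eqref{eq:L_p estymation+} from Proposition~\ref{prop:higher_orders-skew_product-p-q-estimates_2}, namely $\lVert U_{a_0}(t, \zeta) \rVert_{\mathcal{L}(L_p(D))} \le Me^{\gamma T}$ with $M, \gamma$ independent of $a_0 \in Y_0$, together with the joint norm-continuity of the evolution family on $\dot{\Delta}$ granted by Proposition~\ref{prop:continuity-in-norm-top}. The rest is essentially linearity, Bochner-integration bookkeeping, and dominated convergence.

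For part~\ref{lm:I_i_(i)}, fix $t \in (0, T]$. By Proposition~\ref{prop:continuity-in-norm-top} the map $\zeta \mapsto U_{a_0}(t, \zeta)$ is norm-continuous from $[0, t)$ into $\mathcal{L}(L_p(D))$. Since $u$ is strongly measurable by Theorem~\ref{thm:equiv-measurable}, pick simple approximations $u_n(\zeta) = \sum_i v_{n,i} \mathbbm{1}_{\! E_{n,i}}(\zeta)$ with $u_n(\zeta) \to u(\zeta)$ a.e.\ in $L_p(D)$. Each $\zeta \mapsto U_{a_0}(t, \zeta) v_{n,i}$ is continuous, so $\zeta \mapsto U_{a_0}(t, \zeta) u_n(\zeta)$ is strongly measurable; and the uniform estimate
\begin{equation*}
    \lVert U_{a_0}(t, \zeta) (u_n(\zeta) - u(\zeta)) \rVert_{L_p(D)} \le Me^{\gamma T} \lVert u_n(\zeta) - u(\zeta) \rVert_{L_p(D)}
\end{equation*}
transfers strong measurability to the pointwise limit $\zeta \mapsto U_{a_0}(t,\zeta) u(\zeta)$. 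The same bound, applied to $u$ itself, gives
\begin{equation*}
    \lVert U_{a_0}(t, \zeta) u(\zeta) \rVert_{L_p(D)} \le Me^{\gamma T} \lVert u \rVert_{L_\infty((0, T), L_p(D))}
\end{equation*}
for a.e.\ $\zeta \in (0, t)$, proving both $L_\infty$-membership and the claimed uniform-in-$a_0$ norm bound $Me^{\gamma T}$ on the associated linear operator.

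For part~\ref{lm:I_i_(ii)}, part~\ref{lm:I_i_(i)} guarantees that $I(t) \vcentcolon= \int_0^t U_{a_0}(t, \zeta) u(\zeta) \, \dd\zeta$ is a well-defined Bochner integral for every $t \in [0, T]$ (setting $I(0) = 0$). Continuity at $0$ is immediate from $\lVert I(t_n) \rVert_{L_p(D)} \le t_n Me^{\gamma T} \lVert u \rVert_{L_\infty((0,T), L_p(D))}$. For $t \in (0, T]$ and $t_n \to t$, assume first $t_n > t$ and decompose
\begin{equation*}
    I(t_n) - I(t) = \int_t^{t_n} U_{a_0}(t_n, \zeta) u(\zeta) \, \dd\zeta + \int_0^t \bigl[ U_{a_0}(t_n, \zeta) - U_{a_0}(t, \zeta) \bigr] u(\zeta) \, \dd\zeta.
\end{equation*}
The first summand has $L_p(D)$-norm at most $|t_n - t|\, Me^{\gamma T} \lVert u \rVert_{L_\infty((0,T), L_p(D))} \to 0$. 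In the second, Proposition~\ref{prop:continuity-in-norm-top} gives $\lVert U_{a_0}(t_n, \zeta) - U_{a_0}(t, \zeta) \rVert_{\mathcal{L}(L_p(D))} \to 0$ for every $\zeta \in [0, t)$, so the $L_p(D)$-norm of the integrand tends pointwise to $0$ with the constant majorant $2Me^{\gamma T} \lVert u \rVert_{L_\infty}$, and the Bochner dominated convergence theorem closes the argument. The case $t_n < t$ is handled symmetrically by swapping the roles of $t$ and $t_n$.

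The only mildly delicate step is the strong measurability claim in part~\ref{lm:I_i_(i)}; I expect it to be the main technical point, but it dissolves because the operator-norm continuity of $U_{a_0}$ (stronger than mere strong continuity) makes the simple-function approximation go through against an arbitrary measurable integrand. Everything else reduces to the uniform exponential bound and a clean split of the integration interval.
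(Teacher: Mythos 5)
Your proof is correct, but it reaches the measurability claim in part~(i) by a genuinely different route. The paper verifies \emph{weak} measurability and invokes Pettis's theorem (Theorem~\ref{thm:equiv-measurable}): it pairs $U_{a_0}(t,\zeta)u(\zeta)$ with a fixed $v \in L_{p'}(D)$, moves the operator onto $v$ via Proposition~\ref{prop:dual-Lp-q}, and uses the continuity of $\zeta \mapsto U^{*}_{a_0}(\zeta,t)v$ supplied by the adjoint analogue of Lemma~\ref{lm:continuity-in-strong-top}. You instead verify \emph{strong} measurability directly, approximating $u$ by simple functions and letting the continuity of $\zeta \mapsto U_{a_0}(t,\zeta)$ on $[0,t)$ carry measurability through to the a.e.\ limit; for this step strong-operator continuity (Lemma~\ref{lm:continuity-in-strong-top}) would already suffice, so your appeal to the norm continuity of Proposition~\ref{prop:continuity-in-norm-top} is stronger than needed. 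Your route avoids the adjoint family in part~(i) altogether, though the continuity results you cite are themselves proved via the adjoint, so the net logical dependency is comparable. In part~(ii) you use the same two-term splitting of $I(t_n)-I(t)$ as the paper, but where the paper carves out an extra $\eta$-sliver near $t_1$ and combines uniform norm-continuity of $U_{a_0}$ away from the diagonal with absolute continuity of the Bochner integral, you dispose of the whole term $\int_0^t \bigl[U_{a_0}(t_n,\zeta)-U_{a_0}(t,\zeta)\bigr]u(\zeta)\,\dd\zeta$ in one stroke by pointwise operator-norm convergence plus dominated convergence against the constant majorant $2Me^{\gamma T}\norm{u}_{L_\infty((0,T),L_p(D))}$; your handling of $t_n<t$ by restricting the integrand with an indicator is a routine adjustment. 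This is cleaner; what the paper's fussier argument buys is a modulus of continuity for $t\mapsto I(t)$ that is explicitly uniform in $a_0$ and in $u$ ranging over a bounded set, which is not needed for the lemma itself but is reused essentially verbatim to get equicontinuity in Lemma~\ref{lm:compact-aux-1}.
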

\begin{proof}
      Fix $0 < t \le T$.  We show first that \eqref{eq:I_i} defines a $(\mathfrak{L}((0, t)), \mathfrak{B}(L_{p}(D)))$\nobreakdash-\hspace{0pt}measurable function.  It is equivalent, by Theorem~\ref{thm:equiv-measurable}, to showing that for each $v \in L_{p'}(D)$ the function
        \begin{equation*}
            [\, \zeta \mapsto \langle U_{a_0}(t, \zeta) u(\zeta), v \rangle_{L_{p}(D), L_{p'}(D)}]
        \end{equation*}
    is $(\mathfrak{L}((0, t)), \mathfrak{B}(\RR))$\nobreakdash-\hspace{0pt}measurable.  By Proposition~\ref{prop:dual-Lp-q}, for Lebesgue\nobreakdash-\hspace{0pt}a.e.\ $\zeta \in [0, t)$ there holds
        \begin{equation*}
            \langle U_{a_0}(t, \zeta) u(\zeta), v \rangle_{L_{p}(D), L_{p'}(D)} = \langle u(\zeta),U_{a_0}^{*}(\zeta, t) v \rangle_{L_p(D), L_{p'}(D)}
        \end{equation*}
    It suffices now to notice that $u$ is $(\mathfrak{L}((0, t)), \mathfrak{B}(L_p(D)))$\nobreakdash-\hspace{0pt}measurable, by assumption, and that the function
        \begin{equation*}
            \bigl[\, [0, t) \ni \zeta \mapsto U_{a_0}^{*}(\zeta, t) v \in L_{p'}(D) \,\bigr]
        \end{equation*}
    is continuous, by the adjoint equation analog of Lemma~\ref{lm:continuity-in-strong-top}. It follows from Proposition~\ref{prop:higher_orders-skew_product-p-q-estimates_2} that the function
     \begin{equation*}
         \big[\, (0, t) \ni \zeta \mapsto \norm{U_{a_0}(t, \zeta)}_{\mathcal{L}(L_p(D))} \,\big]
     \end{equation*}
     belongs to $L_{\infty}((0, t))$.  Then the membership of~\eqref{eq:I_i} in $L_{\infty}((0, t), L_{p}(D))$ as well as the bound on its norm follow from the generalized H\"older inequality.  The proof of part (i) is thus completed.

    \smallskip
    We proceed to the proof of part (ii).  By part (i), the function~\eqref{eq:I_i-1} is well defined.  Let $0 \le t_1 \le t_2 \le T$.  We write
    \begin{multline*}
        \int_{0}^{t_2} U_{a_0}(t_2, {\zeta}) u({\zeta}) \, \mathrm{d}{\zeta} - \int_{0}^{t_1} U_{a_0}(t_1, {\zeta}) u({\zeta}) \, \mathrm{d}{\zeta}
        \\
        = \int_{0}^{t_1} \bigl( U_{a_0}(t_2, {\zeta}) - U{a_0}(t_1, {\zeta})\bigr) u({\zeta}) \, \mathrm{d}{\zeta} + \int_{t_1}^{t_2} \U_{a_0}(t_2, {\zeta}) u({\zeta}) \, \mathrm{d}{\zeta}.
    \end{multline*}
    Let $\epsilon > 0$.  As \eqref{eq:I_i} belongs to $L_{\infty}((0, t), L_{p}(D))$, it is a consequence of~\cite[Thm.~II.2.4(i)]{DiUhl} that the $L_p(D)$\nobreakdash-\hspace{0pt}norm of the second term on the right-hand side can be made $< \epsilon/3$ by taking $t_1, t_2$ sufficiently close to each other.  Regarding the first term, we write
    \begin{multline*}
        \int_{0}^{t_1} \bigl( \U_{a_0}(t_2, {\zeta}) - \U_{a_0}(t_1, {\zeta})\bigr) u({\zeta}) \, \mathrm{d}{\zeta}
        \\
        = \int_{0}^{t_1 - \eta} \bigl( \U_{a_0}(t_2, {\zeta}) - \U_{a_0}(t_1, {\zeta})\bigr) u({\zeta}) \, \mathrm{d}{\zeta} + \int_{t_1 - \eta}^{t_1} \bigl( \U_{a_0}(t_2, {\zeta}) - \U_{a_0}(t_1, {\zeta})\bigr) u({\zeta}) \, \mathrm{d}{\zeta}.
    \end{multline*}
    Again by \cite[Thm.~II.2.4(i)]{DiUhl}, for $\eta > 0$ sufficiently small there holds
    \begin{equation*}
            \biggl\lVert \int_{t_1 - \eta}^{t_1} \bigl( \U_{a_0}(t_2, {\zeta}) - \U_{a_0}(t_1, {\zeta})\bigr) u({\zeta}) \, \mathrm{d}{\zeta} \biggr\rVert_{L_p(D)} < \frac{\epsilon}{3}.
    \end{equation*}
    It follows from Proposition~\ref{prop:continuity-in-norm-top} that the assignment
    \begin{equation*}
        \bigl[\, \{\, \dot{\Delta} : \eta \le \zeta + \eta \le t \le T \,\} \ni (\zeta, t) \mapsto U_{a_0}(t, \zeta) \in \mathcal{L}(L_p(D)) \,\bigr]
   \end{equation*}
    is uniformly continuous, consequently there exists $\delta > 0$ such that if $\eta \le \zeta + \eta \le t_1 \le t_2$, $t_2 - t_1 < \delta$, then
    \begin{equation*}
        \lVert U_{a_0}(t_2, \zeta) - U_{a_0}(t_1, \zeta) \rVert_{\mathcal{L}(L_p(D))}  < \frac{\epsilon}{3 \norm{u}_{L_1((0, T), L_p(D))}}.
    \end{equation*}
   Therefore
    \begin{equation*}
        \biggl\lVert \int_{0}^{t_1 - \eta} \bigl( U_{a_0}(t_2, {\zeta}) -U_{a_0}(t_1, {\zeta})\bigr) u({\zeta}) \, \mathrm{d}{\zeta} \biggr\rVert_{L_p(D)}
         < \frac{\epsilon}{3}.
    \end{equation*}
    This concludes the proof of part (ii).  \qed
\end{proof}

\begin{definition}[Mild Solution]
For $1 \le p<\infty$, $a \in Y$, $0 \le s < T_0 \le T$ and $u_0 \in C([s-1,s], L_p(D))$ and $R \in \mathcal{R}$ the function $u\in C([s-1,T_0], L_p(D))$ such that
  \begin{equation}
  \label{mild_initial_condi}
    u(t) =  u_0(t) \text{ for } t \in [s- 1, s],
  \end{equation}
holds and the integral equation
\begin{equation}
\label{mild_equation_semip}
    u(t)=\U_{\tilde{a}}(t,s) u_0(s) +\int_{s}^{t} \U_{\tilde{a}} (t,\zeta) C^{1}_{a}(\zeta)u(\zeta-R(\zeta)) \,\dd\zeta
\end{equation}
is satisfied in $L_p(D)$ on $[s,T_0]$ will be called a \emph{mild solution} of \eqref{main-eq}$_a+$\eqref{main-bc}$_{a}$.

For $T_0 = T$ we have a \emph{global mild solution}.
\end{definition}

At first note that the concept of mild solution, especially part~\eqref{mild_equation_semip}, is well defined based on Lemma~\ref{lemma:v_m-boundednes} and Lemma~\ref{lm:I_i}.
At some moments we use the name ``mild solution'' to describe function $u\upharpoonright_{[0,T_0]}$ instead of $u\in C([-1,T_0],L_p(D))$ satisfying \eqref{mild_initial_condi} and \eqref{mild_equation_semip}. This convention seems more natural especially in the context of continuous dependence on coefficients. A similar convention can be found in the literature~\cite{Hale-Le}.

\subsection{Existence and Uniqueness of Global Mild Solutions}

\begin{proposition}
\label{pro:local_uniqueness}
There exists $\Theta_0 \in (0,T]$ such that for any $1<p<\infty$, $a \in Y $, $R\in \mathcal{R}$, $0\le s\le T-\Theta_0$, $u_0\in C([s-1,s],L_p(D))$ and any $0<\Theta\le \Theta_0$ there exist unique solution of \eqref{main-eq}$_{a}$\textup{+}%
\eqref{main-bc}$_{a}$ on $[s-1,s+\Theta]$ with initial condition $u_0$.
\end{proposition}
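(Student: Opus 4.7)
The plan is a standard Banach fixed-point argument applied to the integral equation~\eqref{mild_equation_semip}. Fix parameters $1<p<\infty$, $a\in Y$, $R\in\mathcal{R}$, $s\in[0,T]$, $u_0\in C([s-1,s],L_p(D))$, and, for $\Theta\in(0,T-s]$, introduce the complete metric space
\begin{equation*}
X_{\Theta} \vcentcolon= \{\, u \in C([s-1,s+\Theta], L_p(D)) : u|_{[s-1,s]} = u_0 \,\},
\end{equation*}
a closed affine subset of $C([s-1,s+\Theta],L_p(D))$ with the sup norm. Define $\mathcal{T}\colon X_{\Theta}\to C([s-1,s+\Theta],L_p(D))$ by
\begin{equation*}
    (\mathcal{T}u)(t) \vcentcolon= \begin{cases} u_0(t), & t \in [s-1,s], \\[1ex] U_{\tilde{a}}(t,s)u_0(s) + \displaystyle\int_s^t U_{\tilde{a}}(t,\zeta) C^1_{a}(\zeta) u(\zeta - R(\zeta)) \, \dd\zeta, & t \in [s, s+\Theta]. \end{cases}
\end{equation*}
Fixed points of $\mathcal{T}$ are exactly the local mild solutions on $[s-1,s+\Theta]$ with initial condition $u_0$.

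To see that $\mathcal{T}(X_{\Theta})\subset X_{\Theta}$, first note that $\zeta\mapsto u(\zeta-R(\zeta))$ is $(\mathfrak{L}((s,s+\Theta)),\mathfrak{B}(L_p(D)))$\nobreakdash-\hspace{0pt}measurable (composition of the continuous $u$ with the measurable map $\zeta\mapsto\zeta-R(\zeta)$) and is norm\nobreakdash-\hspace{0pt}bounded by $\norm{u}_{C([s-1,s+\Theta],L_p(D))}$, so it lies in $L_\infty((s,s+\Theta),L_p(D))$. Lemma~\ref{lemma:v_m-boundednes} places $C^1_a(\cdot)u(\cdot-R(\cdot))$ in the same space, and the shifted analogue of Lemma~\ref{lm:I_i_(ii)} makes the integral term continuous on $[s,s+\Theta]$; continuity of $t\mapsto U_{\tilde a}(t,s)u_0(s)$ is provided by Proposition~\ref{prop:higher_orders-p-q-continuity}. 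The matching at $t=s$ is automatic, since $U_{\tilde a}(s,s)=\mathrm{Id}_{L_p(D)}$ by~\eqref{eq:cocycle2-1}, so $(\mathcal{T}u)(s)=u_0(s)$.

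For the contraction step, let $u_1,u_2\in X_\Theta$ and put $w\vcentcolon=u_1-u_2$. Because $w$ vanishes on $[s-1,s]$, for every $\zeta\in[s,s+\Theta]$ one has $\norm{w(\zeta-R(\zeta))}_{L_p(D)}\le \norm{w}_{C([s-1,s+\Theta],L_p(D))}$, regardless of whether $\zeta-R(\zeta)$ falls below or above $s$. Combining the estimate~\eqref{eq:L_p estymation+} from Proposition~\ref{prop:higher_orders-skew_product-p-q-estimates} with the uniform bound $K$ from Lemma~\ref{Boundednes of Multiplication Operator} gives
\begin{equation*}
    \norm{(\mathcal{T}u_1)(t)-(\mathcal{T}u_2)(t)}_{L_p(D)}\le M K e^{\gamma\Theta}\Theta\,\norm{u_1-u_2}_{C([s-1,s+\Theta],L_p(D))}
\end{equation*}
for all $t\in[s,s+\Theta]$ (the left-hand side being zero on $[s-1,s]$). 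The essential observation is that $M$, $\gamma$, $K$ do not depend on $a$, $R$, $s$, $u_0$, or $p$. Choosing $\Theta_0\in(0,T]$ so small that $M K e^{\gamma\Theta_0}\Theta_0<1$, the operator $\mathcal{T}$ becomes a strict contraction on $X_\Theta$ for every $\Theta\in(0,\Theta_0]$ and every admissible $s\in[0,T-\Theta_0]$, so the Banach fixed-point theorem delivers a unique fixed point, i.e.\ a unique local mild solution. The only real subtlety is verifying the measurability-plus-boundedness prerequisites that allow Lemma~\ref{lm:I_i} to be applied to the integrand $\zeta\mapsto C^1_a(\zeta)u(\zeta-R(\zeta))$; the remainder is bookkeeping with the already-established operator bounds.
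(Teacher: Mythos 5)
Your proposal is correct and takes essentially the same route as the paper: a Banach fixed-point argument for the integral equation \eqref{mild_equation_semip}, with the delayed argument read off from $u_0$ when $\zeta-R(\zeta)<s$, the contraction constant $MKe^{\gamma\Theta}\Theta$ coming from Proposition~\ref{prop:higher_orders-skew_product-p-q-estimates} and Lemma~\ref{Boundednes of Multiplication Operator}, and $\Theta_0$ chosen uniformly in $a$, $R$, $s$, $u_0$ and $p$ (the paper takes $\Theta_0=1/(2MKe^{\gamma T})$). The only cosmetic difference is that you work on the affine subspace of $C([s-1,s+\Theta],L_p(D))$ rather than on $C([s,s+\Theta],L_p(D))$ with the extension convention, and you spell out the well-definedness checks slightly more explicitly.
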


\begin{proof}
The idea of the proof runs as~follows.  The solution is obtained as a fixed point of the contraction mapping $\mathfrak{G}$ of $C([s,s+\Theta], L_p(D))$ into itself (see \ref{prop:higher_orders-skew_product-p_ii}, \ref{lm:I_i_(ii)} and \ref{lemma:v_m-boundednes}) defined as
\begin{equation}
\label{eq:Gothic-G}
  \begin{aligned}
  (\mathfrak{G}u)[t] \vcentcolon= {} & \U_{\tilde{a}} (t,s)u_0(s) +\int_{s}^{t} \U_{\tilde{a}} (t,\zeta) C^{1}_{a}(\zeta)u(\zeta-R(\zeta)) \,\dd\zeta ,
  \end{aligned}
\end{equation}
where $s \le t \le s+\Theta$ and $\Theta \in (0, T]$ is sufficiently small. Until revoking, $u$ and $v$ stand for  generic functions in $C([s,s+\Theta], L_p(D))$. For such a $u$ we interpret $u(\zeta - R(\zeta))$ (similarly $v$) as $u_0(\zeta - R(\zeta))$ when $\zeta-R(\zeta) \in (-1, 0)$.
\begin{align*}
    \norm{(\mathfrak{G}u)[t] - (\mathfrak{G}v)[t] }_{L_p(D)} &\le \int_{s}^{t} \norm{\U_{\tilde{a}} (t,\zeta)C^{1}_{a}(\zeta)\big( u(\zeta-R(\zeta))-v(\zeta-R(\zeta)) \big) }_{L_p(D)}\, \dd\zeta  \\
    & \le MKe^{\gamma t}  \int_{s}^{t} \norm{ u(\zeta-R(\zeta))-v(\zeta-R(\zeta))  }_{L_p(D)}\, \dd\zeta  \\
    &\le  MKe^{\gamma t} \int_{s}^{t} \sup_{s\le \xi \le t} \norm{ u(\xi-R(\xi))-v(\xi-R(\xi))  }_{L_p(D)} \, \dd\zeta  \\
    &\le  MKe^{\gamma t} \int_{s}^{t} \sup_{s-1\le \xi \le t} \norm{ u(\xi)-v(\xi)  }_{L_p(D)} \, \dd\zeta  \\
    &= MKe^{\gamma t} \int_{s}^{t} \sup_{s\le \xi\le t} \norm{ u(\xi)-v(\xi)  }_{L_p(D)} \, \dd\zeta  \\
    &\le MKe^{\gamma T}\Theta \norm{u-v}_{C([s,s+\Theta],L_p(D))}.
\end{align*}
By taking $\displaystyle 0< \Theta \le \Theta_0 \coloneqq 1/(2MK e^{\gamma T})$ we obtain that the contraction coefficient is less than $1$. \qed
\end{proof}
The Contraction Mapping Principle guarantees the existence and uniqueness of the fixed point $u$ of  $\mathfrak{G}$, which is then the unique mild $L_p$\nobreakdash-\hspace{0pt}solution of \eqref{main-eq}$_{a}$\textup{+}%
\eqref{main-bc}$_{a}$ on $[s-1, \Theta_0]$ satisfying the initial condition \eqref{main-ic}.

\begin{lemma}
\label{lm:gluing}
For any $1<p<\infty$, $0<s_1<s_2\le T$, $a\in Y$, $R\in\mathcal{R}$, $u_0\in C([-1,0],L_p(D))$ and $v \colon [-1,s_2]\to L_p(D)$ the following statements are equivalent:
\begin{enumerate}[label=\textup{(\roman*\textup)}, ref=\textup{(\roman*)}]
\item\label{glue} a function $v$ is the mild solution of \eqref{main-eq}$_{a}$\textup{+}%
\eqref{main-bc}$_{a}$ on $[-1,s_2]$ with initial condition $u_0$,
    \item\label{unglue} a function $v\!\!\restriction_{[-1,s_1]}$ is the mild solution of \eqref{main-eq}$_{a}$\textup{+}%
\eqref{main-bc}$_{a}$ with initial condition $u_0$ and $v\!\!\restriction_{[s_1-1,s_2]}$ is the mild solution of \eqref{main-eq}$_{a}$\textup{+}%
\eqref{main-bc}$_{a}$ with initial condition $v\!\!\restriction_{[s_1-1,s_1]}$.
\end{enumerate}

\end{lemma}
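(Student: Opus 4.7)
The plan is to verify both implications by splitting the Bochner integral at $s_1$ and invoking the cocycle identity \eqref{eq:cocycle2-2_p} together with the fact that bounded linear operators commute with Bochner integration. Throughout, the only nontrivial range is $t \in [s_1, s_2]$, because for $t \in [-1, s_1]$ both characterizations agree by construction.

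For the implication \ref{glue}$\Rightarrow$\ref{unglue}, the first assertion (that $v\!\restriction_{[-1, s_1]}$ solves the mild equation on $[-1, s_1]$) is immediate since the defining identity \eqref{mild_equation_semip} with $s = 0$ simply restricts to $t \in [0, s_1]$. For the second assertion, I would start from \eqref{mild_equation_semip} applied at $t \in [s_1, s_2]$, split the integral as $\int_0^t = \int_0^{s_1} + \int_{s_1}^t$, and then rewrite
\begin{equation*}
   U_{\tilde{a}}(t, 0) u_0(0) + \int_0^{s_1} U_{\tilde{a}}(t, \zeta) C^1_a(\zeta) v(\zeta - R(\zeta)) \, \dd\zeta
   = U_{\tilde{a}}(t, s_1)\!\left[ U_{\tilde{a}}(s_1, 0) u_0(0) + \int_0^{s_1} U_{\tilde{a}}(s_1, \zeta) C^1_a(\zeta) v(\zeta - R(\zeta)) \, \dd\zeta \right],
\end{equation*}
where the cocycle property from Proposition~\ref{Extension_identyty_cocycle} lets one pull $U_{\tilde{a}}(t, s_1)$ out of the first summand, and the commutation of the bounded operator $U_{\tilde{a}}(t, s_1) \in \mathcal{L}(L_p(D))$ with the Bochner integral (standard, e.g.\ \cite[Thm.~II.2.6]{DiUhl}) does the same for the integral. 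The bracketed expression is precisely $v(s_1)$ by \eqref{mild_equation_semip} for $v$ at $t = s_1$, yielding the mild equation on $[s_1, s_2]$ with initial datum $v\!\restriction_{[s_1 - 1, s_1]}$.

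For the reverse implication \ref{unglue}$\Rightarrow$\ref{glue}, the argument simply reverses the algebra. For $t \in [0, s_1]$ the mild identity for $v$ holds by hypothesis. For $t \in [s_1, s_2]$, substitute the expression for $v(s_1)$ (given by \ref{unglue} at $t = s_1$) into the mild identity for $v\!\restriction_{[s_1 - 1, s_2]}$ at time $t$, then use the cocycle identity of Proposition~\ref{Extension_identyty_cocycle} to bring $U_{\tilde{a}}(t, s_1)$ inside the integral $\int_0^{s_1}$, obtaining $U_{\tilde{a}}(t, \zeta)$, and combine with the $\int_{s_1}^t$ integral to recover $\int_0^t$.

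The main (minor) obstacle is bookkeeping the integrand $v(\zeta - R(\zeta))$: one must make sure that the value of $v(\xi)$ for $\xi \in [s_1 - 1, s_1]$ used in the ``second'' mild equation agrees with the corresponding value in the original equation. Since the second equation in \ref{unglue} uses $v\!\restriction_{[s_1-1, s_1]}$ as its initial condition and $R(\zeta) \in [0, 1]$ guarantees $\zeta - R(\zeta) \in [s_1 - 1, \zeta] \subset [s_1 - 1, s_2]$ for $\zeta \in [s_1, s_2]$, the argument $v(\zeta - R(\zeta))$ is interpreted identically in both formulations, so no inconsistency arises. The measurability and integrability of the integrands on the subintervals follow from Lemmas~\ref{lemma:v_m-boundednes} and~\ref{lm:I_i_(i)} exactly as in the existence proof, and hence the manipulations above are legitimate.
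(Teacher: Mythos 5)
Your proposal is correct and follows essentially the same route as the paper's proof: split the integral at $s_1$, pull $U_{\tilde{a}}(t,s_1)$ out of the first piece via the cocycle identity and the commutation of bounded operators with the Bochner integral, recognize the bracketed term as $v(s_1)$, and reverse the computation for the converse. Your extra remark that $\zeta - R(\zeta) \in [s_1-1,\zeta]$ for $\zeta \in [s_1,s_2]$, so the two formulations evaluate $v$ at the same points, is a detail the paper leaves implicit but is worth making explicit.
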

\begin{proof}
    Let $p,s_1,s_2,a,R,u_0$ and $v$ be as in the statement. To prove $\ref{glue} \Rightarrow\ref{unglue}$ it suffices to see that for any  $s_1 \le t\le s_2$, in view of \eqref{eq:cocycle2-2} and \cite[Lemma~11.45]{AliB} there holds
    \begin{equation}
    \label{eq:gluing}
        \begin{aligned}
         v(t) = {} & U_{\tilde{a}}(t, 0) u_0(0) + \int\limits_{0}^{t} U_{\tilde{a}}(t, \zeta) C^1_{a}(\zeta) v(\zeta-R(\zeta)) \, \mathrm{d}\zeta \\
        ={} & U_{\tilde{a}}(t, s_1)\Bigg(  U_{\tilde{a}}(s_1, 0)  u_0(0) + \int\limits_{0}^{s_1} U_{\tilde{a}}(s_1, \zeta) C^1_{a}(\zeta) v(\zeta-R(\zeta)) \, \mathrm{d}\zeta \Bigg) \\
         {} & + \int\limits_{s_1}^{t} U_{\tilde{a}}(t, \zeta) C^1_{a}(\zeta) v(\zeta-R(\zeta)) \, \mathrm{d}\zeta\\
        = {}  & U_{\tilde{a}}(t, s_1)v{\restriction_{[s_1-1,s_1]}}(s_1)+ \int\limits_{s_1}^{t} U_{\tilde{a}}(t, \zeta) C^1_{a}(\zeta) v{\restriction_{[s_1-1,s_2]}}(\zeta-R(\zeta)) \, \mathrm{d}\zeta.
        \end{aligned}
    \end{equation}
    In order to prove $\ref{unglue}\Rightarrow \ref{glue}$ fix $t\in [-1,s_2]$ and consider the cases: $t\in[-1,0]$, $t\in[0,s_1]$ or $t\in[s_1,s_2]$. Since the first two cases are straightforward and the third is a similar calculation to~\eqref{eq:gluing}, the proof is finished.  \qed
\end{proof}

\begin{theorem}
\label{thm:mild_solution_existence}
For any $1 < p < \infty$, $a \in Y$, $u_0 \in C([-1,0],L_p(D))$ and $R \in \mathcal{R}$ equation \eqref{main-eq}$_{a}$\textup{+}%
\eqref{main-bc}$_{a}$ has a unique global mild solution on $[0,T]$.
\end{theorem}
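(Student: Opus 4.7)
The plan is to iterate the local existence result Proposition~\ref{pro:local_uniqueness} and paste the pieces together via the gluing Lemma~\ref{lm:gluing}. The crucial feature making this possible is that the time-step $\Theta_0 = 1/(2MK e^{\gamma T})$ in Proposition~\ref{pro:local_uniqueness} depends only on the a priori constants $M$, $K$, $\gamma$ and $T$, and neither on the starting time $s$ nor on the initial data. Hence each application of the proposition advances the solution by a fixed amount, so $n := \lceil T/\Theta_0 \rceil$ applications suffice to reach time $T$.

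For existence I would set $t_k := \min\{k\Theta_0, T\}$ for $k = 0, 1, \dots, n$, and construct inductively a mild solution $v_k \in C([-1, t_k], L_p(D))$ of $\eqref{main-eq}_a + \eqref{main-bc}_a$ with initial condition $u_0$. The base case $k=1$ uses Proposition~\ref{pro:local_uniqueness} with $s = 0$ and $\Theta = t_1$. For the inductive step, given $v_k$, I apply Proposition~\ref{pro:local_uniqueness} with $s = t_k$, $\Theta = t_{k+1} - t_k$, and initial data $v_k\!\!\restriction_{[t_k - 1, t_k]} \in C([t_k-1,t_k], L_p(D))$ to obtain $w_k \in C([t_k - 1, t_{k+1}], L_p(D))$. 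Pasting $v_k$ and $w_k$ (which agree on $[t_k-1, t_k]$ by the choice of initial data for $w_k$) defines $v_{k+1}$ on $[-1, t_{k+1}]$, and Lemma~\ref{lm:gluing} in the form $\ref{unglue}\Rightarrow\ref{glue}$ certifies that $v_{k+1}$ is a mild solution on $[-1, t_{k+1}]$. After $n$ iterations, $v_n$ is the desired global mild solution.

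For uniqueness I would take two global mild solutions $v, \tilde v$ with the same initial condition and prove by induction on $k$ that they agree on $[-1, t_k]$. The induction step uses Lemma~\ref{lm:gluing} in the form $\ref{glue}\Rightarrow\ref{unglue}$ to view both $v\!\!\restriction_{[t_k - 1, t_{k+1}]}$ and $\tilde v\!\!\restriction_{[t_k - 1, t_{k+1}]}$ as mild solutions of the local problem starting at $s = t_k$ with common initial data $v\!\!\restriction_{[t_k - 1, t_k]} = \tilde v\!\!\restriction_{[t_k - 1, t_k]}$; they coincide by the uniqueness part of Proposition~\ref{pro:local_uniqueness}, which completes the induction.

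The only slightly delicate point will be the final step, in which the remaining interval $[t_{n-1}, T]$ may have length strictly less than $\Theta_0$, so that the literal hypothesis $s \le T - \Theta_0$ of Proposition~\ref{pro:local_uniqueness} is not met at $s = t_{n-1}$. However, inspection of the contraction estimate in the proof of that proposition shows that the contraction constant $M K e^{\gamma T}\Theta$ is already $<1$ for every $\Theta \in (0, \Theta_0]$, so the effective hypothesis is simply $\Theta \le \Theta_0$ together with $s + \Theta \le T$; accordingly the last step poses no real obstacle, and the iteration terminates after exactly $n$ applications as claimed.
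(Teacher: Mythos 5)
Your proposal is correct and rests on exactly the same ingredients as the paper's own proof --- the uniform local step $\Theta_0$ from Proposition~\ref{pro:local_uniqueness} and the gluing Lemma~\ref{lm:gluing} --- differing only in bookkeeping: you run an explicit finite induction in $\lceil T/\Theta_0\rceil$ steps, whereas the paper argues by contradiction on the supremum of the set of times up to which a unique mild solution exists. Your explicit treatment of the final short step (where the literal hypothesis $s \le T - \Theta_0$ fails but the contraction constant $MKe^{\gamma T}\Theta < 1$ still holds for any $\Theta \le \Theta_0$ with $s+\Theta \le T$) addresses a point the paper's proof passes over silently when it writes $\min\{s+\Theta_0, T\}$, so that remark is a strength rather than a gap.
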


\begin{proof}
    Fix $a$, $u_0$ and $R$ as in the statement. Let
    \begin{equation*}
        Q=\{q\in [0,T]: \text{\eqref{main-eq}$_{a}$\textup{+}%
\eqref{main-bc}$_{a}$ has a unique mild solution on }[-1,q] \}.
    \end{equation*}
    It suffices to prove that $T\in Q$. Suppose to the contrary that $T\not\in Q$. Since $\Theta_0 \in Q$ (where $\Theta_0$ stands for constant obtained in Proposition~\ref{pro:local_uniqueness}) and $Q \subset [0,T]$, $\sup Q < \infty$.  It is straightforward that $0 \le \sup Q-\Theta_0/2<\sup Q\le T$ hence there exists $s \in Q$ such that $s > \sup Q - \Theta_0/2$.

    Let $v_1 \colon [-1,s] \to L_p(D)$ be the unique mild solution with initial condition $u_0$. From the definition of $\Theta$ it follows that there is a mild solution $v_2 \colon [s-1,\min\{s+\Theta_0,T\}] \to L_p(D)$ with initial condition $v_1\!\!\restriction_{[s-1,s]}$. Let
    \begin{equation*}
        v(t) =
      \begin{cases}
       v_1(t) & \text{for } t \in [-1, s]
        \\
       v_2(t) & \text{for } t \in [s-1, \min\{s+\Theta_0,T\} ].
      \end{cases}
    \end{equation*}
 We claim that $v$ is a unique mild solution of \eqref{main-eq}$_{a}$\textup{+}%
\eqref{main-bc}$_{a}$ on $[-1,\min\{s+\Theta_0,T\}]$ with initial condition $u_0$.  From Lemma~\ref{lm:gluing} it follows that $v$ is in fact a mild solution.  For uniqueness, assume $w \colon [-1, \min \{ s+\Theta_0, T \}]\to L_p(D)$ is any mild solution. Then clearly $w\!\!\restriction_{[-1, s]} = v_1$.  Moreover, by Lemma~\ref{lm:gluing}, the function $w\!\!\restriction_{[s-1, \min \{ s+\Theta_0, T \}]}$ is a mild solution with initial condition $w\!\!\restriction_{[s-1, s]} = v_1\!\!\restriction_{[s-1, s]}$, so by the uniqueness of $v_2$ we have that $w\!\!\restriction_{[s-1, \min \{ s+\Theta_0, T \}]} = v_2$. Hence $v = w$.  The proof is completed by the following observation: if $\min\{s+\Theta_0,T\}=s+\Theta_0$ then $s+\Theta_0 \in Q$, so we get a contradiction with the fact that $s+\Theta_0 > \sup Q$: otherwise $T\in Q$, which contradicts the assumption.  \qed
\end{proof}

The above result allows us to define a mild solution of \eqref{main-eq}$_{a}$\textup{+}%
\eqref{main-bc}$_{a}$ on the whole of $[-1, T]$ or $[s-1,T]$ if necessary.

For $s = 0$, to stress the dependence of the solution on $a$, $u_0$, $R$ we write $u(\cdot; a, u_0,R)$. For $t \in [-1, 0]$, $u(t; a, u_0, R)$ is interpreted as $u_0(t)$. Moreover, when it does not lead to confusion, we sometimes write $u(t;a, u_0, \Phi)$ instead of $u(t; a, u_0, R)$.

\subsection{Compactness of Solution Operator}
\label{subsect:compactness}

\begin{lemma}
\label{lm:compact-aux}
    Assume $1 < p< \infty$ and $0 < T_1 \le T$.
    Then for any bounded $F \subset L_{\infty}((0, T), L_p(D))$ the set
    \begin{equation*}
        \widehat{F} \vcentcolon= \biggl\{ \int\limits_{0}^{t} U_{a_0}(t, {\zeta}) u({\zeta}) \, \mathrm{d}{\zeta} : a_0 \in Y_{0}, \ u \in F, \ t \in [T_1, T] \biggr\}
    \end{equation*}
    is precompact in $L_p(D)$.
\end{lemma}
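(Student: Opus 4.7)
The plan is to split the integral into a main part, which I will show to lie in a precompact subset of $L_p(D)$, and a short-time tail, which can be made uniformly small; the conclusion then follows by a total-boundedness argument.

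Given $\epsilon > 0$, I would fix $\eta \in (0, T_1)$ and write, for any $(a_0, u, t) \in Y_0 \times F \times [T_1, T]$,
\begin{equation*}
\int_{0}^{t} U_{a_0}(t, \zeta) u(\zeta) \, \mathrm{d}\zeta = U_{a_0}(t, t-\eta)\, w(a_0, u, t) + \int_{t-\eta}^{t} U_{a_0}(t, \zeta) u(\zeta) \, \mathrm{d}\zeta,
\end{equation*}
where $w(a_0, u, t) \vcentcolon= \int_{0}^{t-\eta} U_{a_0}(t-\eta, \zeta) u(\zeta) \, \mathrm{d}\zeta$; this uses the cocycle identity of Proposition~\ref{Extension_identyty_cocycle}. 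By Lemma~\ref{lm:I_i_(i)} (applied on the interval $(0, t-\eta)$), the vectors $w(a_0, u, t)$ lie in a bounded set $B \subset L_p(D)$ uniformly in $(a_0, u, t)$. The tail is controlled via the $L_p$-estimate of Proposition~\ref{prop:higher_orders-skew_product-p-q-estimates_2} with $p=q$:
\begin{equation*}
\Bigl\| \int_{t-\eta}^{t} U_{a_0}(t, \zeta) u(\zeta) \, \mathrm{d}\zeta \Bigr\|_{L_p(D)} \le M e^{\gamma T} \sup_{u \in F} \norm{u}_{L_{\infty}((0,T), L_p(D))} \, \eta,
\end{equation*}
which can be made less than $\epsilon/2$ by choosing $\eta$ small.

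The crux is to show that the set $\{\, U_{a_0}(t, t-\eta) v : a_0 \in Y_0,\ t \in [T_1, T],\ v \in B \,\}$ is precompact in $L_p(D)$. For this I would replay the scheme of the proof of Proposition~\ref{prop:higher_order-compactness}: use a uniform-in-starting-time version of Proposition~\ref{prop:local-regularity-higher_order} to obtain, for each compact $D_0 \subset D$, a uniform Hölder bound $\norm{(U_{a_0}(t, t-\eta) v) \restriction_{D_0}}_{C^{\alpha}(D_0)} \le C(\eta, D_0)$ over $(a_0, t, v) \in Y_0 \times [T_1, T] \times B$, then apply Arzelà--Ascoli together with an $L_\infty$-cutoff on $D \setminus D_0$ provided by Proposition~\ref{prop:higher_orders-skew_product-p-q-estimates_2}. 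The uniformity in the starting time $s = t - \eta$ is the main obstacle: it is not literally the statement of Proposition~\ref{prop:local-regularity-higher_order}, but it follows from the parabolic regularity theorem~\cite[Chpt.~III, Thm.~10.1]{LaSoUr}, whose Hölder constants depend only on the ellipticity constant $\alpha_0$, the uniform $L_\infty$-bounds on the coefficients of elements of $Y_0$, and the gap $\eta$ between the starting and evaluation times.

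Combining the two ingredients, the main part lies in a precompact, hence totally bounded, subset of $L_p(D)$ and admits a finite $\epsilon/2$-net, while the tail has $L_p$-norm less than $\epsilon/2$. Thus $\widehat{F}$ admits a finite $\epsilon$-net in $L_p(D)$ for every $\epsilon > 0$; being totally bounded in the complete space $L_p(D)$, it is precompact.
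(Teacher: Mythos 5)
Your proof is correct, but it takes a genuinely different route from the paper's. Both arguments isolate the short\nobreakdash-time tail $\int_{t-\eta}^{t}$ and bound it by $M e^{\gamma T}\sup_{u\in F}\lVert u\rVert_{L_{\infty}((0,T),L_p(D))}\,\eta$; the difference is in how the main part is shown to be precompact. You factor the smoothing operator out of the integral via the cocycle identity, writing the main part as $U_{a_0}(t,t-\eta)\,w$ with $w$ in a fixed bounded set $B$, and then need precompactness of $\{\,U_{a_0}(t,t-\eta)v : a_0\in Y_0,\ t\in[T_1,T],\ v\in B\,\}$ over \emph{varying} starting times $t-\eta$ — which, as you correctly note, is not literally Proposition~\ref{prop:local-regularity-higher_order} or Proposition~\ref{prop:higher_order-compactness} and has to be re\nobreakdash-derived from \cite[Chpt.~III, Thm.~10.1]{LaSoUr}, whose constants depend only on the ellipticity constant, the $L_\infty$ bounds and the gap $\eta$. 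The paper instead keeps the operator inside the integral: for $\zeta\le t-1/l$ the integrand $U_{a_0}(t,\zeta)u(\zeta)$ takes values in a fixed compact set $\widecheck{F}_l$ furnished by Proposition~\ref{prop:higher_order-compactness}, so by \cite[Cor.~II.2.8]{DiUhl} and Mazur's theorem the truncated integral lies in $T\cdot\conv\widecheck{F}_l$, which is compact; a diagonal extraction over $l$ then produces a Cauchy subsequence. The paper's route leans only on already\nobreakdash-stated propositions plus two standard facts about Bochner integrals, at the price of a sequential diagonal argument; your route is conceptually cleaner (direct total boundedness, a single $\eta$ rather than a sequence $1/l$) but obliges you to supply the uniform\nobreakdash-in\nobreakdash-starting\nobreakdash-time H\"older estimate, which is genuine extra work even though it does follow from the cited regularity theorem. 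One small point to make explicit: pulling $U_{a_0}(t,t-\eta)$ out of the Bochner integral uses Hille's theorem \cite[Thm.~II.2.6]{DiUhl} in addition to the cocycle identity of Proposition~\ref{Extension_identyty_cocycle}.
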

\begin{proof}
    Compare~\cite[Thm.~6.1.3]{Monogr}.  Fix $p$, $T_1$ and $F$ as in the statement. Let $(t_{m})_{m=1}^{\infty} \subset [T_1,T]$, $(a_{0,m})_{m=1}^{\infty} \subset Y_{0}$,  $(u_{m})_{m=1}^{\infty} \subset F$.   We claim that for any fixed $l \in \NN$ the set
    \begin{equation*}
        \begin{aligned}
            \widetilde{F}_{l} & \vcentcolon= \biggl\{ \int\limits_{0}^{t_{m}-\frac{1}{l}} U_{a_{0,m}}(t_{m}, {\zeta}) u_{m}({\zeta}) \, \mathrm{d}\zeta : {m} \in \NN \biggr\}.
        \end{aligned}
    \end{equation*}
    is precompact in $L_{p}(D)$. Denote by $M_0 > 0$ the supremum of the $L_{\infty}((0, T), \allowbreak L_{p}(D))$\nobreakdash-\hspace{0pt}norms of $u_m$, and put $\widecheck{F}_{l}$ to be the closure in $L_{p}(D)$ of the set
    \begin{equation*}
        \left\{U_{a_0}(s, 0) {\tilde{u}}: a_0 \in Y_{0}, s\in \Bigl[  \frac{1}{l},T\Bigr], \rVert \tilde{u} \lVert_{L_p(D)} \le M_0 \right\}
    \end{equation*}
    The set $\widecheck{F}_{l}$ is balanced.  We have $\widecheck{F}_{l} \subset \widecheck{F}_{l+1}$.  By Proposition~\ref{prop:higher_order-compactness}, $\widecheck{F}_{l}$ is compact.  \cite[Cor.~II.2.8]{DiUhl} implies that
    \begin{equation*}
        \int\limits_{0}^{t_{m}-\frac{1}{l}} U_{a_{0,m}}(t_{m}, \zeta) u_m(\zeta) \, \mathrm{d}\zeta \in T \cdot \conv{\widecheck{F}_{l}}
    \end{equation*}
    where $\conv$ denotes the closed convex hull in $L_p(D)$. As, by Mazur's theorem (\cite[Thm.~II.2.12]{DiUhl}), $T \cdot \conv{\widecheck{F}_{l}}$ is compact for any $l \in \NN$, this proves our claim that $\widetilde{F}_{l}$ are precompact in $L_{p}(D)$.  By a diagonal process we can assume without loss of generality that for each $l \in \NN$ the integrals
    \begin{equation*}
        \int\limits_{0}^{{t_m}-\frac{1}{l}} U_{a_{0,m}}(t_{m}, \zeta) u_{m}(\zeta) \, \mathrm{d}\zeta
    \end{equation*}
    converge, as $m \to \infty$, in $L_{p}(D)$.

    Lemma~\ref{lm:I_i_(i)} guarantees that the functions $U_{a_{0,m}}(t_m, \cdot) u_m(\cdot)$
    belong to $L_{\infty}((0, t_m), L_p(D))$, with their $L_{\infty}((0, t_m), L_p(D))$\nobreakdash-\hspace{0pt}norms bounded uniformly in $m$.  We estimate, via H\"older's inequality,

\begin{multline*}
    \biggl\lVert \int_{t_{m}-\frac{1}{l}}^{t_m} U_{a_{0,m}}(t_m, \zeta) u_m(\zeta) \, \mathrm{d}{\zeta} \biggr\rVert_{L_p(D)}
       \\
     \le \int_{t_{m}-\frac{1}{l}}^{t_m} \lVert U_{a_{0,m}}(t_m, \zeta) u_m(\zeta) \rVert_{L_p(D)} \, \mathrm{d}{\zeta}
     \\
     \le \norm{U_{a_{0,m}}(t_m, \cdot) u_m(\cdot)}_{L_{\infty}((0, t_{m}), L_p(D))} \cdot (1/l).
\end{multline*}

   It follows from Proposition~\ref{prop:higher_orders-skew_product-p-q-estimates_2} that for any $\epsilon > 0$ there is $l_0 \in \NN$ such that
    \begin{equation*}
        \biggl\lVert \, \int\limits_{t_{m}-\frac{1}{l_0}}^{t_m} U_{a_{0,m}}(t_{m}, \zeta) u_{m}(\zeta) \, \mathrm{d}\zeta  \, \biggr\rVert_{L_{p}(D)} < \frac{\epsilon}{3}
    \end{equation*}
    uniformly in $m \in \NN$.  By the previous paragraph, there is $m_0$ such that if $m_1, m_2 \ge m_0$ then
    \begin{equation*}
        \biggl\lVert \int\limits_{0}^{t_{{m}_1} - \frac{1}{l_0}} U_{a_{0,m_1}}(t_{{m}_1}, \zeta) u_{{m}_1}(\zeta) \, \mathrm{d}\zeta - \int\limits_{0}^{t_{{m}_2} - \frac{1}{l_0}} U_{a_{0,m_2}}(t_{{m}_2}, \zeta) u_{{m}_2}(\zeta) \, \mathrm{d}\zeta \,
        \biggr\rVert_{L_{p}(D)} < \frac{\epsilon}{3}.
    \end{equation*}
    Therefore
    \begin{equation*}
        \biggl\lVert \int\limits_{0}^{t_{{m}_1}} U_{a_{0,m_1}}(t_{{m}_1}, \zeta) u_{{m}_1}(\zeta) \, \mathrm{d}\zeta - \int\limits_{0}^{t_{{m}_2}} U_{a_{0,m_2}}(t_{{m}_2}, \zeta) u_{{m}_2}(\zeta) \, \mathrm{d}\zeta \,
        \biggr\rVert_{L_{p}(D)} < \epsilon
    \end{equation*}
    for any $m_1, m_2 \ge m_0$.

    From this it follows that
    \begin{equation*}
        \biggl( \int\limits_{0}^{t_{m}} \widetilde{U}_{a_{0,m}}(t_{m}, \zeta) u_{m}(\zeta) \, \mathrm{d}\zeta \biggr)_{\!\! m = 1}^{\!\! \infty}
    \end{equation*}
    is a Cauchy sequence in $L_{p}(D)$.  Therefore $\widehat{F}$ is precompact in $L_{p}(D)$. \qed
\end{proof}

\begin{lemma}
\label{lm:compact-aux-1}
    For any $1 < p < \infty$ and any bounded $F \subset L_{\infty}((0, T), L_p(D))$ the set
    \begin{equation*}
        \biggl\{ \Bigl[\, [0, T] \ni t \mapsto \int\limits_{0}^{t} U_{a_0}(t, \zeta) u(\zeta) \, \mathrm{d}\zeta \in L_{p}(D) \,\Bigr]: a_0 \in Y_{0}, \ u \in F \biggr\}
    \end{equation*}
    is precompact in $C([0, T], L_{p}(D))$.
\end{lemma}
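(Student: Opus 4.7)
The plan is to apply the vector\nobreakdash-\hspace{0pt}valued Arzel\`a--Ascoli theorem in $C([0, T], L_p(D))$, which requires: (a) pointwise precompactness at each $t \in [0, T]$, and (b) equicontinuity of the family uniform in $(a_0, u) \in Y_0 \times F$. Part~(a) is immediate from Lemma~\ref{lm:compact-aux} applied with $T_1 = t > 0$ (at $t = 0$ every element of the family vanishes), so the whole task is to establish (b).

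Set $K \vcentcolon= \sup_{u \in F}\norm{u}_{L_{\infty}((0,T), L_p(D))}$ and, for brevity, $v_{a_0, u}(t) \vcentcolon= \int_0^t U_{a_0}(t, \zeta) u(\zeta) \, \mathrm{d}\zeta$.  Equicontinuity at $t_0 = 0$ is free: Proposition~\ref{prop:higher_orders-skew_product-p-q-estimates_2} gives $\norm{v_{a_0, u}(t)}_{L_p(D)} \le MKe^{\gamma T}\, t \to 0$ uniformly in $(a_0, u)$ as $t \to 0^+$.  The substantive work is at a fixed $t_0 \in (0, T]$.

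Fix such $t_0$, choose an auxiliary parameter $\eta \in (0, t_0)$, assume $|t - t_0| < \eta/2$, and treat (say) the case $t \ge t_0$ (the case $t < t_0$ is analogous).  Splitting $v_{a_0, u}(t) - v_{a_0, u}(t_0)$ at $t_0$ and $t_0 - \eta$, and using the cocycle identity~\eqref{eq:cocycle2-2_p} to pull a common factor out of the integral over $[0, t_0 - \eta]$, one obtains
\begin{equation*}
\begin{aligned}
v_{a_0, u}(t) - v_{a_0, u}(t_0) = {} & \int\limits_{t_0}^{t} U_{a_0}(t, \zeta) u(\zeta) \, \mathrm{d}\zeta + \int\limits_{t_0 - \eta}^{t_0} \bigl[ U_{a_0}(t, \zeta) - U_{a_0}(t_0, \zeta) \bigr] u(\zeta) \, \mathrm{d}\zeta \\
& + \bigl[ U_{a_0}(t, t_0 - \eta) - U_{a_0}(t_0, t_0 - \eta) \bigr] v_{a_0, u}(t_0 - \eta).
\end{aligned}
\end{equation*}
By Proposition~\ref{prop:higher_orders-skew_product-p-q-estimates_2}, the first two summands are bounded in $L_p(D)$ by $MKe^{\gamma T}(t - t_0)$ and $2MKe^{\gamma T}\eta$ respectively, both uniform in $(a_0, u)$.

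The decisive and most subtle step is the third summand: because we are \emph{not} assuming \eqref{a: compactness of Y }, we have no direct handle on $\norm{U_{a_0}(t, t_0 - \eta) - U_{a_0}(t_0, t_0 - \eta)}_{\mathcal{L}(L_p(D))}$ uniformly in $a_0$.  The trick is that this difference is applied only to $v_{a_0, u}(t_0 - \eta)$, and by Lemma~\ref{lm:compact-aux} (applied with $T_1 = t_0 - \eta$) the set $E \vcentcolon= \{\, v_{a_0, u}(t_0 - \eta) : a_0 \in Y_0,\, u \in F \,\}$ is precompact, and hence bounded, in $L_p(D)$.  Proposition~\ref{prop:higher_order-compactness}, applied with $s = t_0 - \eta$ and $T_1 = t_0 - \eta/2$ to this bounded $E$, guarantees that the family $\{\, [t_0 - \eta/2, T] \ni t \mapsto U_{a_0}(t, t_0 - \eta) w : a_0 \in Y_0,\, w \in E \,\}$ is precompact, hence equicontinuous at $t_0$, in $C([t_0 - \eta/2, T], L_p(D))$.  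Consequently the third summand tends to zero as $t \to t_0$, uniformly in $(a_0, u)$.  First choosing $\eta$ small to absorb the second summand, and then $|t - t_0|$ small, yields pointwise equicontinuity at each $t_0$, and a standard contradiction argument using the compactness of $[0, T]$ upgrades this to uniform equicontinuity.  Arzel\`a--Ascoli then completes the proof.
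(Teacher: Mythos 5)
Your proof is correct, and while the skeleton (Arzel\`a--Ascoli, pointwise precompactness from Lemma~\ref{lm:compact-aux}, and the three-way split of $v_{a_0,u}(t)-v_{a_0,u}(t_0)$ at $t_0$ and $t_0-\eta$) coincides with the paper's, you handle the decisive third piece in a genuinely different way. The paper bounds $\int_0^{t_1-\eta}\bigl(U_{a_0}(t_2,\zeta)-U_{a_0}(t_1,\zeta)\bigr)u(\zeta)\,\dd\zeta$ in norm by invoking the uniform continuity of $\bigl[\,Y_0\times\dot\Delta\ni(a_0,s,t)\mapsto U_{a_0}(t,s)\in\mathcal{L}(L_p(D))\,\bigr]$ away from the diagonal, i.e.\ Proposition~\ref{prop:higher_orders-skew_product-norm_continuity}; you instead use the cocycle identity~\eqref{eq:cocycle2-2_p} (together with Hille's theorem to pull the operator out of the Bochner integral) to collapse that piece into $\bigl[U_{a_0}(t,t_0-\eta)-U_{a_0}(t_0,t_0-\eta)\bigr]v_{a_0,u}(t_0-\eta)$, and then exploit the precompactness of $\{v_{a_0,u}(t_0-\eta)\}$ from Lemma~\ref{lm:compact-aux} together with the equicontinuity supplied by Proposition~\ref{prop:higher_order-compactness}. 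The trade-off is noteworthy: Proposition~\ref{prop:higher_orders-skew_product-norm_continuity} carries the extra hypothesis~\eqref{a: compactness of Y }, which is \emph{not} among the standing assumptions of the section containing this lemma (it is only added later, in Section~\ref{sect:dependence-on-parameters}), so your route proves the statement under the section's stated hypotheses, whereas the paper's proof as written quietly imports~\eqref{a: compactness of Y }. The paper's approach, when that assumption is available, is slightly more direct in that it produces an explicit uniform modulus of continuity without passing through compactness of the orbit set. Your argument is essentially the uniform-in-$a_0$ upgrade of the paper's own proof of Lemma~\ref{lm:I_i_(ii)}, and all the auxiliary steps (boundedness of $E$, equicontinuity of a precompact subset of $C([T_1,T],L_p(D))$, the passage from pointwise to uniform equicontinuity on the compact interval) are sound.
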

\begin{proof}
    By the Ascoli--Arzel\`a theorem, it suffices, taking Lemma~\ref{lm:compact-aux} into account, to show that for any $\epsilon > 0$ there is $\delta > 0$ such that, if $0 \le t_1 \le t_2 \le T$, $t_2 - t_1 < \delta$, then
    \begin{equation*}
        \biggl\lVert \int_{0}^{t_2} U_{a_0}(t_2, {\zeta}) u({\zeta}) \, \mathrm{d}{\zeta} - \int_{0}^{t_1} U_{a_0}(t_1, {\zeta}) u({\zeta}) \, \mathrm{d}\zeta \biggr\rVert_{L_{p}(D)} < \epsilon
    \end{equation*}
    for all $a_0 \in Y_{0}$ and all $u \in F$.  In order not to introduce too many constants we assume that $F$ equals the unit ball in $L_{\infty}((0, T), L_p(D))$.

    We write
    \begin{multline*}
        \int_{0}^{t_2} U_{a_0}(t_2, {\zeta}) u({\zeta}) \, \mathrm{d}{\zeta} - \int_{0}^{t_1} U_{a_0}(t_1, {\zeta}) u({\zeta}) \, \mathrm{d}{\zeta}
        \\
        = \int_{0}^{t_1} \bigl( U_{a_0}(t_2, {\zeta}) - U_{a_0}(t_1, {\zeta})\bigr) u({\zeta}) \, \mathrm{d}{\zeta} + \int_{t_1}^{t_2} U_{a_0}(t_2, {\zeta}) u({\zeta}) \, \mathrm{d}{\zeta}
    \end{multline*}
    By Proposition~\ref{prop:higher_orders-skew_product-p-q-estimates_2},
    \begin{equation}
    \label{eq:compact-aux-1}
        \biggl\lVert \int_{t_1}^{t_2} U_{a_0}(t_2, {\zeta}) u({\zeta}) \, \mathrm{d}{\zeta} \biggr\rVert_{L_{p}(D)} < \frac{\epsilon}{3},
    \end{equation}
    provided $t_2 - t_1 < \epsilon/(3 M e^{\gamma T})$.

    Further, we write
    \begin{multline*}
        \int_{0}^{t_1} \bigl( U_{a_0}(t_2, {\zeta}) - U_{a_0}(t_1, {\zeta})\bigr) u({\zeta}) \, \mathrm{d}{\zeta}
        \\
        = \int_{0}^{t_1 - \eta} \bigl( U_{a_0}(t_2, {\zeta}) - U_{a_0}(t_1, {\zeta})\bigr) u({\zeta}) \, \mathrm{d}{\zeta} + \int_{t_1 - \eta}^{t_1} \bigl( U_{a_0}(t_2, {\zeta}) - U_{a_0}(t_1, {\zeta})\bigr) u({\zeta}) \, \mathrm{d}{\zeta}.
    \end{multline*}
    By Proposition~\ref{prop:higher_orders-skew_product-p-q-estimates_2}, if $0 < \eta < \epsilon/(6M e^{\gamma T})$ then
    \begin{equation}
    \label{eq:compact-aux-2}
        \biggl\lVert \int_{t_1 - \eta}^{t_1} \bigl( U_{a_0}(t_2, {\zeta}) - U_{a_0}(t_1, {\zeta})\bigr) u({\zeta}) \, \mathrm{d}{\zeta} \biggr\rVert_{L_{q}(D)} < \frac{\epsilon}{3}.
    \end{equation}
    It follows from Proposition~\ref{prop:higher_orders-skew_product-norm_continuity} that the assignment
    \begin{equation*}
        \bigl[\, Y_0 \times [\eta, T] \ni (a_0, t) \mapsto U_{a_0}(t, 0) \in \mathcal{L}(L_p(D)) \,\bigr]
    \end{equation*}
    is uniformly continuous, consequently there exists $\delta > 0$ such that if $\eta \le s_1 < s_2 $, $s_2 - s_1 < \delta$, then
    \begin{equation*}
        \lVert U_{a_0}(s_2, 0) - U_{a_0}(s_1, 0) \rVert_{\mathcal{L}(L_p(D))}  < \frac{\epsilon}{3 T}.
    \end{equation*}
   Therefore
    \begin{equation*}
    \label{eq:compact-aux-3}
        \biggl\lVert \int_{0}^{t_1 - \eta} \bigl( U_{a_0}(t_2, {\zeta}) - U_{a_0}(t_1, {\zeta})\bigr) u({\zeta}) \, \mathrm{d}{\zeta} \biggr\rVert_{L_{q}(D)}
        < \frac{\epsilon}{3}.
    \end{equation*}
    The estimates~\eqref{eq:compact-aux-1}, \eqref{eq:compact-aux-2} and~\eqref{eq:compact-aux-3} do not depend on the choice of $a_0 \in Y_0$, so gathering them gives the required property.  \qed
\end{proof}

\begin{theorem}
  \label{thm:delay-compactness-X_2}
  For any $0 < T_1 \le T$, any $1 < p < \infty$ and any bounded $E \subset C([- 1, 0], L_p(D))$ the set
  \begin{equation*}
    \bigl\{\, \bigl[\, [T_1, T] \ni t \mapsto u(t; a, u_0,R) \,\bigr] : a \in Y, u_0 \in E, R\in \mathcal{R} \,\bigr\}
  \end{equation*}
  is precompact in $C([T_1, T], L_p(D))$.
\end{theorem}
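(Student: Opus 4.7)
The plan is to write every mild solution as the sum of a homogeneous and an inhomogeneous part via \eqref{mild_equation_semip}:
\begin{equation*}
  u(t;a,u_0,R) = U_{\tilde{a}}(t,0)\,u_0(0) + \int_{0}^{t} U_{\tilde{a}}(t,\zeta)\,C^{1}_{a}(\zeta)\,u(\zeta - R(\zeta);a,u_0,R)\,\dd\zeta,
\end{equation*}
and to prove that each summand, restricted to $[T_1,T]$, is precompact in $C([T_1,T], L_p(D))$. The homogeneous piece is handled directly by Proposition~\ref{prop:higher_order-compactness}, since $\tilde{a} \in Y_0$ and $\{u_0(0) : u_0 \in E\}$ is bounded in $L_p(D)$. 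The inhomogeneous integrand will be handled by Lemma~\ref{lm:compact-aux-1}, once one bounds the forcing term in $L_\infty((0,T), L_p(D))$ uniformly in $(a, u_0, R)$; this in turn requires an a priori bound on the solutions themselves.

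For the a priori bound, set $\varphi(t) \vcentcolon= \sup_{s \in [-1, t]} \norm{u(s;a,u_0,R)}_{L_p(D)}$ for $t \in [0, T]$. Using \eqref{eq:L_p estymation+} together with Lemma~\ref{Boundednes of Multiplication Operator} (which gives $\norm{C^1_a(\zeta)}_{\mathcal{L}(L_p(D))} \le K$ a.e., uniformly in $a \in Y$), and noting that $\zeta - R(\zeta) \in [-1, \zeta]$ for every $\zeta \in [0, T]$, I would obtain
\begin{equation*}
  \varphi(t) \le M e^{\gamma T}\,\norm{u_0}_{C([-1,0], L_p(D))} + M K e^{\gamma T} \int_{0}^{t} \varphi(\zeta)\,\dd\zeta, \quad t\in[0,T].
\end{equation*}
Gronwall's inequality then yields a constant $C_0$ depending only on $M$, $K$, $\gamma$, $T$ and the norm bound on $E$ such that $\varphi(T) \le C_0$. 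Consequently, the forcing family $\{[\,\zeta \mapsto C^1_a(\zeta)\,u(\zeta - R(\zeta);a,u_0,R)\,] : a \in Y,\ u_0 \in E,\ R \in \mathcal{R}\}$ is bounded in $L_\infty((0,T), L_p(D))$ by $K C_0$.

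With this bound in hand, Proposition~\ref{prop:higher_order-compactness} yields the precompactness of the homogeneous family in $C([T_1, T], L_p(D))$, and Lemma~\ref{lm:compact-aux-1} yields precompactness of the inhomogeneous family in $C([0, T], L_p(D))$, which restricts to precompactness in $C([T_1, T], L_p(D))$. Since the sum of two precompact subsets of a Banach space is precompact, combining these gives the theorem. I expect the main obstacle to be the a priori bound, as it is the only place where the delay $R$ plays a substantive role: the elementary inequality $\zeta - R(\zeta) \le \zeta$ (rather than merely $\le T$) is what closes the Gronwall argument and produces a constant $C_0$ independent of $(a, R)$; everything else reduces to straightforward bookkeeping on top of the compactness results already established in Section~\ref{subsect:compactness} and Section~\ref{sect:weak-sol}.
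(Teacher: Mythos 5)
Your proposal is correct and follows essentially the same route as the paper: the same decomposition into the homogeneous term $U_{\tilde{a}}(t,0)u_0(0)$ (handled by Proposition~\ref{prop:higher_order-compactness}) and the Duhamel integral (handled by Lemma~\ref{lemma:v_m-boundednes} together with Lemma~\ref{lm:compact-aux-1}). The only difference is that you supply the uniform a~priori bound on solutions explicitly via Gronwall, whereas the paper leaves it implicit at this point and only records it later as Lemma~\ref{lemma: mild estim 1} and Proposition~\ref{prop: global mild L_p estim}; your version is, if anything, the more self-contained one.
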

\begin{proof}
    We will use the notation $I_i(t; a, u_0,R)$, $i = 0, 1$ where

    \begin{equation}
\label{eq:I}
    \begin{aligned}
       I_0(t) & \vcentcolon =  U_{\tilde{a}}(t, 0) u_0(0),
       \\[1ex]
       I_1(t) & \vcentcolon = \int\limits_{0}^{t} U_{\tilde{a}}(t, \zeta) C^1_{a}(\zeta) u(\zeta-R(\zeta)) \, \dd\zeta ,
    \end{aligned}
\end{equation}
     taking account of the parameter $a$ and the initial value $u_0$.  The precompactness of the set
    \begin{equation*}
        \bigl\{\, \bigl[\, [T_1, T] \ni t \mapsto I_0(t; a, u_0,R) \,\bigr] : a \in Y, u_0 \in E, R\in \mathcal{R} \,\bigr\}
    \end{equation*}
    in $C([T_1, T], L_p(D))$ is a consequence of Proposition~\ref{prop:higher_order-compactness}.
    In order to prove the precompactness in $C([T_1, T], L_p(D))$ of
    \begin{equation*}
        \bigl\{\, \bigl[\, [T_1, T] \ni t \mapsto I_1(t; a, u_0,R) \,\bigr] : a \in Y, u_0 \in E, R\in \mathcal{R} \,\bigr\},
    \end{equation*}
    it suffices to use results from Lemma~\ref{lemma:v_m-boundednes} and Lemma~\ref{lm:compact-aux-1}.  \qed
\end{proof}

Theorem~\ref{thm:delay-compactness-X_2} leads to the following conclusion about precompactness of the solutions up to zero. Since under additional assumption~\eqref{a:only_c_continuous} for a fixed $u_0 \in C([-1,0],L_p(D))$ the set
    \begin{equation*}
        \bigl\{\, \bigl[\, [0, T] \ni t \mapsto U_{\tilde{a}}(t, 0) u_0(0) \,\bigr] : a \in Y \,\bigr\}
    \end{equation*}
is simply a singleton, this observation combined with Lemma~\ref{lm:compact-aux-1} leads to the following result.
\begin{theorem}
\label{c:delay-compactness-up_to_0}
 Assume additionally~\eqref{a:only_c_continuous}. For any $1 < p < \infty$ and any $u_0 \in C([- 1, 0], L_p(D))$ the set
  \begin{equation*}
    \bigl\{\, \bigl[\, [0, T] \ni t \mapsto u(t; a, u_0,R) \,\bigr] : a \in Y, R\in \mathcal{R} \,\bigr\}
  \end{equation*}
  is precompact in $C([0, T], L_p(D))$.
\end{theorem}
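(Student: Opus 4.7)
The plan is to decompose the mild solution as $u(\cdot;a,u_0,R)=I_0(\cdot;a,u_0)+I_1(\cdot;a,u_0,R)$ using the splitting~\eqref{eq:I} from the proof of Theorem~\ref{thm:delay-compactness-X_2}, and to establish precompactness of each family in $C([0,T],L_p(D))$ separately. The sum of a singleton and a precompact set is precompact in the Banach space $C([0,T],L_p(D))$, so this suffices.

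For the $I_0$-part the assumption~\eqref{a:only_c_continuous} does all the work: $Y_0$ is a singleton, the flattening $a\mapsto\tilde a$ takes values in $Y_0$, and $u_0(0)\in L_p(D)$ is fixed; hence the map $\bigl[\,[0,T]\ni t\mapsto U_{\tilde a}(t,0)u_0(0)\,\bigr]$ is \emph{one and the same} element of $C([0,T],L_p(D))$, continuous on the whole of $[0,T]$ (including the point $t=0$) by~\eqref{eq:cocycle2-1} and Proposition~\ref{prop:higher_orders-skew_product-p_ii}. This is exactly the observation that was missing in Theorem~\ref{thm:delay-compactness-X_2}, where the dependence of $U_{\tilde a}(t,0)u_0$ on the initial datum $u_0$ ranging over a bounded set forced $T_1>0$.

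For the $I_1$-part, write $I_1(t;a,u_0,R)=\int_0^t U_{\tilde a}(t,\zeta)\,v_{a,u_0,R}(\zeta)\,\dd\zeta$ with $v_{a,u_0,R}(\zeta)\vcentcolon=C^1_a(\zeta)\,u(\zeta-R(\zeta);a,u_0,R)$. The goal is to invoke Lemma~\ref{lm:compact-aux-1}; its hypothesis is that the family $\{v_{a,u_0,R}:a\in Y,R\in\mathcal{R}\}$ be bounded in $L_\infty((0,T),L_p(D))$, and its conclusion then gives precompactness in $C([0,T],L_p(D))$ (the dependence on $a_0\in Y_0$ in the lemma is innocuous here, since $Y_0$ is a singleton). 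By Lemma~\ref{lemma:v_m-boundednes}, this required $L_\infty$-bound reduces to a uniform $C([-1,T],L_p(D))$-bound on the mild solutions $u(\cdot;a,u_0,R)$ as $a$ and $R$ vary. Such a bound is produced by inserting~\eqref{eq:L_p estymation+} and Lemma~\ref{Boundednes of Multiplication Operator} into the integral equation~\eqref{mild_equation_semip}, majorising the delayed value by $M(\zeta)\vcentcolon=\sup_{-1\le\xi\le\zeta}\|u(\xi;a,u_0,R)\|_{L_p(D)}$, and applying Gronwall's inequality; the resulting bound depends only on $\|u_0\|_{C([-1,0],L_p(D))}$, the norm bound $K$ of $Y$, and the constants $M,\gamma$ from Proposition~\ref{prop:higher_orders-skew_product-p-q-estimates}.

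The only substantive step is the uniform a priori estimate on the mild solutions, and this is a standard Gronwall argument adapted to the delayed right-hand side via the nondecreasing majorant $M(\zeta)$. Everything else is a direct citation of previously established facts: the $I_0$-piece is a single continuous curve by virtue of \eqref{a:only_c_continuous}, and the $I_1$-piece is precompact by Lemma~\ref{lm:compact-aux-1}. Combining the two yields precompactness of $\{u(\cdot;a,u_0,R):a\in Y,\,R\in\mathcal{R}\}$ in $C([0,T],L_p(D))$, as claimed.
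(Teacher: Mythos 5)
Your proposal is correct and follows essentially the same route as the paper: the paper likewise observes that under \eqref{a:only_c_continuous} the $I_0$\nobreakdash-\hspace{0pt}family is a singleton and then invokes Lemma~\ref{lm:compact-aux-1} (via Lemma~\ref{lemma:v_m-boundednes}) for the $I_1$\nobreakdash-\hspace{0pt}family. The only difference is cosmetic: you derive the uniform a~priori bound on the solutions by a Gronwall argument in place, whereas the paper obtains the same bound (with the same proof) as Proposition~\ref{prop: global mild L_p estim} in the following section.
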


\section{Continuous Dependence on Initial Conditions}
\label{sect:dependence_on_IC}
In the present section we assume \eqref{a: boundary regularity}, \eqref{a: boundedness} and that $Y$ as in Definition~\ref{def:Y} is such that its flattening $Y_0$ satisfies \eqref{Ellipticity}.  Further, $1 < p < \infty$.

\begin{definition}
\label{def:Dynamical Systems Norm}
 For $t\in [0,T]$, $a \in Y$, $u_0 \in C([-1,0],L_p(D))$ and $R \in \mathcal{R}$ we define
 \begin{align*}
     \delta(t;a,u_0,R) & =\sup_{\vartheta\in[-1,0]}\norm{u(t+\vartheta;a,u_0,R)}_{L_p(D)}
     \\
     & = \norm{u(t+\cdot\,;a,u_0,R)\upharpoonright_{[-1,0]}}_{C([-1,0],L_p(D))}.
 \end{align*}

For notational simplicity we often write $u(t+\,\cdot)$ instead of $u(t+\, \cdot\, ;a,u_0,R)$ and  $\delta(t)$ instead of $\delta(t;a,u_0,R)$ when $a \in Y$ and $u_0 \in C([-1,0],L_p(D))$ are fixed and this does not lead to confusion.
\end{definition}

\begin{lemma}
For any $a \in Y$, $u_0 \in C([-1,0],L_p(D))$ and $R \in \mathcal{R}$ the function $\delta(\cdot;a,u_0,R) \colon [0,T] \to \RR^+$ is continuous.
\end{lemma}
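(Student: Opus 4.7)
The plan is to reduce this to uniform continuity of the mild solution itself. By Theorem~\ref{thm:mild_solution_existence} (together with the definition of mild solution), the map $u = u(\cdot; a, u_0, R)$ belongs to $C([-1, T], L_p(D))$. Since $[-1, T]$ is compact, this map is uniformly continuous: for every $\epsilon > 0$ there exists $\eta > 0$ such that $\norm{u(\tau_1) - u(\tau_2)}_{L_p(D)} < \epsilon$ whenever $\tau_1, \tau_2 \in [-1, T]$ satisfy $\abs{\tau_1 - \tau_2} < \eta$.

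Now fix $t_1, t_2 \in [0, T]$ with $\abs{t_1 - t_2} < \eta$. For any $\vartheta \in [-1, 0]$ both $t_1 + \vartheta$ and $t_2 + \vartheta$ lie in $[-1, T]$ and differ by $\abs{t_1 - t_2} < \eta$, so
\begin{equation*}
   \bigl| \norm{u(t_1 + \vartheta)}_{L_p(D)} - \norm{u(t_2 + \vartheta)}_{L_p(D)} \bigr| \le \norm{u(t_1 + \vartheta) - u(t_2 + \vartheta)}_{L_p(D)} < \epsilon.
\end{equation*}
Taking the supremum over $\vartheta \in [-1, 0]$ on both sides (using the standard fact that $\abs{\sup_\vartheta f(\vartheta) - \sup_\vartheta g(\vartheta)} \le \sup_\vartheta \abs{f(\vartheta) - g(\vartheta)}$) yields $\abs{\delta(t_1; a, u_0, R) - \delta(t_2; a, u_0, R)} \le \epsilon$.

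Thus $\delta(\cdot; a, u_0, R)$ is (in fact uniformly) continuous on $[0, T]$. There is no real obstacle: the statement is purely a soft consequence of continuity of the mild solution as an $L_p(D)$-valued function on the compact interval $[-1, T]$, and of elementary properties of the supremum.
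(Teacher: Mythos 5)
Your proof is correct and rests on the same essential input as the paper's: the mild solution $u(\cdot;a,u_0,R)$ is continuous on the compact interval $[-1,T]$, so a soft supremum argument over $\vartheta\in[-1,0]$ gives continuity of $\delta$. The paper phrases this as continuity of the jointly continuous map $(t,\vartheta)\mapsto\norm{u(t+\vartheta)}_{L_p(D)}$ followed by compactness of $[-1,0]$, whereas you use uniform continuity plus the inequality $\abs{\sup_\vartheta f-\sup_\vartheta g}\le\sup_\vartheta\abs{f-g}$; these are the same argument in substance.
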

\begin{proof}
First note that the mapping
\begin{equation*}
    \big[\, [0,T] \times [-1,0]\ni (t,\vartheta) \mapsto \norm{u(t+\vartheta;a,u_0,R)}_{L_p(D)} \in \RR^+ \,\big]
\end{equation*}
is continuous as a composition of continuous mappings. Due to the compactness of $[-1,0]$ the $\delta(\cdot,a,u_0,R)$ function is continuous when $a \in Y$, $R\in \mathcal{R}$ and $u_0\in C([-1,0],L_p(D))$ are fixed. \qed
\end{proof}

\begin{lemma}
\label{lemma: mild estim 1}
There are constants $M_1, M_2$ such that for any $\rho \in [0,T]$ the inequality
\begin{equation*}
     \norm{u(\rho;a,u_0,R)}_{L_p(D)} \le M_1 \delta(0;a,u_0,R) + M_2 \int_{0}^{\rho} \delta(\zeta;a,u_0,R) \, \dd\zeta
\end{equation*}
holds  for all $a \in Y$, $u_0\in C([-1,0],L_p(D))$ and $R \in\mathcal{R}$.
\end{lemma}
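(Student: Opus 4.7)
The plan is to start from the mild solution identity
\begin{equation*}
u(\rho;a,u_0,R) = U_{\tilde a}(\rho,0)\,u_0(0) + \int_{0}^{\rho} U_{\tilde a}(\rho,\zeta)\, C^{1}_{a}(\zeta)\, u(\zeta-R(\zeta);a,u_0,R)\,\dd\zeta,
\end{equation*}
valid in $L_p(D)$ for every $\rho\in[0,T]$, and estimate both terms in $L_p(D)$ by combining the operator norm bounds already available in the excerpt.

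For the first term, Proposition~\ref{prop:higher_orders-skew_product-p-q-estimates} (the specialization in~\eqref{eq:L_p estymation+}), applied to $\tilde a\in Y_0$, gives
\begin{equation*}
\norm{U_{\tilde a}(\rho,0)u_0(0)}_{L_p(D)} \le M e^{\gamma\rho}\norm{u_0(0)}_{L_p(D)} \le M e^{\gamma T}\,\delta(0;a,u_0,R),
\end{equation*}
since $\norm{u_0(0)}_{L_p(D)}\le\delta(0;a,u_0,R)$ by the very definition of $\delta$. For the integral term, Lemma~\ref{Boundednes of Multiplication Operator} provides a uniform bound $\norm{C^{1}_{a}(\zeta)}_{\mathcal L(L_p(D))}\le K$ valid for a.e.\ $\zeta\in(0,T)$ and all $a\in Y$, while again~\eqref{eq:L_p estymation+} yields $\norm{U_{\tilde a}(\rho,\zeta)}_{\mathcal L(L_p(D))}\le M e^{\gamma(\rho-\zeta)}\le M e^{\gamma T}$ (using the standing convention $\gamma\ge 0$). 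Here I rely on the measurability observations preceding Lemma~\ref{lm:I_i} to guarantee that the integrand is legitimate.

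The only point that requires a small argument is the replacement of $\norm{u(\zeta-R(\zeta))}_{L_p(D)}$ by $\delta(\zeta;a,u_0,R)$. Since $R\in\mathcal R$ satisfies $R(\zeta)\in[0,1]$ for a.e.\ $\zeta\in(0,T)$, we have $\zeta-R(\zeta)=\zeta+\vartheta$ for some $\vartheta\in[-1,0]$, and therefore
\begin{equation*}
\norm{u(\zeta-R(\zeta);a,u_0,R)}_{L_p(D)}\le \sup_{\vartheta\in[-1,0]}\norm{u(\zeta+\vartheta;a,u_0,R)}_{L_p(D)} = \delta(\zeta;a,u_0,R)
\end{equation*}
for a.e.\ $\zeta\in(0,\rho)$. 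Combining everything,
\begin{equation*}
\norm{u(\rho;a,u_0,R)}_{L_p(D)} \le M e^{\gamma T}\,\delta(0;a,u_0,R) + MK e^{\gamma T}\int_{0}^{\rho}\delta(\zeta;a,u_0,R)\,\dd\zeta,
\end{equation*}
so taking $M_1\vcentcolon= M e^{\gamma T}$ and $M_2\vcentcolon= MK e^{\gamma T}$ gives the claim, with constants independent of $a\in Y$, $R\in\mathcal R$ and $u_0$.

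There is essentially no obstacle: the estimate is a direct consequence of (i) the uniform exponential bound on the weak-solution semiprocess in $\mathcal L(L_p(D))$, (ii) the uniform $L_\infty$-bound on the multiplication operators coming from condition~\eqref{Y1}, and (iii) the definitional fact that the delayed argument $\zeta-R(\zeta)$ always lies in the window $[\zeta-1,\zeta]$ over which $\delta(\zeta)$ takes its supremum. The mild point to watch is keeping the constants independent of $a$ and $R$, which is why the uniform boundedness on $Y$ (Lemma~\ref{Boundednes of Multiplication Operator}) and the fact that $M,\gamma$ in Proposition~\ref{prop:higher_orders-skew_product-p-q-estimates} do not depend on $a_0\in Y_0$ are both essential.
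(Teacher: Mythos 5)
Your proof is correct and follows essentially the same route as the paper's: the same decomposition of the mild-solution formula, the same uniform bounds $Me^{\gamma T}$ for the semiprocess and $K$ for the multiplication operator, the same observation that $\zeta-R(\zeta)\in[\zeta-1,\zeta]$ gives $\norm{u(\zeta-R(\zeta))}_{L_p(D)}\le\delta(\zeta)$, and the identical constants $M_1=Me^{\gamma T}$, $M_2=MKe^{\gamma T}$.
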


\begin{proof}
Fix $\rho\in[0,T]$ and note that
\begin{align*}
    \norm{u(\rho)}_{L_p(D)}&\le \norm{U_{\tilde{a}}(\rho)u_0(0)}_{L_p(D)}+\int_{0}^{\rho} \norm{\U_{\tilde{a}} (\rho,\zeta) C^{1}_{a}(\zeta)(u\circ\Phi)(\zeta) }_{L_p(D)}\,\dd\zeta \\
    &\le Me^{\gamma \rho} \norm{u_0(0)}_{L_p(D)}+  Me^{\gamma\rho}K \int_{0}^{\rho} \norm{(u\circ\Phi)(\zeta) }_{L_p(D)}\,\dd\zeta \\
    &\le Me^{\gamma \rho} \norm{u_0}_{C([-1,0],L_p(D))}+  Me^{\gamma\rho}K \int_{0}^{\rho} \norm{u(\zeta+\cdot)\upharpoonright_{[-1,0]}}_{C([-1,0],L_p(D))}\,\dd\zeta \\
    &=M_1 \delta(0)+ M_2\int_{0}^{\rho} \delta(\zeta)\,\dd\zeta,
\end{align*}
where $M$ is a uniform bound of the operator $U_{\tilde{a}}(t)$ with respect to $a \in Y$ and $0 \le t \le T$ (see Proposition~\ref{prop:higher_orders-skew_product-p-q-estimates_2}), the constant $K$ is a uniform bound of the operator $C^{1}_{a}(\zeta)$ with respect to $a \in Y$ and $0 \le t \le T$ (see Lemma~\ref{Boundednes of Multiplication Operator}).  Moreover, the bounds $M$ and $K$ are independent on initial condition $u_0$. By setting $M_1 = Me^{\gamma T}$, $M_2 = MKe^{\gamma T}$ we end the proof.  \qed
\end{proof}

From now on, throughout this section the constants $M_1$ and $M_2$ will be defined as in  Lemma~\ref{lemma: mild estim 1}.

\begin{proposition}
\label{zero_initial_condi_esti}
For any sequence $(u_{0,m})_{m=1}^{\infty}\subset C([-1,0],L_p(D))$ convergent to zero, any $t \in[0,T]$, $R\in \mathcal{R}$ and $a \in Y$ the sequence $\delta_m(t) \vcentcolon= \delta(t;a,u_{0,m},R)$ converges to zero.
\end{proposition}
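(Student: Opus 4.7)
The plan is to reduce the statement to an integral (Gronwall-type) inequality for $\delta_m$ itself, using Lemma~\ref{lemma: mild estim 1} and the trivial bound on $u$ over the prehistory interval $[-1,0]$.

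First I would note that $\delta_m(0) = \|u_{0,m}\|_{C([-1,0],L_p(D))} \to 0$ by hypothesis, so the ``boundary'' value of $\delta_m$ vanishes. Next, for $\rho \in [0,T]$ and $\vartheta \in [-1,0]$, I distinguish two cases for the argument $\rho+\vartheta$:
\begin{itemize}[label=$\bullet$]
\item If $\rho+\vartheta \in [-1,0]$ (which happens when $\vartheta \le -\rho$, i.e.\ only when $\rho \le 1$), then by the initial condition \eqref{mild_initial_condi}, $u(\rho+\vartheta;a,u_{0,m},R)=u_{0,m}(\rho+\vartheta)$, so
\begin{equation*}
\norm{u(\rho+\vartheta;a,u_{0,m},R)}_{L_p(D)} \le \norm{u_{0,m}}_{C([-1,0],L_p(D))} = \delta_m(0).
\end{equation*}
\item If $\rho+\vartheta \in [0,\rho]$, then by Lemma~\ref{lemma: mild estim 1} applied at $\rho+\vartheta$,
\begin{equation*}
\norm{u(\rho+\vartheta;a,u_{0,m},R)}_{L_p(D)} \le M_1\delta_m(0) + M_2\int_0^{\rho+\vartheta}\delta_m(\zeta)\,\dd\zeta \le M_1\delta_m(0) + M_2\int_0^{\rho}\delta_m(\zeta)\,\dd\zeta.
\end{equation*}
\end{itemize}
Since $M_1 = Me^{\gamma T} \ge 1$, taking the supremum over $\vartheta \in [-1,0]$ gives, for every $\rho \in [0,T]$,
\begin{equation*}
\delta_m(\rho) \le M_1\delta_m(0) + M_2\int_0^{\rho}\delta_m(\zeta)\,\dd\zeta.
\end{equation*}

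Finally I invoke Gronwall's inequality (the continuity of $\delta_m$, established above, ensures its applicability) to conclude
\begin{equation*}
\delta_m(\rho) \le M_1\delta_m(0)\,e^{M_2\rho} \le M_1 e^{M_2 T}\,\delta_m(0), \qquad \rho \in [0,T].
\end{equation*}
Since $\delta_m(0) \to 0$ as $m \to \infty$, we obtain $\delta_m(t) \to 0$ for every $t \in [0,T]$ (in fact uniformly in $t$), completing the proof.

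There is no genuine obstacle; the only subtlety is correctly handling the two regimes for $\rho+\vartheta$ (pre-history versus post-initial-time) so that the right-hand side involves $\delta_m$ itself rather than $\|u(\cdot)\|_{L_p(D)}$, which is what lets Gronwall close the estimate cleanly.
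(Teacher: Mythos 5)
Your proposal is correct and follows essentially the same route as the paper's proof: the same two-case split on whether $\rho+\vartheta$ lies in the prehistory interval or in $[0,\rho]$, the same integral inequality $\delta_m(\rho)\le M_1\delta_m(0)+M_2\int_0^\rho\delta_m(\zeta)\,\dd\zeta$ via Lemma~\ref{lemma: mild estim 1}, and the same Gr\"onwall conclusion. Your explicit remark that $M_1\ge 1$ is needed to absorb the prehistory case is a small point the paper leaves implicit.
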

\begin{proof}
Fix $t \in[0,T]$ and $-1\le \vartheta\le 0$ and let us consider two cases.

\begin{itemize}[label=$\bullet$]
    \item If $0\le t+\vartheta\le T$ then from Lemma~\ref{lemma: mild estim 1} there holds
\begin{align*}
    \norm{u(t+\vartheta)}_{L_p(D)}&\le M_1 \delta(0)+ M_2\int_{0}^{t+\vartheta} \delta(\zeta)\,\dd\zeta \\
    &\le M_1 \delta(0)+ M_2\int_{0}^{t} \delta(\zeta)\,\dd\zeta.
\end{align*}
\item If $-1\le t+\vartheta\le 0$ then the inequality
\begin{align*}
    \norm{u(t+\vartheta)}_{L_p(D)} \le M_1 \delta(0)+ M_2\int_{0}^{t} \delta(\zeta)\,\dd\zeta
\end{align*}
is straightforward, as even the stronger one $\norm{u(t+\vartheta)}_{L_p(D)} \le M_1 \delta(0)$ is true.
\end{itemize}

Applying $\sup$ with respect to $\vartheta$ on both sides give us that
\begin{equation*}
    \sup_{\vartheta\in[-1,0]}\norm{u(t+\vartheta,a,u_0,R)}_{L_p(D)}\le M_1 \delta(0)+ M_2\int_{0}^{t} \delta(\zeta)\,\dd\zeta,
\end{equation*}
what can be rewritten in terms of the $\delta$ function as
\begin{equation*}
   \delta(t)\le M_1 \delta(0)+ M_2\int_{0}^{t} \delta(\zeta)\,\dd\zeta.
\end{equation*}
The function $\delta$ is nonnegative and continuous on the compact domain, hence it is integrable. Using the Grönwall lemma we get
\begin{equation}
\label{eq:Gronwall}
    \delta(t)\le M_1 \delta(0)\exp \Big(M_2\int_{0}^{t}\,\dd\zeta \Big).
\end{equation} \qed
\end{proof}

The above Lemma~\ref{lemma: mild estim 1} and Proposition~\ref{zero_initial_condi_esti} lead to global $L_p$\nobreakdash-\hspace{0pt}norm estimation of the mild solution of~$\eqref{main-eq}_a+\eqref{main-bc}_a$ in terms of initial conditions.
\begin{proposition}
\label{prop: global mild L_p estim}
There is constant $\overline{M} > 0$ such that inequality
\begin{equation*}
     \norm{u(t;a,u_0,R)}_{L_p(D)} \le \overline{M} \norm{u_0}_{C([-1,0],L_p(D))}
\end{equation*}
holds for any $1<p<\infty$, $t\in [0,T]$, $a\in Y$, $R\in\mathcal{R}$ and $u_0 \in C([-1,0],L_p(D))$.
\end{proposition}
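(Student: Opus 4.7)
The proof is essentially implicit in the computation already carried out for Proposition~\ref{zero_initial_condi_esti}: the estimate there never used the hypothesis that the initial data tend to zero, only that $\delta(0) \to 0$ was then applied to conclude. I would therefore simply isolate the Gronwall step and read off a uniform constant.

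The plan is as follows. First, I would fix $1 < p < \infty$, $a \in Y$, $R \in \mathcal{R}$ and $u_0 \in C([-1,0],L_p(D))$, and argue that for every $t \in [0,T]$ and every $\vartheta \in [-1,0]$,
\begin{equation*}
    \norm{u(t+\vartheta;a,u_0,R)}_{L_p(D)} \le M_1 \delta(0) + M_2 \int_0^t \delta(\zeta)\,\dd\zeta.
\end{equation*}
When $t+\vartheta \ge 0$ this is Lemma~\ref{lemma: mild estim 1} (using monotonicity of the integral to replace $t+\vartheta$ by $t$); when $t+\vartheta \in [-1,0]$ the left-hand side equals $\norm{u_0(t+\vartheta)}_{L_p(D)} \le \delta(0)$, which is trivially dominated by $M_1 \delta(0)$ (we may assume $M_1 \ge 1$, as is visible from the definition $M_1 = Me^{\gamma T}$ given in the proof of Lemma~\ref{lemma: mild estim 1}).

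Taking the supremum over $\vartheta \in [-1,0]$ yields the integral inequality
\begin{equation*}
    \delta(t) \le M_1 \delta(0) + M_2 \int_0^t \delta(\zeta)\,\dd\zeta, \qquad t \in [0,T],
\end{equation*}
to which Gronwall's lemma applies (the function $\delta$ is continuous on $[0,T]$, hence integrable), giving
\begin{equation*}
    \delta(t) \le M_1 \delta(0)\, e^{M_2 t} \le M_1 e^{M_2 T}\, \delta(0).
\end{equation*}

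Finally, by Definition~\ref{def:Dynamical Systems Norm}, $\delta(0) = \norm{u_0}_{C([-1,0],L_p(D))}$, and the choice $\vartheta = 0$ in the definition of $\delta(t)$ gives $\norm{u(t;a,u_0,R)}_{L_p(D)} \le \delta(t)$. Setting $\overline{M} \vcentcolon= M_1 e^{M_2 T}$, which depends only on the constants $M, K, \gamma, T$ from Proposition~\ref{prop:higher_orders-skew_product-p-q-estimates_2} and Lemma~\ref{Boundednes of Multiplication Operator} and hence is uniform in $a \in Y$, $R \in \mathcal{R}$ and $u_0$, completes the proof. There is no real obstacle here; the only thing to notice is that the argument in Proposition~\ref{zero_initial_condi_esti} never required smallness of $\delta(0)$, so the same Gronwall estimate, interpreted as a linear bound in $\norm{u_0}_{C([-1,0],L_p(D))}$, immediately yields the proposition.
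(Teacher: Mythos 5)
Your proposal is correct and follows essentially the same route as the paper: the paper's proof simply combines $\norm{u(t)}_{L_p(D)} \le \delta(t)$ with the Gronwall estimate~\eqref{eq:Gronwall} already established in the proof of Proposition~\ref{zero_initial_condi_esti} (which, as you observe, holds for arbitrary initial data), and sets $\overline{M} = M_1\exp(M_2 T)$. Your only addition is to spell out explicitly that the derivation of~\eqref{eq:Gronwall} never used the smallness of $\delta(0)$, which the paper leaves implicit.
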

\begin{proof}
Let $\overline{M} = M_1\exp(M_2T)$, where $M_1,M_2$ are constants as in Lemma~\ref{lemma: mild estim 1}. Then by Proposition~\ref{zero_initial_condi_esti} we can write
\begin{align*}
     \norm{u(t;a,u_0,R)}_{L_p(D)} \le {} & \delta(t;a,u_0,R)\\
     \le {} & \overline{M} \norm{u_0}_{C([-1,0],L_p(D))}.
\end{align*} \qed
\end{proof}
\begin{proposition}
 For any $a\in Y$ and $R\in\mathcal{R}$ the mapping
\begin{equation*}
    \big[\,C([-1,0],L_p(D))\ni u_0\mapsto u(\cdot;a,u_0,R) \in C([-1,T],L_p(D)) \,\big]
\end{equation*}
is continuous.
\end{proposition}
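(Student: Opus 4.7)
The plan is to exploit linearity together with the global bound from Proposition~\ref{prop: global mild L_p estim}.

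First I would verify that for fixed $a\in Y$ and $R\in\mathcal{R}$, the assignment $u_0 \mapsto u(\cdot;a,u_0,R)$ is linear. Indeed, given two initial conditions $u_{0,1}, u_{0,2}\in C([-1,0],L_p(D))$ with associated mild solutions $u_1,u_2$, the function $w \vcentcolon= u_1-u_2$ satisfies $w(t)=u_{0,1}(t)-u_{0,2}(t)$ on $[-1,0]$ and, because each of $u_1,u_2$ satisfies the integral equation~\eqref{mild_equation_semip}, we obtain on $[0,T]$
\begin{equation*}
    w(t) = U_{\tilde{a}}(t,0)(u_{0,1}(0)-u_{0,2}(0)) + \int_{0}^{t} U_{\tilde{a}}(t,\zeta)\, C^{1}_{a}(\zeta)\, w(\zeta-R(\zeta))\,\dd\zeta.
\end{equation*}
By uniqueness of the global mild solution (Theorem~\ref{thm:mild_solution_existence}), $w = u(\cdot;a,u_{0,1}-u_{0,2},R)$, and the same identity holds for scalar multiples. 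Hence the map is linear.

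Next I would apply Proposition~\ref{prop: global mild L_p estim}: there exists a constant $\overline{M}>0$ (independent of $a,R,u_0$) such that
\begin{equation*}
    \norm{u(t;a,u_0,R)}_{L_p(D)} \le \overline{M}\, \norm{u_0}_{C([-1,0],L_p(D))}, \qquad t\in[0,T].
\end{equation*}
For $t\in[-1,0]$ we simply have $u(t;a,u_0,R)=u_0(t)$, so $\norm{u(t;a,u_0,R)}_{L_p(D)}\le \norm{u_0}_{C([-1,0],L_p(D))}$. Taking the supremum over $t\in[-1,T]$ yields
\begin{equation*}
    \norm{u(\cdot;a,u_0,R)}_{C([-1,T],L_p(D))} \le \max(1,\overline{M})\, \norm{u_0}_{C([-1,0],L_p(D))}.
\end{equation*}

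Combining this bound with linearity gives
\begin{equation*}
    \norm{u(\cdot;a,u_{0,1},R) - u(\cdot;a,u_{0,2},R)}_{C([-1,T],L_p(D))} \le \max(1,\overline{M})\, \norm{u_{0,1}-u_{0,2}}_{C([-1,0],L_p(D))},
\end{equation*}
which is Lipschitz continuity (in particular, continuity) of the map in question. There is no real obstacle here since everything reduces to previously established facts; the only point requiring some care is the clean verification that $w=u_1-u_2$ indeed satisfies the mild solution equation for the difference of initial data, so that uniqueness from Theorem~\ref{thm:mild_solution_existence} can be invoked to identify $w$ with $u(\cdot;a,u_{0,1}-u_{0,2},R)$.
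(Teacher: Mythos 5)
Your proposal is correct and follows essentially the same route as the paper: both exploit linearity of the solution map (which the paper asserts directly, while you verify it via the integral equation and uniqueness) and then apply the a~priori bound of Proposition~\ref{prop: global mild L_p estim} (equivalently, the Gr\"onwall estimate~\eqref{eq:Gronwall}, since $\overline{M}=M_1\exp(M_2T)$) to obtain Lipschitz continuity. The only cosmetic difference is that the paper phrases the bound through $\sup_{t\in[0,T]}\delta(t;a,u_{0,1}-u_{0,2},R)$, which already covers the interval $[-1,T]$, whereas you handle $[-1,0]$ separately; both are fine.
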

\begin{proof}
Let $a\in Y$, $R\in\mathcal{R}$ be fixed. Then in the spirit of Cauchy's definition we can find that
\begin{multline*}
        \norm{u(\cdot;a,u_{0,1},R)- u(\cdot;a,u_{0,2},R)}_{C([-1,T],L_p(D))}
        \\
        \le \sup_{t\in[0,T]}\delta(t;a,u_{0,1}-u_{0,2},R)
        \\
        \le M_1\exp (M_2T) \norm{u_{0,1}-u_{0,2}}_{C([-1,0],L_p(D))}
\end{multline*}
for any initial conditions $u_{0,1}, u_{0,2}\in C([-1,0],L_p(D))$. The first inequality results from the  linearity of the problem $\eqref{main-eq}_{a}+\eqref{main-bc}_a$ and the second inequality follows from~\eqref{eq:Gronwall}. \qed
\end{proof}

\section{Continuous Dependence on Coefficients and Delay}
\label{sect:dependence-on-parameters}
In the present section we assume \eqref{a: boundary regularity}, \eqref{a: boundedness} and that $Y$ as in Definition~\ref{def:Y} is such that its flattening $Y_0$ satisfies \eqref{Ellipticity} and \eqref{a: compactness of Y }.  As in Section~\ref{sect:dependence_on_IC}, $1 < p < \infty$.

\begin{proposition}
\label{lm:continuity-wrt-parameters-1}
 For any $0 < T_1 \le T $, $R \in \mathcal{R}$ and $u_0 \in C([- 1, 0], L_p(D))$ the mapping
    \begin{equation*}
        \bigl[\, Y\ni a \mapsto u(\cdot; a, u_0,R)\!\!\restriction_{[T_1, T]}  \in C([T_1, T], L_p(D)) \,\bigr]
    \end{equation*}
    is continuous.
\end{proposition}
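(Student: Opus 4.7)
My plan is to combine the compactness statement of Theorem~\ref{thm:delay-compactness-X_2} with a passage to the limit in the mild equation, closed by a Gronwall--type uniqueness argument. Fix $T_1 \in (0, T]$, let $(a_m)_{m=1}^{\infty} \subset Y$ converge \WST to $a \in Y$, and write $u_m \vcentcolon= u(\cdot; a_m, u_0, R)$ and $u \vcentcolon= u(\cdot; a, u_0, R)$. By the Urysohn subsequence principle (applicable since $Y$ is metrizable), it suffices to extract from every subsequence of $(u_m)$ a further subsequence convergent to $u$ in $C([T_1, T], L_p(D))$.

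Pick such a subsequence. Applying Theorem~\ref{thm:delay-compactness-X_2} with $E = \{u_0\}$ together with a diagonal procedure over $T_k = 1/k$, pass to a further subsequence (still denoted $u_m$) and a function $\tilde{u} \colon [-1, T] \to L_p(D)$, defined by $\tilde{u}\!\restriction_{[-1,0]} \vcentcolon= u_0$ and $\tilde{u}(t) \vcentcolon= \lim_m u_m(t)$ for $t \in (0, T]$, such that $u_m \to \tilde{u}$ in $C([1/k, T], L_p(D))$ for every $k$. Then $u_m(\zeta - R(\zeta)) \to \tilde{u}(\zeta - R(\zeta))$ in $L_p(D)$ for a.e.\ $\zeta \in (0, T]$: on $\{\zeta - R(\zeta) > 0\}$ by the diagonal convergence, on $\{\zeta - R(\zeta) \le 0\}$ both sides equal $u_0(\zeta - R(\zeta))$. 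All norms are uniformly dominated by $\overline{M}\norm{u_0}_{C([-1,0], L_p(D))}$ via Proposition~\ref{prop: global mild L_p estim}.

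Next, I would pass to the limit in the mild equation at each fixed $t \in (0, T]$. The free term obeys $U_{\tilde{a}_m}(t, 0) u_0(0) \to U_{\tilde{a}}(t, 0) u_0(0)$ in $L_p(D)$ by Proposition~\ref{prop:higher_orders-skew_product-norm_continuity} (flattening is continuous, hence $\tilde{a}_m \to \tilde{a}$). For the convolution term, test against arbitrary $v \in L_{p'}(D)$ and use Proposition~\ref{prop:dual-Lp-q} to rewrite the pairing as
\begin{equation*}
    \int_0^t \!\! \int_D c_{1, a_m}(\zeta, x) \, \phi_m(\zeta, x) \, \dd x \, \dd \zeta, \qquad \phi_m(\zeta, x) \vcentcolon= u_m(\zeta - R(\zeta))[x] \cdot \bigl(U^{*}_{\tilde{a}_m}(\zeta, t) v\bigr)[x].
\end{equation*}
I would then verify $\phi_m \to \phi$ in $L_1((0, t) \times D)$, with $\phi$ the analog built from $\tilde{u}$ and $\tilde{a}$, by splitting
\begin{equation*}
    \phi_m - \phi = (u_m - \tilde{u})(\zeta - R(\zeta)) \cdot U^{*}_{\tilde{a}_m}(\zeta, t) v + \tilde{u}(\zeta - R(\zeta)) \cdot \bigl(U^{*}_{\tilde{a}_m}(\zeta, t) v - U^{*}_{\tilde{a}}(\zeta, t) v\bigr)
\end{equation*}
and controlling each piece via H\"older in $D$ and dominated convergence: the first integrand vanishes pointwise by the previous paragraph, the second by the adjoint analog of Proposition~\ref{prop:higher_orders-skew_product-norm_continuity}, and the uniform domination is supplied by Proposition~\ref{prop: global mild L_p estim} and Proposition~\ref{prop:higher_orders-skew_product-p-q-estimates-adjoint}. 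Combined with the norm bound $\norm{c_{1, a_m}}_{L_{\infty}} \le K$ (which controls $\int c_{1, a_m}(\phi_m - \phi)$) and the \WST convergence $c_{1, a_m} \to c_{1, a}$ (paired against the fixed $L_1$-function $\phi$), this gives convergence of the pairings. Uniqueness of weak limits then identifies the norm limit of the integral, so $\tilde{u}$ satisfies the mild equation for $a$ on $(0, T]$.

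Subtracting from the mild equation for $u$, the difference $w \vcentcolon= \tilde{u} - u$ vanishes on $[-1, 0]$ and satisfies $\norm{w(t)}_{L_p(D)} \le MKe^{\gamma T}\int_0^t \norm{w(\zeta - R(\zeta))}_{L_p(D)} \, \dd\zeta$ for $t \in (0, T]$. Setting $h(t) \vcentcolon= \sup_{\xi \in [-1, t]}\norm{w(\xi)}_{L_p(D)}$ and exploiting $\zeta - R(\zeta) \le \zeta$ yields $h(t) \le MKe^{\gamma T}\int_0^t h(\zeta)\, \dd\zeta$ with $h(0) = 0$, whence Gronwall's lemma forces $w \equiv 0$, i.e.\ $\tilde{u} = u$, and the subsequence principle closes the argument. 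The principal obstacle is the integral limit: \WST convergence of $c_{1, a_m}$ only interacts with \emph{fixed} $L_1$-test functions, so we must fabricate such a function from $u_m$ and the adjoint propagator and control the difference $\phi_m - \phi$ in $L_1((0, t) \times D)$ uniformly down to the singular endpoint $\zeta = t$ where $U^{*}_{\tilde{a}_m}(\zeta, t)$ degenerates to the identity.
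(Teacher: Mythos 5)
Your proof is correct and shares the paper's skeleton --- extract a subsequence via the compactness of Theorem~\ref{thm:delay-compactness-X_2}, pass to the limit in the mild equation at each $t$, and conclude by uniqueness --- but the decisive step, identifying the limit of the convolution term, is handled by a genuinely different route. The paper splits the difference of the integrals into three pieces and proves \emph{norm} convergence of each: the first two by dominated convergence for the Bochner integral, and the third, $J^{(3)}_m(t)=\int_0^t U_{\tilde a}(t,\zeta)\,(C^1_{a_m}(\zeta)-C^1_a(\zeta))\,\tilde u(\zeta-R(\zeta))\,\dd\zeta$, by combining weak convergence to zero with precompactness of $\{J^{(3)}_m(t)\}$ (Lemma~\ref{lm:compact-aux}) to upgrade weak to norm convergence. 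You avoid that upgrade altogether: since $u_m(t)\to\tilde u(t)$ and the free terms converge in norm, the integral terms already converge in norm to \emph{something}, and you only need to identify that limit weakly; you do so in one stroke by pushing all the $m$\nobreakdash-dependence (the delayed solution \emph{and} the adjoint propagator) into a test function $\phi_m$ and proving $\phi_m\to\phi$ in $L_1((0,t)\times D)$, so that the \WST convergence of $c_{1,m}$ only ever acts on the fixed limit $\phi$. This is leaner --- it dispenses with the three-term decomposition and with a second invocation of the compactness machinery --- at the price of needing the adjoint analogue of the norm-continuity Proposition~\ref{prop:higher_orders-skew_product-norm_continuity} together with the uniform adjoint bound of Proposition~\ref{prop:higher_orders-skew_product-p-q-estimates-adjoint} (which, as you correctly note, removes the apparent singularity at $\zeta=t$, since with $p=q$ the bound is simply $Me^{\gamma(t-\zeta)}$). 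Two further, harmless deviations: you re-derive uniqueness by a Gronwall estimate rather than citing Theorem~\ref{thm:mild_solution_existence}, which incidentally lets you skip the paper's explicit verification that $\tilde u$ is continuous at $t=0$; and you leave implicit the joint measurability of $\phi_m$ on $(0,t)\times D$, which the paper settles via Lemma~\ref{lm:Dunford-Schwartz} and which you should record to justify working in $L_1((0,t)\times D)$ and applying Fubini.
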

\begin{proof}
Fix $p$, $T_1$, $R$ and $u_0$ as in the Proposition.  Let $(a_{m})_{m=1}^{\infty} \subset Y$ converge to $a$. Put $u_m(\cdot)$ for $u(\cdot; a_m, u_0,R)$ and $u(\cdot)$ for $u(\cdot; a, u_0,R)$. It suffices to prove that there is a subsequence $(a_{m_k})_{k=1}^{\infty} \subset Y$ such that $u_{m_k}(\cdot)$ converges to $u(\cdot)$ on $[T_1,T]$ uniformly. By Theorem~\ref{thm:delay-compactness-X_2} via diagonal process, we can find a subsequence $u_{m_k}$ such that $u_{m_k}\!\!\restriction_{(0, T]}$ converge to some continuous $\hat{u} \colon (0, T] \to L_{p}(D)$ and the convergence is uniform on compact subsets of $(0, T]$. The function $\hat{u}$ is clearly bounded by Proposition~\ref{prop: global mild L_p estim}. Moreover, we extend the map $\hat{u}$ to the whole $[-1,T]$ by $u_0$ on $[-1,0]$, i.e., now
\begin{equation*}
\tilde{u}(t) \vcentcolon=\left\{\begin{array}{lll}
u_0(t) & \quad \text{for } t\in [-1,0] \\[1ex]
\lim\limits_{k \to \infty} u_{m_k}(t) & \quad \text{for } t\in (0,T].
\end{array}\right.
\end{equation*}
It remains to prove that $\tilde{u} = u$. In order not to overburden the notation we write $u_m$ instead of $u_{m_k}$.

Our first step is to show that, for each $t \in (0, T]$,
    \begin{align}
    \label{cont-1-Y}
       U_{\tilde{a}_{m}}(t, 0) u_{0}(0) & \to U_{\tilde{a}}(t, 0) u_{0}(0)
        \\
    \label{cont-2-Y}
        \int\limits_{0}^{t} U_{\tilde{a}_m}(t,\zeta)C^1_{a_m}(\zeta)u_m(\zeta-R(\zeta)) \, \mathrm{d}\zeta & \to \int\limits_{0}^{t} U_{\tilde{a}}(t,\zeta)C^1_{a}(\zeta)\tilde{u}(\zeta-R(\zeta)) \, \mathrm{d}\zeta.
    \end{align}
in the $L_{p}(D)$\nobreakdash-\hspace{0pt}norm as $m \to \infty$. The convergence in \eqref{cont-1-Y} is a consequence of Proposition~\ref{prop:higher_orders-skew_product-continuity}.  The convergence in \eqref{cont-2-Y} can be shown by showing the convergence of the difference
    \begin{equation}
    \label{J_1_2_3_Y}
        \begin{aligned}
             \int\limits_{0}^{t} U_{\tilde{a}_{m}}(t, \zeta) & C^1_{a_{m}}(\zeta)  u_{m}(\zeta-R(\zeta)) \, \dd\zeta  -  \int\limits_{0}^{t} U_{\tilde{a}}(t, \zeta) C^1_{a}(\zeta) \tilde{u}(\zeta-R(\zeta))\, \dd\zeta\\
            =& \int\limits_{0}^{t} (U_{\tilde{a}_{m}}(t, \zeta) - U_{\tilde{a}}(t, \zeta) ) C^1_{a_{m}}(\zeta) u_m(\zeta-R(\zeta)) \, \dd\zeta\\
            &+\int\limits_{0}^{t}  U_{\tilde{a}}(t, \zeta)  C^1_{a_{m}}(\zeta) (u_{m}(\zeta-R(\zeta))-\tilde{u}(\zeta-R(\zeta))) \, \dd\zeta\\
            &+\int\limits_{0}^{t}  U_{\tilde{a}}(t, \zeta)  (C^1_{a_{m}}(\zeta)-C^1_{a}(\zeta)) \tilde{u}(\zeta-R(\zeta)) \, \dd\zeta
        \end{aligned}
    \end{equation}
to zero.  Write $J^{(i)}_{m}(t)$, $i = 1, 2, 3$, for the $i$\nobreakdash-\hspace{0pt}th term on the right-hand side of~\eqref{J_1_2_3_Y}.  The convergence of $J^{(1)}_{m}(t)$ follows from the Lebesgue dominated convergence theorem for Bochner integral: since the integrand
  \begin{equation*}
      (0,t)\ni \zeta \mapsto (U_{\tilde{a}_{m}}(t, \zeta) - U_{\tilde{a}}(t, \zeta) ) C^1_{a_{m}}(\zeta) u_m(\zeta-R(\zeta)) \in L_p(D)
  \end{equation*}
is  $(\mathfrak{L}((0, t)), \mathfrak{B}(L_p(D)))$\nobreakdash-\hspace{0pt}measurable for all $m\in \NN$ (see Lemmas~\ref{lm:I_i_(i)} and \ref{lemma:v_m-boundednes}) and bounded uniformly (see Proposition~\ref{prop: global mild L_p estim}) in $m\in \NN$:
\begin{multline*}
            \norm{ (U_{\tilde{a}_{m}}(t, \zeta) - U_{\tilde{a}}(t, \zeta) ) C^1_{a_{m}}(\zeta) u_m(\zeta-R(\zeta)) }_{L_p(D)} \\
            \le  2Me^{\gamma T}K\overline{M}\norm{u_0}_{C([-1,0],L_p(D))}
\end{multline*}
it suffices to check that for a.e.\ $\zeta\in (0,t)$ the integrand converges to zero, which follows from the estimate
\begin{multline*}
            \norm{ (U_{\tilde{a}_{m}}(t, \zeta) - U_{\tilde{a}}(t, \zeta) ) C^1_{a_{m}}(\zeta) u_m(\zeta-R(\zeta)) }_{L_p(D)} \\
            \le    \norm{ (U_{\tilde{a}_{m}}(t, \zeta) - U_{\tilde{a}}(t, \zeta) ) }_{\mathcal{L}(L_p(D))}K \overline{M}\norm{u_0}_{C([-1,0],L_p(D))}
\end{multline*}
and Proposition~\ref{prop:higher_orders-skew_product-norm_continuity}.

In order to prove $J_m^{(2)}(t)\to 0$ as $m\to \infty$ we proceed similarly. We see that the mapping
    \begin{equation*}
       \big[\, (0,t)\ni \zeta\mapsto U_{\tilde{a}}(t, \zeta)   C^1_{a_{m}}(\zeta) (u_{m}(\zeta-R(\zeta))-\tilde{u}(\zeta-R(\zeta))) \in L_p(D)\, \big]
    \end{equation*}
is $(\mathfrak{L}((0, t)), \mathfrak{B}(L_p(D)))$\nobreakdash-\hspace{0pt}measurable for all $m \in \NN$, as a consequence of Lemmas~\ref{lm:I_i_(i)}  and \ref{lemma:v_m-boundednes}, and bounded uniformly in $m\in \NN$, since, by Proposition~\ref{prop:higher_orders-skew_product-p-q-estimates_2}, Lemma~\ref{Boundednes of Multiplication Operator} and Proposition~\ref{prop: global mild L_p estim},
\begin{multline*}
            \norm{U_{\tilde{a}}(t, \zeta)   C^1_{a_{m}}(\zeta) (u_{m}(\zeta-R(\zeta))-\tilde{u}(\zeta-R(\zeta))) }_{L_p(D)} \\
            \le  2MKe^{\gamma T}\overline{M}\norm{u_0}_{C([-1,0],L_p(D))}.
\end{multline*}
Further, the convergence, for a.e.\ $\zeta\in (0,t)$,
        \begin{equation*}
            U_{\tilde{a}}(t, \zeta)   C^1_{a_{m}}(\zeta) (u_{m}(\zeta-R(\zeta))-\tilde{u}(\zeta-R(\zeta))) \to 0
        \end{equation*}
in $L_p(D)$ is due to the pointwise convergence of $u_m$ to $\tilde{u}$ on $[-1,T]$ and the estimate (by Proposition~\ref{prop:higher_orders-skew_product-p-q-estimates_2} and Lemma~\ref{Boundednes of Multiplication Operator})
 \begin{multline*}
             \norm{U_{\tilde{a}}(t, \zeta)   C^1_{a_{m}}(\zeta) (u_{m}(\zeta-R(\zeta))-\tilde{u}(\zeta-R(\zeta))) }_{L_p(D)}\\ \le
              MKe^{\gamma T} \norm{ u_{m}(\zeta-R(\zeta))-\tilde{u}(\zeta-R(\zeta)) }_{L_p(D)}.
        \end{multline*}

The convergence of $J^{(3)}_{m}(t)$ follows from the facts that the set
    \begin{equation*}
        \{\, J^{(3)}_{m}(t) : m \in \NN \,\}
    \end{equation*}
is precompact in $L_{p}(D)$ (see Lemma~\ref{lm:compact-aux}, Lemma~\ref{lemma:v_m-boundednes}, Proposition~\ref{prop: global mild L_p estim})
and that $J^{(3)}_{m}(t)$  converge weakly to zero, i.e.,
    \begin{align}
    \label{cont-3-3_Y}
        \langle J^{(3)}_{m}(t), v \rangle & \to 0 \text{ as } m \to \infty,
    \end{align}
for any $v \in L_{{p'}}(D)$, where $\langle \cdot, \cdot \rangle$ stands for the duality pairing between $L_{p}(D)$ and $L_{p'}(D)$.  By the Hille theorem (\cite[Thm.~II.2.6]{DiUhl}) and Proposition~\ref{prop:dual-Lp-q},
        \begin{multline*}
        \langle J^{(3)}_{m}(t), v \rangle =  \int_{0}^{t} \langle U_{\tilde{a}}(t, \zeta)  (C^1_{a_{m}}(\zeta)-C^1_{a}(\zeta)) \tilde{u}(\zeta-R(\zeta)) , v \rangle \, \dd\zeta
        \\
        = \int_{0}^{t} \langle  (C^1_{a_{m}}(\zeta)-C^1_{a}(\zeta)) \tilde{u}(\zeta-R(\zeta)) , U^{*}_{\tilde{a}}(t, \zeta)v  \rangle \, \dd\zeta.
    \end{multline*}
Now we need to use a subtler approach based on the convergence $c_{1,m}$ to $c_{1}$ in the \WST topology of $L_{\infty}((0,t) \times D)$. First note that the mappings
\begin{gather*}
        \big[\, (0,t)\ni \zeta\mapsto \tilde{u}(\zeta-R(\zeta)) \in L_p(D) \,\big] \quad \& \quad
    \big[\, (0,t)\ni\zeta \mapsto U^{*}_{\tilde{a}}(t, \zeta)v \in L_{p'}(D) \,\big]
\end{gather*}
belong to $L_{\infty}((0,t),L_p(D))$ and $L_{\infty}((0,t),L_{p'}(D))$ respectively. Therefore the mapping (the product of the above maps)
\begin{equation*}
     \big[\, (0,t)\times D \ni (\zeta,x) \mapsto \tilde{u}(\zeta-R(\zeta))[x] (U^{*}_{\tilde{a}}(t, \zeta)v)[x] \in\RR \,\big]
\end{equation*}
belong to $L_{1}((0,t)\times D)$ see~Lemma~\ref{DS_a}. It suffices now to note that from Fubini's theorem we have
\begin{multline*}
               \int_0^t  \langle  (C^1_{a_{m}}(\zeta)-C^1_{a}(\zeta))\tilde{u}(\zeta-R(\zeta)) , U^{*}_{\tilde{a}}(t, \zeta)v \rangle\, \mathrm{d}\zeta \\= \int_{0}^t \!\! \int_{D} (c_{1,m}(\zeta, x) - c_1(\zeta, x)) \, \tilde{u}(\zeta-R(\zeta))[x] \, (U^*_{\tilde{a}}(\zeta, t) v)[x] \, \mathrm{d}x  \, \mathrm{d}\zeta,
\end{multline*}
so the integral tends to zero as $m\to \infty$.

We have thus proved that
    \begin{align*}
        \tilde{u}(t) = {} &  \U_{\tilde{a}} (t,0)u_0(0) +\int_{0}^{t} \U_{\tilde{a}} (t,\zeta) C^{1}_{a}(\zeta)\tilde{u}(\zeta-R(\zeta)) \,\dd\zeta, \quad t \in [0, T].
    \end{align*}

Now we prove the continuity of the extension $\tilde{u}$. Note that the only point where it can fail is $t=0$. However, this is not the case since the mappings
    \begin{gather*}   \Big[\, [0,T]\ni  t\mapsto    \U_{\tilde{a}} (t,0)u_0(0) \in L_p(D) \,\Big] \\
              \Big[\, [0,T]\ni  t\mapsto   \int_{0}^{t} \U_{\tilde{a}} (t,\zeta)  C^{1}_{a}(\zeta)\tilde{u}(\zeta-R(\zeta)) \,\dd\zeta\in L_p(D) \,\Big]
    \end{gather*}
    are continuous (see Lemmas~\ref{prop:higher_orders-skew_product-p_ii} and~\ref{lm:I_i_(ii)}), so the mapping $\tilde{u}$ is continuous on the whole $[-1,T]$.  Also, $\tilde{u} = u_0$ on $[-1,0]$, hence $\tilde{u}$ is in~fact the mild solution of $\eqref{main-eq}_{\tilde{a}}+\eqref{main-bc}_{\tilde{a}}$, therefore, by uniqueness, $\tilde{u}(t) = u(t;\tilde{a},u_0,R)$ for any $t \in [-1, T]$.  \qed
\end{proof}

\begin{proposition}
\label{prop:continuity-wrt-parameters-tau}
Assume additionally~\eqref{a:only_c_continuous}. For any $u_0 \in C([- 1, 0], L_p(D))$ and $\mathcal{R}_0\subset\mathcal{R}$ satisfying the assumption~\eqref{a:tau_pointwise_convergence} the mapping
    \begin{equation*}
        \bigl[\, Y\times \mathcal{R}_0 \ni (a,R) \mapsto u(\cdot; a, u_0,R)\!\!\restriction_{[0, T]}  \in C([0, T], L_p(D)) \,\bigr]
    \end{equation*}
    is continuous.
\end{proposition}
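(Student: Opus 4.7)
The plan is to imitate the structure of Proposition~\ref{lm:continuity-wrt-parameters-1}, with two new ingredients: the singleton assumption \eqref{a:only_c_continuous} which makes $U_{\tilde a}(t,0)u_0(0)$ independent of the parameter (and enables continuity all the way down to $t=0$), and the delay-dependence hypothesis \eqref{a:tau_pointwise_convergence}. Let $(a_m, R_m) \to (a, R)$ in $Y \times \mathcal{R}_0$ in the \WST topology, and set $u_m \vcentcolon= u(\cdot; a_m, u_0, R_m)$, $u \vcentcolon= u(\cdot; a, u_0, R)$. By Theorem~\ref{c:delay-compactness-up_to_0} the set $\{u_m\}$ is precompact in $C([0, T], L_p(D))$; since $u_m\restriction_{[-1,0]} = u_0$ for all $m$, I may assume along a subsequence that $u_m \to \tilde u$ uniformly on $[-1, T]$ for some $\tilde u \in C([-1,T], L_p(D))$ with $\tilde u\restriction_{[-1,0]} = u_0$, and a standard subsubsequence argument reduces the claim to proving $\tilde u = u$.

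I would pass to the limit in the mild integral equation for $u_m$. Using \eqref{a:only_c_continuous} to replace $U_{\tilde a_m}$ by $U_{\tilde a}$, the task reduces to showing, for each $t \in [0, T]$,
\begin{equation*}
\int_0^t U_{\tilde a}(t, \zeta) C^1_{a_m}(\zeta) u_m(\zeta - R_m(\zeta)) \, \dd\zeta \; \longrightarrow \; \int_0^t U_{\tilde a}(t, \zeta) C^1_a(\zeta) \tilde u(\zeta - R(\zeta)) \, \dd\zeta
\end{equation*}
in $L_p(D)$. Split the difference as $K_m^{(1)}(t) + K_m^{(2)}(t)$, where $K_m^{(1)}$ has the operator $C^1_{a_m}$ acting on $u_m(\zeta - R_m(\zeta)) - \tilde u(\zeta - R(\zeta))$, and $K_m^{(2)}(t) \vcentcolon= \int_0^t U_{\tilde a}(t, \zeta)[C^1_{a_m}(\zeta) - C^1_a(\zeta)]\tilde u(\zeta - R(\zeta))\,\dd\zeta$. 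For $K_m^{(1)}$, add and subtract $\tilde u(\zeta - R_m(\zeta))$: the piece $u_m(\zeta - R_m(\zeta)) - \tilde u(\zeta - R_m(\zeta))$ vanishes uniformly in $\zeta$ by uniform convergence of $u_m$, and the piece $\tilde u(\zeta - R_m(\zeta)) - \tilde u(\zeta - R(\zeta))$ tends to $0$ in $L_p(D)$ for a.e.\ $\zeta$ by virtue of \eqref{a:tau_pointwise_convergence} combined with uniform continuity of $\tilde u \colon [-1, T] \to L_p(D)$; dominated convergence, with dominator furnished by Proposition~\ref{prop:higher_orders-skew_product-p-q-estimates_2}, Lemma~\ref{Boundednes of Multiplication Operator} and Proposition~\ref{prop: global mild L_p estim}, then yields $K_m^{(1)}(t) \to 0$ in $L_p(D)$.

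For $K_m^{(2)}(t)$ I would reuse verbatim the treatment of the $J_m^{(3)}$ term in the proof of Proposition~\ref{lm:continuity-wrt-parameters-1}: Lemma~\ref{lm:compact-aux} gives precompactness of $\{K_m^{(2)}(t)\}$ in $L_p(D)$, while Fubini combined with the Hille theorem, Proposition~\ref{prop:dual-Lp-q} and the \WST convergence $c_{1,m} \to c_1$ in $L_\infty((0,T) \times D)$ forces $\langle K_m^{(2)}(t), v\rangle \to 0$ for every $v \in L_{p'}(D)$; precompactness upgrades this weak convergence to norm convergence. This identifies $\tilde u$ as a mild solution for parameters $(a, R)$ with initial datum $u_0$, so Theorem~\ref{thm:mild_solution_existence} gives $\tilde u = u$, and the subsequence principle yields the desired convergence in $C([0, T], L_p(D))$. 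The main obstacle I anticipate is the $K_m^{(2)}$ term, as norm convergence cannot be obtained by direct estimation under only \WST convergence of $c_{1,m}$; this is also exactly where the singleton assumption \eqref{a:only_c_continuous} is indispensable, since it is needed to activate Theorem~\ref{c:delay-compactness-up_to_0} and thus obtain precompactness of $\{u_m\}$ in $C([0, T], L_p(D))$ down to $t=0$, rather than only on $[T_1, T]$ as in Proposition~\ref{lm:continuity-wrt-parameters-1}.
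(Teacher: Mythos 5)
Your proposal is correct and follows essentially the same route as the paper's (sketched) proof: precompactness up to $t=0$ via Theorem~\ref{c:delay-compactness-up_to_0} under \eqref{a:only_c_continuous}, the same two-term splitting into $K^{(1)}_m$ and $K^{(2)}_m$, dominated convergence for the first term using \eqref{a:tau_pointwise_convergence}, and the verbatim reuse of the $J^{(3)}_m$ argument for the second. Your explicit add-and-subtract of $\tilde u(\zeta - R_m(\zeta))$ in the $K^{(1)}_m$ term merely spells out a step the paper leaves implicit.
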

\begin{proof}[Sketch of proof]
Fix $p$, $u_0$ and $\mathcal{R}_0$. Let $(a_{m})_{m=1}^{\infty} \subset Y$ converge to $a$ and $(R_m)_{m=1}^{\infty}\subset \mathcal{R}_0$ converge to $R$, and put $u_m(\cdot)$ for $u(\cdot; a_m, u_0,R_m)$ and $u(\cdot)$ for $u(\cdot; a, u_0,R)$. We will proceed as in Proposition~\ref{lm:continuity-wrt-parameters-1}.  In~particular, $\tilde{u}$ has the same meaning.  However, we have in~fact more:  as we assume~\eqref{a:only_c_continuous}, we can apply Theorem~\ref{c:delay-compactness-up_to_0} to show that $u_m$ converge to $\hat{u}$ uniformly on $[0, T]$, from which it follows in~particular that $\tilde{u}$ is continuous on the whole of $[-1, T]$.

It follows again from \eqref{a:only_c_continuous} that $\tilde{a}_m = \tilde{a}$ for all $m \in \NN$, so
\begin{equation*}
    U_{\tilde{a}_{m}}(t, 0) u_{0}(0) \to U_{\tilde{a}}(t, 0) u_{0}(0)
\end{equation*}
holds trivially.  We start by showing that, for each $t \in [0, T]$,
    \begin{equation}
    \label{cont-2+DA6,7}
        \int\limits_{0}^{t} U_{\tilde{a}}(t,\zeta)C^1_{a_m}(\zeta)u_m(\zeta-R_m(\zeta)) \, \mathrm{d}\zeta \to \int\limits_{0}^{t} U_{\tilde{a}}(t,\zeta)C^1_{a}(\zeta)\tilde{u}(\zeta-R(\zeta)) \, \mathrm{d}\zeta.
    \end{equation}
in the $L_{p}(D)$\nobreakdash-\hspace{0pt}norm as $m \to \infty$. The above convergence can be proved by showing convergence of the terms
    \begin{equation}
    \label{J_1_2+DA6,7}
        \begin{aligned}
             \int\limits_{0}^{t} U_{\tilde{a}}(t, \zeta) & C^1_{a_{m}}(\zeta)  u_m(\zeta-R_m(\zeta)) \, \dd\zeta  -  \int\limits_{0}^{t} U_{\tilde{a}}(t, \zeta) C^1_{a}(\zeta) \tilde{u}(\zeta-R(\zeta))\, \dd\zeta\\
            &=\int\limits_{0}^{t}  U_{\tilde{a}}(t, \zeta)  C^1_{a_{m}}(\zeta) (u_m(\zeta-R_m(\zeta))-\tilde{u}(\zeta-R(\zeta))) \, \dd\zeta\\
            &+\int\limits_{0}^{t}  U_{\tilde{a}}(t, \zeta)  (C^1_{a_{m}}(\zeta)-C^1_{a}(\zeta)) \tilde{u}(\zeta-R(\zeta)) \, \dd\zeta.
        \end{aligned}
    \end{equation}
to zero.  Write $K^{(i)}_{m}(t)$, $i = 1, 2$, for the $i$\nobreakdash-\hspace{0pt}th term on the right-hand side of~\eqref{J_1_2+DA6,7}.

Regarding the convergence of $K^{(1)}_{m}(t)$ to zero, we show the $(\mathfrak{L}((0, t)), \mathfrak{B}(L_p(D)))$\nobreakdash-\hspace{0pt}measurability, for all $m \in \NN$, of the integrand
\begin{equation*}
    \bigl[\, (0,t)\ni \zeta\mapsto U_{\tilde{a}}(t, \zeta)   C^1_{a_{m}}(\zeta) (u_m(\zeta-R_m(\zeta))-\tilde{u}(\zeta-R(\zeta))) \in L_p(D)\, \bigr]
\end{equation*}
in the same way as in the proof of the convergence of $J^{(2)}_{m}(t)$ in  Proposition~\ref{lm:continuity-wrt-parameters-1}.  The fact that for a.e.\ $\zeta\in (0,T)$ we have that
        \begin{equation*}
            U_{\tilde{a}}(t, \zeta)   C^1_{a_{m}}(\zeta) (u_m(\zeta-R_m(\zeta))-\tilde{u}(\zeta-R(\zeta))) \to 0
        \end{equation*}
in $L_p(D)$ is due, in view of ~\eqref{a:tau_pointwise_convergence}, to the uniform convergence of $u_m$ to $\tilde{u}$ on $[-1, T]$ together with the estimate
\begin{multline*}
             \norm{U_{\tilde{a}}(t, \zeta)   C^1_{a_{m}}(\zeta) (u_m(\zeta-R_m(\zeta))-\tilde{u}(\zeta-R(\zeta))) }_{L_p(D)}\\ \le
              MKe^{\gamma T} \norm{ u_m(\zeta-R_m(\zeta))-\tilde{u}(\zeta-R(\zeta)) }_{L_p(D)}.
\end{multline*}
The proof of the convergence of $K^{(2)}_{m}(t)$ to zero is just a copy, word~for~word, of the proof of the convergence of $J^{(3)}_{m}(t)$ in Proposition~\ref{lm:continuity-wrt-parameters-1}. \qed
\end{proof}

\begin{theorem}~
\label{th:joint_continouity}
\begin{enumerate}[label=\textup{(\roman*)},ref=\textup{(\roman*)}]
\item\label{th:joint_continouity_i}  For any $0 < T_1 \le T$ and $R \in \mathcal{R}$ the mapping
    \begin{multline*}
        \Bigl[\, Y \times C([- 1, 0], L_p(D)) \ni (a, u_0)
        \\
        \mapsto u(\cdot; a, u_0,R)\!\!\restriction_{[T_1, T]} \in C([T_1, T], L_{p}(D)) \,\Bigr]
    \end{multline*}
    is continuous.
\item\label{th:joint_continouity_ii}
Under \eqref{a:only_c_continuous}, if $\mathcal{R}_0\subset\mathcal{R}$ is a subset such that the assumption~\eqref{a:tau_pointwise_convergence} holds then the mapping
    \begin{multline*}
        \Bigl[\, Y \times C([- 1, 0], L_p(D))\times \mathcal{R}_0 \ni (a, u_0, R)
        \\
        \mapsto u(\cdot; a, u_0,R)\!\!\restriction_{[0, T]} \in C([0, T], L_{p}(D)) \,\Bigr]
    \end{multline*}
    is continuous.
\end{enumerate}
\end{theorem}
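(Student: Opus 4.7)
The plan is to reduce both statements to the two pieces of machinery that are already in hand: the uniform Lipschitz dependence on the initial data supplied by Proposition~\ref{prop: global mild L_p estim} (which exploits the linearity of \eqref{main-eq}$+$\eqref{main-bc} in $u_0$), and the continuity in the remaining parameters (Proposition~\ref{lm:continuity-wrt-parameters-1} for part~\ref{th:joint_continouity_i}, Proposition~\ref{prop:continuity-wrt-parameters-tau} for part~\ref{th:joint_continouity_ii}). The key observation is that because the map $u_0 \mapsto u(\cdot;a,u_0,R)$ is linear for each fixed $(a,R)$, the triangle inequality
\begin{equation*}
    u(\cdot;a_m,u_{0,m},R_m) - u(\cdot;a,u_0,R) = u(\cdot;a_m,u_{0,m}-u_0,R_m) + \bigl[u(\cdot;a_m,u_0,R_m) - u(\cdot;a,u_0,R)\bigr]
\end{equation*}
decouples the initial-condition variation from the variation in $(a,R)$.

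For part~\ref{th:joint_continouity_i}, fix $R\in\mathcal{R}$ and $T_1\in(0,T]$, and let $(a_m,u_{0,m})\to(a,u_0)$ in $Y\times C([-1,0],L_p(D))$. Using the decomposition above with $R_m\equiv R$, the first term is controlled uniformly in $m$ on the whole interval $[-1,T]$: Proposition~\ref{prop: global mild L_p estim} yields
\begin{equation*}
    \norm{u(\cdot;a_m,u_{0,m}-u_0,R)}_{C([-1,T],L_p(D))} \le \overline{M}\,\norm{u_{0,m}-u_0}_{C([-1,0],L_p(D))} \longrightarrow 0,
\end{equation*}
so in particular the supremum over $[T_1,T]$ vanishes. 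The second bracket converges to $0$ in $C([T_1,T],L_p(D))$ by Proposition~\ref{lm:continuity-wrt-parameters-1} applied to the fixed initial datum $u_0$ and the sequence $(a_m)$.

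For part~\ref{th:joint_continouity_ii}, assume \eqref{a:only_c_continuous} and let $\mathcal{R}_0$ satisfy~\eqref{a:tau_pointwise_convergence}. Let $(a_m,u_{0,m},R_m)\to(a,u_0,R)$ in $Y\times C([-1,0],L_p(D))\times\mathcal{R}_0$. The same decomposition works, but now the first term must be uniformly small regardless of the delay: this is again guaranteed by Proposition~\ref{prop: global mild L_p estim}, whose constant $\overline{M}$ is uniform in $a\in Y$ and $R\in\mathcal{R}$. The second term tends to $0$ in $C([0,T],L_p(D))$ by Proposition~\ref{prop:continuity-wrt-parameters-tau} applied to the fixed datum $u_0$ and the sequence $(a_m,R_m)$.

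There is no real obstacle here: once linearity in $u_0$ and the uniform bound of Proposition~\ref{prop: global mild L_p estim} are in place, the joint continuity statement is a formal consequence of the previously established \emph{separate} continuity results. The only point that deserves care is that the estimate from Proposition~\ref{prop: global mild L_p estim}, which is stated pointwise for $t\in[0,T]$, automatically extends to a bound in the $C([-1,T],L_p(D))$-norm (on $[-1,0]$ the solution equals $u_{0,m}-u_0$, whose $L_p(D)$-norm is dominated by $\norm{u_{0,m}-u_0}_{C([-1,0],L_p(D))}$); this is what lets us absorb the initial-condition error uniformly on $[T_1,T]$ in part~\ref{th:joint_continouity_i} and on $[0,T]$ in part~\ref{th:joint_continouity_ii}. \qed
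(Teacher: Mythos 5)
Your proposal is correct and follows essentially the same route as the paper: the same decomposition via linearity in the initial datum (the paper writes it as a triangle inequality through the intermediate term $u(\cdot;a_m,u_0,R)$, which is the same thing), with the first piece absorbed by the uniform bound of Proposition~\ref{prop: global mild L_p estim} and the second handled by Proposition~\ref{lm:continuity-wrt-parameters-1} for part~\ref{th:joint_continouity_i} and Proposition~\ref{prop:continuity-wrt-parameters-tau} for part~\ref{th:joint_continouity_ii}. Your remark that the pointwise estimate extends to the $C$-norm because the solution coincides with the initial datum on $[-1,0]$ is a small but accurate clarification of what the paper leaves implicit.
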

\begin{proof}
Fix $1 < p < \infty$. We start by proving~\ref{th:joint_continouity_i}, so fix also $T_1$, $R$ as in the statement.  Let a sequence $(a_m)_{m=1}^{\infty}\subset Y$ converge to $a \in Y$ and $(u_{0,m})_{m=1}^{\infty}\subset C([-1,0],L_p(D))$ converge to $u_0\in C([-1,0],L_p(D))$. The main idea of the proof is based on the estimation
\begin{equation}
\label{eq:joint_continouity}
    \begin{multlined}
         \norm{u(\cdot; a_m, u_{0,m}, R){\restriction}_{[T_1, T]}  - u(\cdot; a, u_{0}, R){\restriction}_{[T_1, T]} }_{C([T_1, T], L_{p}(D))}
        \\
        \quad \le \norm{ u(\cdot; a_m, u_{0,m}, R){\restriction}_{[T_1, T]} - u(\cdot; a_m, u_{0}, R) {\restriction}_{[T_1, T]}}_{C([T_1, T], L_{p}(D))}
        \\
         + \norm{u(\cdot; a_m, u_0,R){\restriction}_{[T_1, T]} - u(\cdot; a, u_0,R){\restriction}_{[T_1, T]}}_{C([T_1, T], L_{p}(D))}.
    \end{multlined}
\end{equation}
\noindent
Proposition~\ref{prop: global mild L_p estim} implies
\begin{multline*}
     \norm{ u(\cdot; a_m, u_{0,m}, R){\restriction}_{[T_1, T]} - u(\cdot; a_m, u_{0}, R) {\restriction}_{[T_1, T]}}_{C([T_1, T], L_{p}(D))} \\ \le \overline{M}\norm{u_{0,m}-u_0}_{C([-1,0],L_p(D))}.
\end{multline*}
Therefore the first part of the right-hand side of~\eqref{eq:joint_continouity} converges to zero as $m \to \infty$. The second part of~\eqref{eq:joint_continouity} converges to zero by Proposition~\ref{lm:continuity-wrt-parameters-1}.  Item~\ref{th:joint_continouity_ii} can be proved similarly.  So, assume additionally \eqref{a:only_c_continuous} and, instead of fixing the delay $R\in \mathcal{R}$ take a sequence $(R_m)_{m=1}^{\infty} \subset \mathcal{R}_0$ convergent to some $R\in \mathcal{R}_0$.  By similar estimation,
\begin{equation}
\label{eq:joint_continouity_R}
    \begin{multlined}
    \norm{u(\cdot; a_m, u_{0,m}, R_m){\restriction}_{[0, T]} - u(\cdot; a, u_{0}, R){\restriction}_{[0, T]} }_{C([0, T], L_{p}(D))}\\
        \quad \le  \norm{ u(\cdot; a_m, u_{0,m}, R_m){\restriction}_{[0, T]} - u(\cdot; a_m, u_{0}, R_m) {\restriction}_{[0, T]}}_{C([0, T], L_{p}(D))}
        \\
       + \norm{u(\cdot; a_m, u_0,R_m){\restriction}_{[0, T]} - u(\cdot; a, u_0,R){\restriction}_{[0, T]}}_{C([0, T], L_{p}(D))}
    \end{multlined}
\end{equation}
together with Propositions~\ref{prop: global mild L_p estim} and \ref{prop:continuity-wrt-parameters-tau} concludes the proof.   \qed
\end{proof}

\end{document}